\def\MR#1{}
\pgfplotsset{compat=1.17}
\def\noexternalize{yes}
\tikzset{external/up to date check = md5}
\crefname{subsection}{\S\!}{\S\!}
\Crefname{subsection}{Section}{Sections}
\crefname{section}{Section}{Sections}
\crefname{subappendix}{\S\!}{\S\!}
\Crefname{subappendix}{Section}{Sections}
\crefname{equation}{}{}
\Crefname{equation}{}{}
\crefname{thm}{Theorem}{Theorems}
\DeclareSymbolFont{symbols2}{LS1}{stixfrak}{m}{n}
\DeclareMathSymbol{\typecolon}{\mathbin}{symbols2}{"25}
\newcommand{\bilin}[2]{\hspace{.5mm}\kappa\hspace{-.5mm}\bigl( #1|#2\bigr)}
\newcommand{\pairing}[2]{\left< #1 \mid #2 \right>}
\theoremstyle{plain}
\newtheorem{thm}{Theorem}
\newtheorem{lem}[thm]{Lemma}
\newtheorem{prop}[thm]{Proposition}
\newtheorem{cor}[thm]{Corollary}
\newtheorem*{prop*}{Proposition}
\theoremstyle{definition}
\newtheorem{defn}[thm]{Definition}
\theoremstyle{remark}
\newenvironment{rem}
  {\pushQED{\qed}\remarkx}
  {\popQED\endremarkx}
\newtheorem*{rem*}{Remark}
\newtheorem*{ack}{Acknowledgements}
\newcommand{\be}{\begin{equation}}    
\newcommand{\ee}{\end{equation}}    
\newcommand{\beu}{\begin{equation*}}    
\newcommand{\eeu}{\end{equation*}}    
\newcommand{\bea}{\begin{eqnarray}}    
\newcommand{\eea}{\end{eqnarray}}    
\newcommand{\beaa}{\begin{eqnarray*}}    
\newcommand{\eeaa}{\end{eqnarray*}}    
\newcommand{\bmx}{\begin{pmatrix}}    
\newcommand{\emx}{\end{pmatrix}}
\newcommand{\ol}{\overline}    
\newcommand{\del}{\partial}    
\newcommand{\g}{{\mathfrak g}}
\renewcommand{\a}{{\mathfrak a}}
\renewcommand{\b}{{\mathfrak b}}
\newcommand{\gh}{{\widehat \g}}
\newcommand{\mc}{\mathcal}
\newcommand{\half}{\frac{1}{2}}
\newcommand{\nn}{\nonumber}
\newcommand{\8}{{\infty}}
\newcommand{\ZZ}{{\mathbb Z}}
\newcommand{\CC}{{\mathbb C}}
\newcommand{\AAA}{{\mathbf A}}
\renewcommand{\AA}{{\mathbb A}}
\newcommand{\id}{{\textup{id}}}    
\newcommand{\wh}{\widehat}
\newcommand{\Qc}[2]{\pi_i \circ Q \circ \pi_j}
\newcommand{\upby}[1]{\hspace{0mm}\smash{\mathsf s}^{#1}\hspace{-.25mm}}
\newcommand{\down}{\hspace{.0mm}\smash{\mathsf s}^{-1}\hspace{-.1mm}}
\newcommand{\downby}[1]{\hspace{0mm}\smash{\mathsf s}^{-#1}\hspace{-.25mm}}
\newcommand{\A}{\mathcal A}
\newcommand{\goi}[2]{=}    
\newcommand{\Hom}{\mathrm{Hom}}
\newcommand{\Homcont}{\mathrm{Hom}^{\mathrm{cts}}}
\newcommand{\qisom}{\simeq}
\newcommand{\btp}{\begin{tikzpicture}[baseline=0pt,scale=0.9,line width=0.25pt]}    
\newcommand{\etp}{\end{tikzpicture}}
\newcommand{\pt}{\mathrm{pt.}}
\newcommand{\wt}{\widetilde}
\DeclareMathOperator{\res}{res}
\DeclareMathOperator{\Spec}{Spec}
\DeclareMathOperator{\gr}{gr}
\renewcommand{\O}{\mc O}
\newcommand{\ox}{\mathbin\otimes}
\newcommand{\bul}{\bullet}
\newcommand{\invlim}{\varprojlim}
\newcommand{\directlim}{\varinjlim}
\newcommand{\into}{\hookrightarrow}
\newcommand{\op}{\mathrm{op}}
\newcommand{\F}{\mc F}
\DeclareMathOperator{\Sym}{Sym}
\newcommand{\C}{\mathcal C}
\newcommand{\extp}{\@ifnextchar^\@extp{\@extp^{\,}}}
\def\@extp^#1{\mathop{\bigwedge\nolimits^{\!#1}}}
\newcommand{\hextp}{\@ifnextchar^\@hextp{\@hextp^{\,}}}
\def\@hextp^#1{\mathop{\wh{\bigwedge\nolimits^{\!#1}}}}
\newcommand{\catname}[1]{\mathbf{#1}}
\newcommand{\Set}{\catname{Set}}
\newcommand{\CAlg}{\catname{CAlg}}
\newcommand{\LieAlg}{\catname{LieAlg}}
\newcommand{\dgCAlg}{\catname{dgCAlg}}
\newcommand{\dgVect}{\catname{dgVect}}
\newcommand{\grVect}{\catname{grVect}}
\newcommand{\dgLie}{\catname{dgLieAlg}}
\newcommand{\Top}{\catname{Top}}
\newcommand{\Open}{\catname{Open}}
\newcommand{\CCh}{\catname{CCh}}
\newcommand{\dd}{\mathrm{d}}
\newcommand{\Disc}{\mathrm{Disc}}
\newcommand{\Discp}{\mathrm{Disc}^\times}
\DeclareMathOperator{\Tot}{Tot}
\DeclareMathOperator{\Cone}{Cone}
\DeclareMathOperator{\Cocone}{Cocone}
\DeclareMathOperator{\hoker}{hoker}
\newcommand{\ww}{\mathbf w}
\newcommand{\zz}{\mathbf z}
\DeclareMathOperator{\Th}{Th}
\newcommand{\Deltas}{\Delta}
\newcommand{\emb}{\mathrm{emb}}
\newcommand{\sca}{{\Pi}}
\newcommand{\dth}{\dd_{\Th}}
\newcommand{\Pre}[1]{{\Deltas\!\downarrow\! #1}}
\newcommand{\Rect}{\mathrm{Rect}_2}
\newcommand{\Rectn}{\mathrm{Rect}_2(N)}
\newcommand{\DD}{\mathrm{PDisc}_2}
\newcommand{\DDx}{\DD^\times}
\newcommand{\DDp}[1]{\DD(#1)}
\newcommand{\DDpx}[1]{\DDx(#1)}
\newcommand{\ADpx}[1]{A_{\DDpx #1}}
\newcommand{\ARN}{A_{\Rectn}}
\newcommand{\gDp}[1]{\g_{\DDp #1}}
\newcommand{\gDpx}[1]{\g_{\DDpx #1}}
\newcommand{\gglob}{\g_{\mathrm{Global}}}
\newcommand{\gN}{\gglob}
\newcommand{\gloc}{\g_{\mathrm{PDiscs}}}
\newcommand{\glocx}{\g_{\mathrm{PDiscs}^\times}}
\newcommand{\galoc}{\wt\gloc}
\newcommand{\galocx}{\wt\glocx}
\newcommand{\AU}{A_U}
\newcommand{\AD}{A_{\DD}}
\newcommand{\ADx}{A_{\DDx}}
\newcommand{\gD}{\g_{\DD}}
\newcommand{\gDx}{\g_{\DDx}}
\newcommand{\CAlgemb}{\CAlg^\emb}
\newcommand{\wop}{a}
\newcommand{\zop}{b}
\newcommand{\iglob}{I_{\mathrm{Global}}}
\newcommand{\iloc}{I_{\mathrm{Discs}}}
\newcommand{\pglob}{P_{\mathrm{Global}}}
\newcommand{\ploc}{P_{\mathrm{Discs}}}
\newcommand{\hglob}{h_{\mathrm{Global}}}
\newcommand{\hlocx}{h_{\mathrm{Discs^\times}}}
\newcommand{\hoffdiag}{h_{\mathrm{off diag.}}}
\newcommand{\itot}{I}
\newcommand{\ptot}{P}
\newcommand{\RRR}{B}
\newcommand{\gp}{\g_+}
\newcommand{\gm}{\g_-}
\newcommand{\xx}{\mathbf x}
\newcommand{\ttop}{\times^\Top}
\newcommand{\Eta}{{\mathrm E}}
\newcommand{\dfn}[1]{\emph{\uline{#1}}}
\newcommand{\Linfinity}{{\texorpdfstring{$L_\8\,$}{L-infinity}}}
\newcommand{\Ip}{i_+}
\renewcommand{\Im}{i_-}
\newcommand{\oxC}{\ox}
\def\coords{
\coordinate (z=0)  at (0, 1,0)  ;  
\coordinate (w=0)  at (0,-1,0)  ;  
\coordinate (z=8)  at ( 1,0,0)  ;  
\coordinate (w=8)  at (-1,0,0)  ;  
\coordinate (z=u)  at (0,0, 1)  ;  
\coordinate (w=U)  at (0,0,-1)  ;  
\coordinate (p1p1) at (0,0, 0)  ;
\path (z=0) + (w=U) coordinate (0U);
\path (z=0) + (w=8) coordinate (08);
\path (z=u) + (w=0) coordinate (u0);
\path (z=u) + (w=8) coordinate (u8);
\path (z=8) + (w=0) coordinate (80);
\path (z=8) + (w=U) coordinate (8U);


\path (barycentric cs:{0U}=1,{z=0}=1) coordinate ({0U}{z=0});
\path (barycentric cs:{08}=1,{z=0}=1) coordinate ({08}{z=0});
\path (barycentric cs:{u0}=1,{z=u}=1) coordinate ({u0}{z=u});
\path (barycentric cs:{u8}=1,{z=u}=1) coordinate ({u8}{z=u});
\path (barycentric cs:{80}=1,{z=8}=1) coordinate ({80}{z=8});
\path (barycentric cs:{8U}=1,{z=8}=1) coordinate ({8U}{z=8});

\path (barycentric cs:{u0}=1,{w=0}=1) coordinate ({u0}{w=0});
\path (barycentric cs:{80}=1,{w=0}=1) coordinate ({80}{w=0});
\path (barycentric cs:{0U}=1,{w=U}=1) coordinate ({0U}{w=U});
\path (barycentric cs:{8U}=1,{w=U}=1) coordinate ({8U}{w=U});
\path (barycentric cs:{08}=1,{w=8}=1) coordinate ({08}{w=8});
\path (barycentric cs:{u8}=1,{w=8}=1) coordinate ({u8}{w=8});

\path (barycentric cs:{0U}=1,{p1p1}=1) coordinate ({0U}{p1p1});
\path (barycentric cs:{08}=1,{p1p1}=1) coordinate ({08}{p1p1});
\path (barycentric cs:{u0}=1,{p1p1}=1) coordinate ({u0}{p1p1});
\path (barycentric cs:{u8}=1,{p1p1}=1) coordinate ({u8}{p1p1});
\path (barycentric cs:{80}=1,{p1p1}=1) coordinate ({80}{p1p1});
\path (barycentric cs:{8U}=1,{p1p1}=1) coordinate ({8U}{p1p1});

\path (barycentric cs:{z=0}=1,{p1p1}=1.4) coordinate ({z=0}{p1p1});
\path (barycentric cs:{z=u}=1,{p1p1}=1.4) coordinate ({z=u}{p1p1});
\path (barycentric cs:{z=8}=1,{p1p1}=1.4) coordinate ({z=8}{p1p1});
\path (barycentric cs:{w=0}=1,{p1p1}=1.4) coordinate ({w=0}{p1p1});
\path (barycentric cs:{w=U}=1,{p1p1}=1.4) coordinate ({w=U}{p1p1});
\path (barycentric cs:{w=8}=1,{p1p1}=1.4) coordinate ({w=8}{p1p1});

\path (barycentric cs:{0U}=1,{z=0}=1.3,p1p1=1) coordinate ({0U}{z=0}{p1p1});
\path (barycentric cs:{08}=1,{z=0}=1.3,p1p1=1) coordinate ({08}{z=0}{p1p1});
\path (barycentric cs:{u0}=1,{z=u}=1.3,p1p1=1) coordinate ({u0}{z=u}{p1p1});
\path (barycentric cs:{u8}=1,{z=u}=1.3,p1p1=1) coordinate ({u8}{z=u}{p1p1});
\path (barycentric cs:{80}=1,{z=8}=1.3,p1p1=1) coordinate ({80}{z=8}{p1p1});
\path (barycentric cs:{8U}=1,{z=8}=1.3,p1p1=1) coordinate ({8U}{z=8}{p1p1});
\path (barycentric cs:{u0}=1,{w=0}=1.3,p1p1=1) coordinate ({u0}{w=0}{p1p1});
\path (barycentric cs:{80}=1,{w=0}=1.3,p1p1=1) coordinate ({80}{w=0}{p1p1});
\path (barycentric cs:{0U}=1,{w=U}=1.3,p1p1=1) coordinate ({0U}{w=U}{p1p1});
\path (barycentric cs:{8U}=1,{w=U}=1.3,p1p1=1) coordinate ({8U}{w=U}{p1p1});
\path (barycentric cs:{08}=1,{w=8}=1.3,p1p1=1) coordinate ({08}{w=8}{p1p1});
\path (barycentric cs:{u8}=1,{w=8}=1.3,p1p1=1) coordinate ({u8}{w=8}{p1p1});
}
\def\globalpicthree
\def\Rgpicsthree{
$ $\vspace{-0cm}\be
\begin{tikzpicture}[baseline=-50]
\begin{scope}[scale=.3,local bounding box = Global]
\fill[gray!15] (1,-1) rectangle (16,11); 
\draw (0,0) -- (17,0) node[right] (z0) {$z=z_1$};
\draw (0,10) -- (17,10) node[right] (z8) {$z=z_3$};
\draw (0,3) -- (17,3) node[right] (zu) {$z=z_2$};
\draw (2,12) -- (2,-2) node[below=-.1cm] (wU) {$w=w_1$};
\draw[] (9,12) -- (9,-2) node[below] (w1) {$w=w_2$};
\draw (15,12) -- (15,-2) node[below] (w1) {$w=w_3$};
\draw[fill=black]  (2,3) circle (.2);\draw[fill=black]  (2,10) circle (.2);\draw[fill=white]  (2,0) circle (.2);
\draw[fill=white]  (9,3) circle (.2);\draw[fill=black]  (9,10) circle (.2);\draw[fill=black]  (9,0) circle (.2);
\draw[fill=black]  (15,3) circle (.2);
\draw[fill=white]  (15,10) circle (.2);
\draw[fill=black]  (15,0) circle (.2);
\end{scope}
\end{tikzpicture}\qquad
\begin{tikzpicture}
\begin{scope}[xshift=0,scale=2,
x = {(0+90:1cm)}, 
y = {(120+90:1cm)}, 
z = {(240+90:1cm)},
local bounding box=G]
\globalpicthree
\end{scope}
\end{tikzpicture}
\nn\ee
}
\def\squarecoords
\def\drawthree
\def\threefourpiconetwothree
\def\threefourpicikl
\def\fourfourpicijkl
\def\fourfourpiccdef
\DeclareMathOperator{\Flag}{Flag}
\author{Luigi Alfonsi and Charles Young}
\address{
Department of Physics, Astronomy and Mathematics, University of Hertfordshire, College Lane, Hatfield AL10 9AB, UK.}  \email{l.alfonsi@herts.ac.uk, c.young8@herts.ac.uk}
\date{\today}
\begin{document}

\ifdefined\noexternalize\else\tikzexternaldisable\fi
\title[Higher current algebras and homotopy Manin triples]{Higher current algebras, homotopy Manin triples, and a rectilinear adelic complex}

\begin{abstract}
The notion of a Manin triple of Lie algebras admits a generalization, to dg Lie algebras, in which various properties are required to hold only up to homotopy. 

This paper introduces two classes of examples of such \emph{homotopy Manin triples}. These examples are associated to analogs in complex dimension two of, respectively, the punctured formal 1-disc, and the complex plane with multiple punctures. The dg Lie algebras which appear include certain \emph{higher current algebras} in the sense of Faonte, Hennion and Kapranov \cite{FHK}. 

We work in a ringed space we call \emph{rectilinear space}, and one of the tools we introduce is a model of the derived sections of its structure sheaf, whose construction is in the spirit of the adelic complexes for schemes due to Parshin and Beilinson. 

\end{abstract}


\maketitle
\setcounter{tocdepth}{1}
\tableofcontents


\section{Introduction}
\subsection{Higher current algebras and homotopy Manin triples}
Manin triples are fundamental objects in integrable systems and quantum groups. 
Recall that a Manin triple $(\a,\pairing --,\a_+,\a_-)$ is a Lie algebra $\a$ with a symmetric nondegenerate invariant bilinear form, and two isotropic Lie subalgebras $\a_\pm \subset \a$ such that $\a=_\CC \a_+\oplus \a_-$ as vector spaces \cite{DrinfeldQuantumGroups}.
One important class of examples arises when when $\a$ is a current algebra: namely, one may take
\be \a = \g \ox \CC((z)), \quad \a_+ = \g \ox \CC[[z]], \quad \a_- = \g \ox z^{-1} \CC[z^{-1}] \label{yangman}\ee
and $\pairing{x \ox f(z)}{y\ox g(z)} = \bilin x y \res_z f(z)g(z)$, with $\g$ any finite-dimensional simple Lie algebra over $\CC$ and $\bilin --$ its standard bilinear form. (See, e.g., \cite{ChariPressley,BBTbook}.) 

Here the commutative algebras $\CC[[z]]$, of formal Taylor series, and $\CC((z))$, of formal Laurent series, can be seen as algebras of functions 
\be \CC[[z]] \cong \Gamma(\Disc_1,\hat\O), \qquad \CC((z)) \cong \Gamma(\Discp_1,\hat\O),\nn\ee
on, respectively, the formal disc $\Disc_1$, 
and the punctured formal disc $\Discp_1:=\Disc_1 \setminus\{\pt\}$, in complex dimension one. 

It is natural to try to extend this to higher dimensions
, but an apparent obstacle arises: in dimension $n\geq 2$, the structure sheaf $\hat\O$ admits no more sections over the punctured disc $\Discp_n = \Disc_n\setminus\{\pt\}$ than it does over the disc $\Disc_n$ itself: $\Gamma(\Discp_n,\hat\O) = \Gamma(\Disc_n,\hat\O) \cong \CC[[w,z]]$. So the would-be higher current algebra $\g\ox \Gamma(\Discp_n,\hat\O)$ seems to be ``missing all the  negative modes''. 
However, Faonte, Hennion and Kapranov \cite{FHK} -- and see also \cite{KapranovConformal,Kapranov,GWHigherKM} -- make the following observation: one may replace these algebras of functions by their derived analogs, $R\Gamma(\Disc_2,\hat\O)$ and $R\Gamma(\Discp_2,\hat\O)$. By definition $R\Gamma(\Discp_2,\hat\O)$ is a cochain complex whose cohomology computes the sheaf cohomology of $\hat\O$, as we recall in \cref{sec: sheaf cohomology} below. And it is well known that the structure sheaf $\hat\O$ has higher cohomology when $n\geq 2$. In this paper we focus exclusively on the case of dimension $n=2$, where one has
\be H^\bul(\Discp_2,\hat\O) \cong \begin{cases} \CC[[w,z]] & \bul = 0 \\ w^{-1} z^{-1} \CC[w^{-1},z^{-1}] & \bul = 1 \\ 0 & \text{otherwise.}
\end{cases}
\nn\ee
One sees that natural candidates for the ``missing'' negative modes reemerge in the first cohomology.
Moreover, as stressed in \cite{FHK}, the derived sections form more than just a cochain complex: $R\Gamma(\Discp_2,\hat\O)$ is naturally a  differential graded (dg) commutative algebra (unique up to quasi-isomorphism). Thus, one obtains the dg Lie algebra 
\be \g \ox R\Gamma(\Discp_2,\hat\O), \nn\ee
which it is natural to call a \dfn{higher current algebra}. 

\bigskip

The first goal of this paper is to give an analog of the Manin triple \cref{yangman} above, in complex dimension two, following this philosophy. To do so, we first need to clarify what it should mean to give a Manin triple in the differential graded setting. We give our definition in \cref{sec: Manin triples}: roughly speaking, it says that a \dfn{homotopy Manin triple (in dg Lie algebras)} is a triple of dg Lie algebras and maps of dg Lie algebras between them
\be \a_+ \xrightarrow{\iota_+} \a \xleftarrow{\iota_-} \a_-,\nn\ee
such that there is a homotopy equivalence $\a \simeq \a_+\oplus \a_-$ of dg vector spaces; together with an invariant pairing $\pairing --$ on $\a$ which is non-degenerate up to homotopy, and for which $\a_\pm$ are isotropic, again up to homotopy. 

Manin \Linfinity-triples -- or strongly homotopy Manin triples -- have been defined previously in \cite{Kravchenko}. The definition we give here is compatible with that definition in a sense we discuss in \cref{rem: l8}. 

Then our first result, \cref{thm: local mt}, gives an example of such a homotopy Manin triple. We call it a ``local'' homotopy Manin triple because it is associated to the formal punctured polydisc at a point in complex dimension two. Namely it has 
\be \a = \g \ox R\Gamma(\DDx,\hat\O), \qquad \a_+ = \g \ox R\Gamma(\DD,\hat\O) \cong \g \ox \CC[[w]] \oxC \CC[[z]] \label{loci}\ee
and $\a_-$ a certain differential graded Lie algebra whose cohomology is a copy of 
\be H^1(\a) \cong \g\ox w^{-1}z^{-1} \CC[w^{-1},z^{-1}]\nn\ee
sitting in cohomological degree one. 

(We shall explain the meaning of the \emph{poly}disc, $\DDx$, in a moment.)

\bigskip

Next, our main result is \cref{thm: global mt}, which exhibits what we call a ``global'' homotopy Manin triple. It is an analog, in complex dimension two, of another important and familiar class of Manin triples, namely those of the form
\be \a = \g \ox \bigoplus_{i=1}^N \CC((x-a_i)), \quad \a_+ = \g \ox \bigoplus_{i=1}^N \CC[[x-a_i]] \nn\ee
and 
\be \a_- = \g \ox \CC(x)_{a_1,\dots,a_N}^\8 \label{amintro}\ee
where $\CC(x)_{a_1,\dots,a_N}^\8$ denotes  the commutative algebra of rational expressions in $x$ vanishing at $\8$ and singular at most at the finitely many distinct points $a_1,\dots,a_N\in \CC$.
One may think of these points
\be a_1,\dots,a_N \nn\ee
as \emph{marked points} or \emph{punctures} in the complex plane. This family of Manin triples is -- after introducing a central extension -- closely related to rational Gaudin models \cite{FFR} and rational conformal blocks on the Riemann sphere; see \cite{FrenkelBenZvi} and references therein. 

The homotopy Manin triple $(\a,\a_+,\a_-)$ we introduce in \cref{thm: global mt} is also defined by a finite collection of distinct marked points,
\be (w_1,z_1),\dots,(w_N,z_N),\nn\ee
which now live in $\CC^2$. 
For $\a$ and $\a_+$ it has direct sums of copies of the dg Lie algebras mentioned in \cref{loci} above, one for each marked point:
\be \a \cong \bigoplus_{i=1}^N \g \ox R\Gamma(\DDpx{{w_i,z_i}},\hat \O),\qquad \a_+  \cong \bigoplus_{i=1}^N \g \ox R\Gamma(\DDp{{w_i,z_i}},\hat \O) ,\nn\ee
and it has 
\be \a_- = \gglob, \nn\ee 
a certain dg Lie algebra which we shall describe in detail in \cref{sec: global manin triple} below.

\bigskip

We should stress that the existence of this homotopy Manin triple of \cref{thm: global mt} should not be seen as particularly surprising, conceptually. It is essentially implicit already in \cite{FHK}: see Proposition 1.1.4 there and \cref{prop: hoker} below. 
Our main goal in the present paper is rather to give explicit \emph{models}, in dg Lie algebras, for the various derived algebras above and the maps between them, and to give explicit descriptions of the various homotopies. Our hope is that these models will prove to be useful for doing concrete calculations. 

Now we indicate how we construct these models and introduce the other main theme of the present paper, which is what we call the \emph{rectilinear} setting.

\subsection{The rectilinear setting}
At this point we are about to do something which, to an algebraic geometer, will probably appear mildly barbarous.

Rather than starting with the usual affine space $\AA^2 = \Spec\CC[w,z]$, in this paper we are going to start instead with the product \emph{in topological spaces} of two copies of the affine line $\AA^1$. We shall call the resulting space,
\be \Rect := \AA^1 \ttop \AA^1,\nn\ee
rectilinear space. It has the same set of closed points as $\AA^2$, namely $\{(a,b)\in \CC^2\}$, but very many fewer generalized points: it has points corresponding to the rectilinear lines $w=a$ and $z=a$, $a\in \CC$, but it lacks points corresponding to all the other algebraic curves:
\be\includegraphics{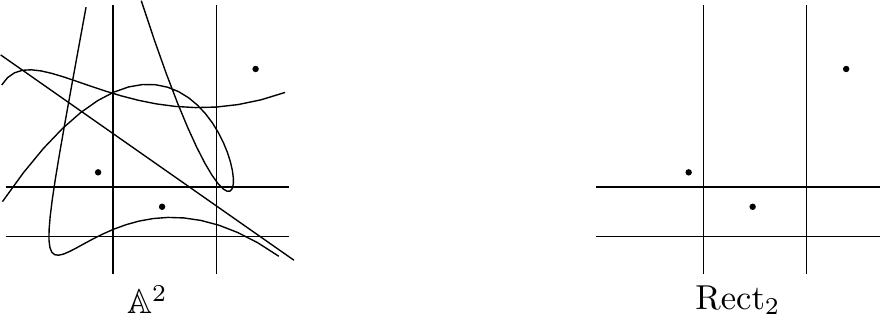}
\nn\ee
We get a ringed space $(\Rect,\O)$ where $\O$ is the sheaf of commutative algebras whose spaces of local sections are spanned by products $f(w) g(z)$ of rational functions.

Similarly, locally, in place of the formal 2-disc $\Disc_2 = \Spec\CC[[w,z]]$, we shall work with the product in topological spaces of two copies of the formal 1-disc,
\be \DD := \Disc_1 \ttop \Disc_1, \nn\ee
which we shall call the formal polydisc. Like $\Disc_2$, it has a single closed point, $(0,0)$, and generalized points corresponding to the lines $w=0$ and $z=0$, but it lacks points corresponding to all the other germs of algebraic curves:
\be\includegraphics{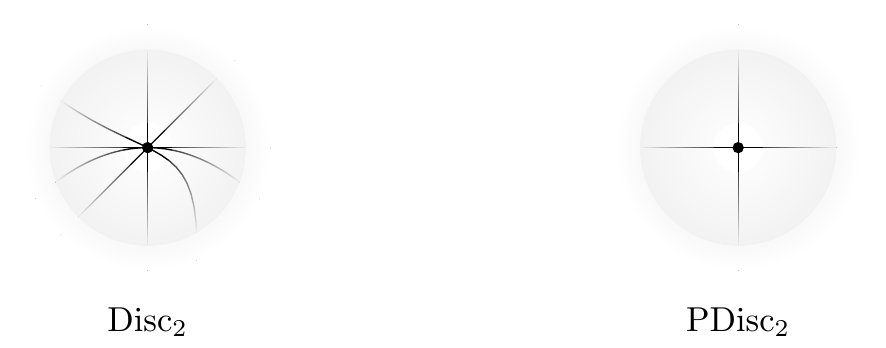}
\nn\ee

This choice breaks symmetry, and results in a space with fewer good properties: $\AA^2$ is an affine scheme; $\Rect$ is merely a ringed space. 
To motivate it, let us give some background on the problem in mathematical physics we are seeking to develop tools to address.

\subsection{Motivation}
We are ultimately interested in solving the spectral problem for integrable quantum field theories in $1+1$ spacetime dimensions. The specific approach we have in mind originates in the paper \cite{FFsolitons}, in which Feigin and Frenkel make the important observation that certain integrable quantum field theories can be seen as quantum Gaudin models of affine type. See also \cite{FH}. And indeed, Vicedo \cite{V17} showed that large classes of classical integrable field theories can be realized as classical Gaudin models of affine type. See also \cite{DLMV2,LacroixThesis,Lacroix2019,ABL}.  

By ``solving the spectral problem for an integrable quantum theory'', we mean roughly speaking defining a hierarchy of mutually commuting conserved quantities (``higher Hamiltonians'') and then characterizing their joint spectra on suitable classes of representations.  

Quantum Gaudin models of \emph{finite} type are among the best-understood quantum integrable systems. In particular, there is a complete description of their higher Hamiltonians, which generate what is known variously as a Bethe or Gaudin algebra \cite{Fopers,MTV1,MTVschubert,RybnikovProof}, and of the spectra of these higher Hamiltonians, which is given in terms of certain local systems called opers \cite{BDopers,Fopersontheprojectiveline,MVopers}. This description can be interpreted as the geometric incarnation of the Langlands correspondence \cite{Fre07}.

At least from one perspective, tools from chiral conformal field theory -- namely, vertex algebras and rational conformal blocks -- are key to establishing these results. These rational conformal blocks are defined \emph{on the spectral plane}, i.e. the copy of the complex plane $\CC$ with marked points which defines any rational Gaudin model;  a Gaudin model of finite type is not in any sense a field theory itself. 

Quantum Gaudin models of affine type are less systematically understood (though see \cite{FFsolitons,FH,LVY,LVY2,Young,KTL,FY}). One problem is that it is difficult to extend tools which work in finite type to affine types; see e.g. \cite{Young2} for an attempt to do so for the Wakimoto construction/Feigin-Frenkel homomorphism. In any case, any attempt to generalise from a finite type algebra $\g$ to an affine type algebra $\gh$ is almost bound to be missing half of the story if both are merely regarded as Kac-Moody algebras, because it ignores the geometrical interpretation of $\gh$ as a centrally extended current algebra, associated to a punctured disc; which is to say, morally speaking, that it ignores the fact that the Gaudin model is describing a field theory. Indeed, vertex algebras, and the additional control they give over the representation theory of $\gh$, should again enter the picture, but these vertex algebras should be associated to a copy of $\CC$ which is quite distinct from the spectral plane. In the case of chiral conformal field theories at least, this new copy of $\CC$ is morally the \emph{worldsheet} of the field theory.

Thus, one expects to be in a situation in which there are two copies of the complex plane in play, 
\be \CC_{\text{spectral plane}} \times \CC_{\text{worldsheet}}, \nn\ee
whose interpretations are conceptually quite distinct.
Let $w,z$ be the Cartesian coordinates. It is natural to think that one will ultimately want to attach representation-theoretic data to:
\begin{itemize}
\item rectilinear lines, e.g. fibres $(w=a)$ over points in the spectral plane, 
\item points $(a,b)$, and probably also
\item rectilinear flags, e.g.
\be (a,b) \subset (w=a) \subset \CC^2 .\label{rflag}\ee
\end{itemize}
It is much less clear that one needs other curves such as $(w=z)$ or $(w=z^2)$ in this context; at the very least, the rectilinear lines and flags certainly enjoy a preferred status. To say much the same thing another way: one expects to encounter functions belonging to $\CC(w) \oxC \CC(z)$, such as $\frac{1}{w-a} \frac{1}{z-b}$, but the function $\frac{1}{w-z}$, for example, looks very strange in this context since the physical meanings of the coordinates $w$ and $z$ are so different.

It is these intuitions which lead us to consider the rectilinear space $(\Rect,\O)$ we sketched above. 

With this digression on motivations complete, let us return to describing the contents of the present paper.

\subsection{The rectilinear adelic complex} 
In view of the discussion above, one would like to have an explicit model for the derived spaces of sections $R\Gamma(-,\O)$ of the structure sheaf $\O$ on rectilinear space $\Rect$, and one would like this model to have the property of being well-adapted to eventually attaching data to rectilinear lines and flags, as well as closed points. 

That leads us to construct a model of $R\Gamma(-,\O)$ in the spirit of the adelic complexes for schemes due to Parshin \cite{Parshin1976} and Beilinson \cite{BeilinsonAdeles}. Their construction involves associating a commutative algebra to each flag of subschemes. 
The algebra associated to a given flag is defined by an elegant and rather intricate procedure of repeated localizations and completions.

In our case, what considering merely the topological product $\Rect= \AA^1\ttop\AA^1$ of affine lines buys us is that (a) we get a much smaller set of flags, consisting only of the rectilinear flags and (b) the algebras attached to flags are simpler to describe. The resulting \emph{rectilinear adelic complex} is given in \cref{sec: rect adel}, where the main result is \cref{thm: flasque res}. 

Subsequently, when we move to considering the homotopy Manin triples of \cref{thm: global mt}, we need only a \emph{finite} collection of flags, built from the finite set of those rectilinear lines which intersect our chosen collection of marked points $(w_i,z_i)_{i=1}^N$. 

In each case, the rectilinear flags form a semisimplicial set, and this gives rise to a semi\emph{co}simplicial commutative algebra. Then, by applying the Thom-Whitney functor whose definition we recall in \cref{sec: ThomWhitney}, we get a dg commutative algebra.

\subsection{Triangular decompositions of enveloping algebras}
Our final collection of results concerns the universal enveloping algebras. Recall that, in the usual case of Lie algebras, a Manin triple $(\a,\pairing --,\a_+,\a_-)$ encodes in particular a decomposition of $\a$ as the direct sum in vector spaces of two Lie subalgebras, $\a\cong_\CC \a_- \oplus \a_+$. That in turn gives rise to an isomorphism
\be U(\a) \cong U(\a_-) \oxC U(\a_+) \nn\ee
between the enveloping algebras; it is an isomorphism of vector spaces and, moreover, of $(U(\a_-), U(\a_+))$-bimodules \cite{Dixmier_1996}. 

In our present setting, of homotopy Manin triples of dg Lie algebras, we get something similar, at least in the special case that the homotopy equivalence of dg vector spaces $\a\simeq \a_- \oplus \a_+$ is actually a strong deformation retract  (we recall the definitions in \cref{sec: retracts} and \cref{sec: triangular decompositions}  below)
\be\begin{tikzcd} 
\a_- \oplus \a_+ \rar[shift left]{}  & \lar[shift left]{} \a \ar[loop right] 
\end{tikzcd}\nn\ee
That turns out to be true of our local homotopy Manin triple from \cref{sec: local manin triple}. The resulting triangular decomposition is given in \cref{sec: triangular decompositions}, \cref{cor: disc bimod}. On the other hand it is not true of the global homotopy Manin triple of \cref{sec: global manin triple}. The remainder of \cref{sec: triangular decompositions} is dedicated to introducing a certain modification of the global homotopy Manin triple for which this extra condition does hold; see \cref{thm: global retract} and its corollary \cref{cor: global bimod}.

(The need for strong deformation retracts here is related to the fact that we insist on staying with dg Lie algebras and their dg associative enveloping algebras, rather than going to the less concrete but perhaps conceptually more natural setting of \Linfinity algebras and their envelopes. This choice is motivated by nothing deeper than the authors' impression that PBW-type statements become rather subtle for \Linfinity algebras; see \cite{KhoroshkinTamaroff} and references therein including \cite{Baranovsky,MorenoFernandez}.)

\subsection{Outline}
This paper is structured as follows. 

After defining rectilinear space $(\Rect,\O)$ and the formal polydisc $(\DDx,\hat\O)$ in \cref{sec: rectilinear space}, we introduce  in \cref{sec: rectilinear flags} the semisimplicial set of rectilinear flags and associated semicosimplicial algebras. Then in \cref{sec: rect adel} we introduce the rectlinear adelic complex.

The Thom-Whitney, or Thom-Sullivan, construction is recalled in \cref{sec: ThomWhitney}. 

Then in \cref{sec: Manin triples} we give our notion of what it means to have a homotopy Manin triple, before giving our two classes of examples in \cref{sec: local manin triple} and \cref{sec: global manin triple}. These sections contain the main results of the paper, \cref{thm: local mt} and \cref{thm: global mt}. Finally, results about the enveloping dg associative algebras are collected in \cref{sec: triangular decompositions}. 

\bigskip
\begin{ack}
The authors would like to thank Jon Pridham for useful discussions. 
CY is grateful to Leron Borsten and Hyungrok Kim for useful discussions and many helpful suggestions.

This authors gratefully acknowledge the financial support of the Leverhulme Trust, Research Project Grant number RPG-2021-092. 
\end{ack}
\section{Rectilinear space and the formal polydisc}\label{sec: rectilinear space}
We work over $\CC$. The tensor product $\ox$ means $\ox_\CC$ throughout.

\subsection{The affine line $\AA^1$ and the formal disc $\Disc_1$}
Recall that one thinks of the polynomial algebra $\CC[z]$ as the algebra of functions on the affine line $\AA^1:= \Spec\CC[z]$, its prime spectrum. The prime ideals of $\CC[z]$ are $0$ and $(z-a)\CC[z]$ for every $a\in \CC$, so that as a set
\be \AA^1 = \{\eta\} \sqcup \CC ,\nn\ee
where $\eta$ is the generic point. The nonempty open sets in the Zariski topology are the complements $\AA^1 \setminus \{ c_1,\dots,c_k\}$ of finite collections of closed points $c_i\in \CC$, and the structure sheaf 
\be \O: \Open(\AA^1)^\op \to \CAlg; \quad \O(\AA^1 \setminus \{ c_1,\dots,c_k\}) = \CC(z)_{c_1,\dots,c_k} \nn\ee
assigns to such an open set the algebra $\CC(z)_{c_1,\dots,c_k}$ of rational expressions in $z$ singular at most at the missing points.
The stalk at $a\in \CC$ is $\O_a:=\directlim_{U\ni a} \O(U)= S_a^{-1} \CC[z]$, the localization of $\CC[z]$ away from the ideal generated by $(z-a)$ or in other words the algebra of rational expressions in $z$ with no singularity at $z=a$. At the generic point $\eta$ the stalk is $\O_\eta = \CC(z)$, 
the field of all rational expressions in $z$. 

The completion of $\CC[z]$ with respect to the maximal ideal $(z-a)\CC[z]$ is the algebra $\CC[[z-a]]$ of formal power series. One thinks of it as the algebra of functions on the \dfn{formal disc} at $a$, $\Disc_1(a) := \Spec\CC[[z-a]]$. The prime ideals of $\CC[[z-a]]$ are $0$ and the unique maximal ideal $(z-a)\CC[[z-a]]$, so that as a set the formal disc at $a$ has exactly two points, 
\be \Disc_1(a) = \{\eta, a\} ,\nn\ee
namely the generic point which we again call $\eta$, and the closed point $a$.
(When we wish to refer to an abstract copy of the formal disc, we shall sometimes write $\pt$ for the closed point $\Disc_1 = \{\eta,\pt\}$.)
The only nonempty open sets of $\Disc_1(a)$ are $\Disc_1(a)$ itself and the \dfn{punctured formal disc} at $a$, $\Discp_1(a) := \Disc_1\setminus \{a\} = \{\eta\}$. The structure sheaf, which shall denote by $\hat\O$, is given by 
\be \hat\O(\Disc_1(a)) = \CC[[z-a]], \qquad \hat\O(\Discp_1(a)) = \CC((z-a)) \label{odp}\ee
where $\CC((z-a))$ is the algebra of formal Laurent series. Its stalks are $\hat\O_a = \CC[[z-a]]$ and $\hat\O_\eta = \CC((z-a))$. 
Note that there are embeddings of algebras $\O_a \into \hat\O_a$ and $\O_\eta\into \hat\O_\eta$ given by expanding in formal (Laurent) series in the local coordinate $z-a$. 

Both the punctured affine line and the punctured formal disc are again affine schemes: $\AA^1\setminus\{a\} = \Spec \CC[(z-a)^{\pm 1}]$ and $\Discp_1(a) = \Spec \CC((z-a))$. 

\subsection{Rectilinear space $\Rect$}
Let us denote by 
\be \Rect := \AA^1 \ttop \AA^1 \nn\ee
the product in topological spaces of two copies of the affine line. Thus, $\Rect$ is the set-theoretic product $\AA^1\times \AA^1$, endowed with the product of the Zariski topologies. Let $w,z: \CC \times \CC \to \CC$ be the Cartesian coordinates. 
As a set, $\Rect$ consists of 
\begin{itemize}
\item All the points $(a,b) \in \CC^2$
\item All the lines $(w=a)$ for $a\in \CC$
\item All the lines $(z=b)$ for $b\in \CC$
\item The generic point $\Eta$.
\end{itemize}
Here, we are adopting the suggestive notations $(w=a)$ and $(z=b)$ for $(a,\eta)$ and $(\eta,b)$ respectively, and we write $\Eta$ for the generic point $(\eta,\eta)$. The closure of the line $(w=a)$ consists of the line and all its points: $\ol{\{(w=a)\}} = \{ (w=a) \} \sqcup \{(a,b): b\in \CC\}$; the closure of the generic point $\Eta$ is all of $\Rect$: $\ol{\{\Eta\}} = \Rect$.  
Complements of closures of lines form a base of the open sets. Thus, every nonempty open subset is the complement of finitely many closed points and the closures of finitely many lines:
\begin{subequations}\label{Udef}
\be U = \Rect \setminus \bigcup_{i=1}^m \ol{\{(w=a_i)\}} \setminus \bigcup_{j=1}^n \ol{\{(z=b_j)\}} \setminus \bigcup_{k=1}^p\{ (c_k,d_k) \} .\ee
Let $\O: \Open_{\Rect}^\op \to \CAlg$ be given by 
\begin{align} 
\O(U) := \CC(w)_{a_1,\dots,a_m} \oxC \CC(z)_{b_1,\dots,b_n}.
\end{align}
\end{subequations}
This is a sheaf in commutative algebras. Its restriction maps are all injections.

In this way, we get a ringed space $(\Rect,\O)$ which we call \dfn{rectilinear space}. Its algebra of global sections is the polynomial algebra $\mc O(\Rect) = \CC[w]\oxC \CC[z] = \CC[w,z]$. The stalk at any closed point $(a,b)\in \CC^2\into\Rect$ is
\be \mc O_{(a,b)} = S_a^{-1}\CC[w] \oxC S_b^{-1} \CC[z] ,\nn\ee
which is a local ring: its unique maximal ideal is $(w-a) \mc O_{(a,b)} + (z-b) \mc O_{(a,b)}$. 
However, the remaining stalks are not local rings: for example, the stalk of $\O$ at the generic point $\Eta\in \Rect$ is 
\be \O_\Eta = \CC(w) \oxC\CC(z).\nn\ee

\subsection{The formal polydisc $\DD$}
Let us denote by 
\be \DDp{{a,b}} = \Disc_1(a) \ttop \Disc_1(b) \nn\ee
the product in topological spaces of two copies of the formal disc. As a set  $\DDp{{a,b}}$ consists of four points: 
\begin{itemize}
\item The closed point $(a,b) \in \CC^2$
\item The line $(w=a)$
\item The line $(z=b)$
\item The generic point $\Eta$.
\end{itemize}
We call $\DDp{{a,b}}$ the \dfn{formal polydisc}, and 
\be \DDpx{{a,b}}:= \DDp{{a,b}} \setminus \{(a,b)\}, \nn\ee
the \dfn{punctured formal polydisc}, at $(a,b)$.  
Let $\hat\O$ be the sheaf in commutative algebras on $\DDp{{a,b}}$ given by
\begin{align}
\hat\O(\DDp{{a,b}}) = \hat\O( \DDpx{{a,b}} ) &=  \CC[[w-a]] \oxC \CC[[z-b]] \label{odp2}\\
\hat\O(\DDp{{a,b}} \setminus \ol{\{(w=a)\}}) &=  \CC((w-a)) \oxC \CC[[z-b]] \nn\\
\hat\O(\DDp{{a,b}} \setminus \ol{\{(z=b)\}}) &=  \CC[[w-a]] \oxC \CC((z-b)) \nn\\
\hat\O(\{ \Eta \}) &=  \CC((w-a)) \oxC \CC((z-b)) \nn
\end{align}
whose stalks are
\begin{alignat}{2} 
\hat\O_{\Eta}  &= \CC((w-a)) \oxC \CC((z-b))  &\qquad \hat\O_{(w=a)} &= \CC[[w-a]] \oxC \CC((z-b))  \nn\\
\hat\O_{(z=b)} &= \CC((w-a)) \oxC \CC[[z-b]]  &\qquad \hat\O_{(a,b)} &= \CC[[w-a]] \oxC \CC[[z-b]].  \nn
\end{alignat}
(It is the external tensor product of the structure sheaves on the two factors $\Disc_1$.)

When we identify $\DDp{{a,b}}$ as a subset of $\Rect$ in the obvious way, there are embeddings of algebras $\O_x \into \hat\O_x$ for every point $x\in \DDp{{a,b}}$,  given by expanding in formal series in both the local coordinates, $w-a$ and $z-b$. 

\subsection{Derived global sections and higher sheaf cohomology}\label{sec: sheaf cohomology}
There is a vital difference between the disc $\Disc_1$ and the polydisc $\DD$. If we remove the closed point from the $\Disc_1$, the algebra of global sections of the structure sheaf gets bigger, as we see in \cref{odp}:
\be \Gamma(\Disc_1,\hat\O) = \CC[[z]] \subsetneq \CC((z)) = \Gamma(\Discp_1,\hat\O). \nn\ee
By contrast, if we remove the closed point from the polydisc in dimension two, we don't get any more sections than we had before, as we see in \cref{odp2}:
\be \Gamma(\DD,\hat\O) = \CC[[w]] \oxC \CC[[z]] = \Gamma(\DDx,\hat\O). \nn\ee 

The same is true of $\Rect$ compared to the affine line $\AA^1$. If we remove a single point $c\in \CC$ from the affine line, the algebra of global sections of the structure sheaf goes from the polynomial algebra $\CC[z]$ to the algebra $\CC(z)_c = \CC[(z-c)^{\pm 1}]$ of Laurent polynomials. By contrast,  note the lack of dependence on the $(c_k,d_k)$ in \cref{Udef}: we may remove as many closed points as we wish and no more global sections appear.

This is a classical phenomenon which occurs also for the structure sheaf on the usual affine plane  $\AA^2:= \Spec\CC[w,z]$. It has an analog in the complex-analytic setting, known as Hartog's theorem (see e.g. \cite[Theorem 1.8]{Kodaira} or \cite{Hartshorne}): in complex dimension at least two, every holomorphic function on a punctured polydisc can be analytically continued to a function on the unpunctured polydisc. 

As stressed in \cite{FHK,Kapranov}, and in \cite{GWHigherKM}, one should think that the ``missing'' global sections over the punctured space have not vanished, but merely moved to higher cohomology.
Recall that the \dfn{derived space of global sections} $R\Gamma^\bul(\DDx,\hat\O)$ of the sheaf $\hat\O$ on $\DDx$ is the cochain complex defined, up to quasi-isomorphism, by the requirement that its cohomology computes the sheaf cohomology of $\hat\O$, 
\be H^\bul(R\Gamma(\DDx,\hat\O)) \cong H^\bul(\DDx,\hat\O).  \nn\ee
There is higher cohomology in the case of the punctured polydisc (cf. \cref{cor: cohomology} below)
\be
H^k(\DDx,\hat\O) = 
\begin{cases} \CC[[w]]\oxC\CC[[z]] & k =0 \\ w^{-1} z^{-1} \CC[w^{-1}, z^{-1}] & k = 1 
\\ 0 & k \notin \{ 0,1\}
  \end{cases} \nn
\ee
One can think of $w^{-1} z^{-1} \CC[w^{-1}, z^{-1}]$ here as the higher analog of the negative modes $z^{-1} \CC[z^{-1}] \subset \CC((z))$ in the one-dimensional case.

Our aim in the present paper is to construct explicit models of spaces of derived sections, in our rectilinear setting, with the following properties:
\begin{itemize}
\item we want models both for the local situation, i.e. for $R\Gamma(\DDx,\hat O)$, and for the global case $R\Gamma(\Rect\setminus\{\text{closed points}\},\O)$;
\item we want these models to make explicit the global-to-local maps 
\be R\Gamma(\Rect\setminus\bigcup_i\{(a_i,b_i,\O)\}) \to R\Gamma(\DDpx{{a_i,b_i}},\hat O),\nn\ee that are the higher analogs of taking formal Laurent expansions;
\item ultimately we want models in dg commutative algebras, rather than just dg vector spaces.
\end{itemize}
Our motivation is that we want explicit models, in dg Lie algebras, for the higher analogs of the usual current Lie algebras $\g \ox \CC((t)) = \g\ox \Gamma(\Discp_1,\hat O)$, and their global analogs, and the dg Lie algebra maps between them.

Models in dg commutative algebras for derived sections can be obtained in various ways; see \cite[Appendix A]{Kapranov}. The construction we use centres on the Thom-Sullivan-Whitney functor, whose definition we recall in \cref{sec: ThomWhitney}.

The starting point is the familiar definition of sheaf cohomology. Recall that the sheaf cohomology of $\hat\O$ on $\DDx$ is, by definition, the cohomology $H^\bul(\DDx,\hat O):= H^\bul(\F)$ of any resolution 
\be 0 \to \hat\O \to \F^0 \to \F^1 \to \dots \nn\ee
of $\hat\O$ by flasque sheaves. 
We shall construct such resolutions, of $\hat \O$ and of $\O$, in the spirit of the adelic complexes for schemes due to Parshin \cite{Parshin1976} and Beilinson \cite{BeilinsonAdeles}, but adapted to our simpler rectilinear spaces, $\DD$ and $\Rect$ (which are not schemes).

\begin{rem}
For more on adelic complexes, see e.g. \cite{Huber,Parshin2000,Osipov,Parshin2011}.
In the case of schemes, the algebras attached to flags of subschemes are defined by repeated localizations and completions -- see \cite{Osipov}, especially \S3.2 and \S3.3, and references therein. 
One of the ways in which our present rectilinear setting is simpler is that, because we just have a topological product of affine lines, the algebras we attach to flags below will be merely products of algebras appearing in that familiar case.  
Another is that we simply have a much smaller semisimplicial set of flags, because we need only the rectilinear flags.
\end{rem}

\section{Semicosimplicial algebras associated to rectilinear flags}\label{sec: rectilinear flags}
We must first introduce semisimplicial sets of rectilinear flags, and algebras associated to them: these will be the building blocks of our models for spaces of derived sections.

\subsection{Rectilinear flags}\label{sec: flags}
Given any subset $U\subseteq \Rect$ (open or not), let $\Flag_n(U)$ for $n=0,1,2$ denote the set of $n$-step flags in $U$:
\begin{align}
\Flag_n(U) &:= 
\left\{ (a_0,\dots,a_n)\in U^n: \ol{\{a_0\}} \subsetneq \dots \subsetneq \ol{\{a_n\}} \right\}.\nn
\end{align} 
(Thus, $\Flag_1(\Rect)$ consists of points-in-lines and lines-in-the-surface, and $\Flag_2(\Rect)$ consists of points-in-lines-in-the-surface.)

Let 
\begin{align} 
\del^n_i  : \Flag_{n+1}(U) &\to \Flag_n(U);\nn\\ 
(a_0,\dots,a_{n+1}) &\mapsto (a_0,\dots,\widehat a_i,\dots,a_{n+1}) \nn
\end{align}
be the map given by removing one space from a flag, for $i=0,\dots,n+1$ and $n=0,1,2$.
These maps endow $\Flag_\bul(U)$ with the structure of a \dfn{semisimplicial set}, as we recall below. For intuition, one should visualise  the example of the finite set of flags $\Flag_\bul(\DDp{{a,b}})$ for the formal polydisc $\DDp{{a,b}} \subset \Rect$ at the closed point $(a,b)$. It consists of  exactly four vertices, five edges and two 2-simplices, while the set $\Flag_\bul(\DDpx{{\wop,\zop}})\subset \Flag_\bul(\DDp{{\wop,\zop}})$ of flags in the punctured formal polydisc has exactly three vertices, two edges and no higher simplices:
\be
\begin{tikzpicture} 
\tdplotsetmaincoords{0}{180}
\begin{scope}[xshift=-5cm,yshift=4cm,scale=3,tdplot_main_coords,local bounding box=D2]
\coordinate (uU) at (1,-1,0);
\coordinate (w=U)  at (1,0,0)  ;  
\coordinate (z=u) at (0,-1,0)  ;  
\coordinate (p1p1) at (0,0,0) ;   
\draw[thick,black,fill=blue,fill opacity=.1] (p1p1) -- (w=U) -- (uU)   --cycle; 
\draw[thick,black,fill=blue,fill opacity=.1] (p1p1) -- (z=u) -- (uU)  --cycle; 

\node[fill,circle,inner sep=1.5,draw,label= left:{$(\wop,\zop)$}]   (NuU)   at (uU)  {};
\node[fill,circle,inner sep=1.5,draw,label= right:{$(z=\zop)$}]   (Nz=u)  at (z=u) {};
\node[fill,circle,inner sep=1.5,draw,label= left:{$(w=\wop)$}]   (Nw=U)  at (w=U) {};
\node[fill,circle,inner sep=1.5,draw,label= right:{$\Eta$}] (Np1p1) at (p1p1)  {};
\end{scope}
\begin{scope}[xshift=3cm,yshift=4cm,scale=3,tdplot_main_coords,local bounding box=D2p]
\coordinate (w=U)  at (1,0,0)  ;  
\coordinate (z=u) at (0,-1,0)  ;  
\coordinate (p1p1) at (0,0,0) ;   
\draw[thick,black,fill=blue,fill opacity=.1] (p1p1) -- (w=U);
\draw[thick,black,fill=blue,fill opacity=.1] (p1p1) -- (z=u);
\node[fill,circle,inner sep=1.5,draw,label= right:{$(z=\zop)$}]   (Nz=u)  at (z=u) {};
\node[fill,circle,inner sep=1.5,draw,label= left:{$(w=\wop)$}]   (Nw=U)  at (w=U) {};
\node[fill,circle,inner sep=1.5,draw,label= right:{$\Eta$}] (Np1p1) at (p1p1)  {};
\end{scope}
\path (D2.south) node[below]{$\Flag_\bul(\DDp{{\wop,\zop}})$};
\path (D2p.south) node[below]{$\Flag_\bul(\DDpx{{\wop,\zop}})$};
\end{tikzpicture}
\label{D2pics}\ee

\subsection{Semisimplicial sets}
Let $\Deltas$ denote the category whose objects are the finite totally-ordered sets 
\be [n] := \{0<1<\dots<n\}, \qquad n\in \ZZ_{\geq 0}, \nn\ee
and whose morphisms are the strictly order-preserving maps $\theta: [n] \to [N]$. Such maps are injections and exist only for $n\leq N$. They are generated by the \dfn{coface maps}
\be d^n_i : [n] \to [n+1];\quad j\mapsto \begin{cases} j & j<i \\ j+1 & j\geq i \end{cases}\nn\ee
for $i=0,1,\dots,n+1$ (together with the identity maps $\id_{[n]}$) for $n=0,1,2,\dots$.
One thinks of the category $\Deltas$ as follows:
\be 
\begin{tikzcd} 
\dots {[2]}
\rar[<-,shift left=-4pt]\rar[<-,shift left=4pt]\rar[<-] & 
{[1]} \rar[<-,shift left=2pt]\rar[<-,shift right=2pt]& {[0]}.
\end{tikzcd}
\nn\ee
A \dfn{semisimplicial object} $Z$ in a category $\C$ is a functor $Z: \Deltas^\op\to \C$.  
In particular, a \dfn{semisimplicial set} $S: \Deltas^\op \to \Set$ is a semisimplicial object $S$ in the category of sets. For each $n$, $S([n])$ is called the set of $n$-simplices of $S$. The maps $\del^n_i := S(d^n_i) : S([n+1]) \to S([n])$ are the \dfn{face maps} of $S$. 
\footnote{\dfn{Simplicial sets} are defined in the same way but with ``strictly order-preserving'', i.e. increasing, replaced by  ``weakly order-preserving'', i.e. non-decreasing, in the definition of $\Delta$. Simplicial sets have extra structure (degenerate simplices and degeneracy maps) which we shall not need here.}

In our present case, we have the functor 
\be \Flag(\Rect):\Deltas^\op \to \Set \nn\ee 
given on objects by $[n] \mapsto \Flag_n(\Rect)$ and on morphisms by $d^n_i \mapsto \del^n_i$. One may think of the semisimplicial structure on $\Flag_\bul(\Rect)$ as follows:
\be 
\begin{tikzcd} 
\Flag_2(\Rect) 
\rar[shift left=-4pt]\rar[shift left=4pt]\rar & 
\Flag_1(\Rect) \rar[shift left=2pt]\rar[shift right=2pt]& \Flag_0(\Rect).
\end{tikzcd}
\nn\ee

\subsection{The comma category $\Pre S$}
Given any semisimplicial set $S:\Deltas^\op \to \Set$, let $\Pre S$ denote the category whose objects are the simplices of $S$, and in which there is a unique morphism $f\to F$ if $f$ is a subsimplex of $F$, i.e. if $f= \phi(F)$ for some morphism $\phi$ of $S(\Deltas^\op)$, and no morphisms $f\to F$ otherwise. 
One can regard $\Pre S$ as a partially ordered set.
For example, the category $\Pre{\Flag_\bul(\DDp{{\wop,\zop}})}$ is the partially ordered set given by
\be\begin{tikzpicture}
\tdplotsetmaincoords{0}{180}
\begin{scope}[xshift=0,scale=6.5,tdplot_main_coords,local bounding box=G]
\coordinate (w=U)  at (1,0,0)  ;  
\coordinate (z=u) at (0,-1,0)  ;  
\coordinate (uU) at (1,-1,0)  ;  
\coordinate (p1p1) at (0,0,0) ;   
\coordinate ({w=U }{p1p1}) at (barycentric cs:p1p1=1,{w=U}=1);
\coordinate ({z=u }{p1p1}) at (barycentric cs:p1p1=1,{z=u}=1);
\coordinate ({uU  }{p1p1}) at (barycentric cs:p1p1=1,{uU}=1);
\coordinate ({uU  }{z=u }{p1p1}) at (barycentric cs:p1p1=1,{z=u}=1.3,{uU}=1);
\coordinate ({uU  }{w=U }{p1p1}) at (barycentric cs:p1p1=1,{w=U}=1.3,{uU}=1);
\coordinate ({uU  }{z=u }) at (barycentric cs:{z=u}=1,{uU}=1);
\coordinate ({uU  }{w=U }) at (barycentric cs:{w=U}=1,{uU}=1);

\node (NuU)  at (uU) {$\bigl((\wop,\zop)\bigr)$};
\node (Nw=U)  at(w=U) {$\bigl((w=\wop)\bigr)$};
\node (Nz=u) at (z=u) {$\bigl((z=\zop)\bigr)$};
\node (Np1p1) at (p1p1) {$\bigl(\Eta\bigr)$};
\node[rotate=-45] (N{uU  }{p1p1}) at ({uU  }{p1p1}) {$\bigl((\wop,\zop), \Eta\bigr)$};
\node (N{w=U }{p1p1}) at ({w=U }{p1p1}) {$\bigl((w=\wop) , \Eta\bigr)$};
\node[rotate=90] (N{z=u }{p1p1}) at ({z=u }{p1p1}) {$\bigl((z=\zop) , \Eta\bigr)$};
\node[rotate=90] (N{uU  }{w=U }) at ({uU  }{w=U }) {$\bigl((\wop,\zop) , (w=\wop)\bigr)$};
\node (N{uU  }{z=u }) at ({uU  }{z=u }) {$\bigl((\wop,\zop) , (z=\zop)\bigr)$};
\node[rotate=0] (N{uU  }{w=U }{p1p1}) at ({uU  }{w=U }{p1p1}) {$\bigl((\wop,\zop) , (w=\wop) , \Eta\bigr)$};
\node[rotate=0] (N{uU  }{z=u }{p1p1}) at ({uU  }{z=u }{p1p1}) {$\bigl((\wop,\zop) , (z=\zop) , \Eta\bigr)$};
\begin{scope}[every node/.style={midway,circle,inner sep = 0,draw,fill=white}]
\draw[->] (Np1p1) -- (N{z=u }{p1p1}); \draw[->] (Np1p1) -- (N{w=U }{p1p1}); 
\draw[->] (Nz=u) --(N{z=u }{p1p1}); \draw[->] (Nw=U) -- (N{w=U }{p1p1}); 
\draw[->] (Nz=u) --(N{uU  }{z=u }); \draw[->] (Nw=U) -- (N{uU  }{w=U }); 
\draw[->] (N{uU  }{w=U }) -- (N{uU  }{w=U }{p1p1});   
\draw[->] (N{uU  }{z=u }) -- (N{uU  }{z=u }{p1p1});   
\draw[->] (NuU) -- (N{uU  }{w=U });
\draw[->] (NuU) -- (N{uU  }{z=u });
\draw[->] (NuU) -- (N{uU  }{p1p1});
\draw[->] (Np1p1) -- (N{uU  }{p1p1});
\draw[->] (N{w=U }{p1p1}) -- (N{uU  }{w=U }{p1p1});   
\draw[->] (N{z=u }{p1p1}) -- (N{uU  }{z=u }{p1p1});   
\draw[->] (N{uU  }{p1p1}) -- (N{uU  }{z=u }{p1p1});   
\draw[->] (N{uU  }{p1p1}) -- (N{uU  }{w=U }{p1p1});   
\end{scope}
\end{scope}
\end{tikzpicture}
\label{flag2dp}\ee
There is another useful way of regarding the category $\Pre S$.
Recall that the Yoneda embedding
$\Deltas  \xrightarrow{\text{Yoneda}} [\Deltas^\op,\Set]$
embeds $\Deltas$ as a full subcategory of the category of semisimplicial sets, by sending $[n]\in \Deltas$ to  the \dfn{standard $n$-simplex} $\Delta^n := \Hom_{\Deltas}(-,[n])$.
We can then regard $\Pre S$ as the comma category associated to the diagram of functors
\be
\begin{tikzcd}       {}     & \mathbf 1\dar{S}  \\ 
\Deltas \rar{\text{Yoneda}} & {[\Deltas^\op,\Set]}
\end{tikzcd}
\nn\ee
That is: we can think that an object of $\Pre S$ is by definition a copy of the standard $n$-simplex $\Delta^n$ for some $n$ together with a map of semisimplicial sets $\Delta^n \to S$; and a morphism from $(\Delta^n\to S)$ to $(\Delta^N \to S)$ in $\Pre S$ is a morphism $\phi:\Deltas^n\to \Deltas^N$ of semisimplicial sets such that the diagram 
\be\begin{tikzcd} \Delta^n \arrow{rr}{\phi}\drar & & \Delta^N \dlar \\ & S &  \end{tikzcd}\nn\ee
commutes.
An advantage of this perspective on $\Pre S$ is that we get the functor
\be \Pre S \xrightarrow{\text{Forgetful}} \Deltas \xrightarrow{\text{Yoneda}} 
[\Deltas^\op, \Set] \nn\ee
which forgets about the maps of the simplices into $S$. We can think of this functor as a diagram in the category $[\Deltas^\op,\Set]$ of semisimplicial sets. (Its colimit is $S$ itself.) 

For example the image of $\Pre{\Flag_\bul(\DDp{{\wop,\zop}})}$ in the category of semisimplicial sets is the diagram
\be\begin{tikzpicture}
\tdplotsetmaincoords{0}{180}
\begin{scope}[xshift=0,scale=4,tdplot_main_coords,local bounding box=G]
\coordinate (w=U)  at (1,0,0)  ;  
\coordinate (z=u) at (0,-1,0)  ;  
\coordinate (uU) at (1,-1,0)  ;  
\coordinate (p1p1) at (0,0,0) ;   
\coordinate ({w=U }{p1p1}) at (barycentric cs:p1p1=1,{w=U}=1);
\coordinate ({z=u }{p1p1}) at (barycentric cs:p1p1=1,{z=u}=1);
\coordinate ({uU  }{p1p1}) at (barycentric cs:p1p1=1,{uU}=1);
\coordinate ({uU  }{z=u }{p1p1}) at (barycentric cs:p1p1=1,{z=u}=1.5,{uU}=1);
\coordinate ({uU  }{w=U }{p1p1}) at (barycentric cs:p1p1=1,{w=U}=1.5,{uU}=1);
\coordinate ({uU  }{z=u }) at (barycentric cs:{z=u}=1,{uU}=1);
\coordinate ({uU  }{w=U }) at (barycentric cs:{w=U}=1,{uU}=1);

\node (NuU)  at (uU) {$\Deltas^0$};
\node (Nw=U)  at(w=U) {$\Deltas^0$};
\node (Nz=u) at (z=u) {$\Deltas^0$};
\node (Np1p1) at (p1p1) {$\Deltas^0$};
\node (N{uU  }{p1p1}) at ({uU  }{p1p1}) {$\Deltas^1$};
\node (N{w=U }{p1p1}) at ({w=U }{p1p1}) {$\Deltas^1$};
\node (N{z=u }{p1p1}) at ({z=u }{p1p1}) {$\Deltas^1$};
\node (N{uU  }{w=U }) at ({uU  }{w=U }) {$\Deltas^1$};
\node (N{uU  }{z=u }) at ({uU  }{z=u }) {$\Deltas^1$};
\node (N{uU  }{w=U }{p1p1}) at ({uU  }{w=U }{p1p1}) {$\Deltas^2$};
\node (N{uU  }{z=u }{p1p1}) at ({uU  }{z=u }{p1p1}) {$\Deltas^2$};
\begin{scope}[every node/.style={midway,circle,inner sep = 0,draw,fill=white}]
\draw[->] (Np1p1) -- (N{z=u }{p1p1}); \draw[->] (Np1p1) -- (N{w=U }{p1p1}); 
\draw[->] (Nz=u) --(N{z=u }{p1p1}); \draw[->] (Nw=U) -- (N{w=U }{p1p1}); 
\draw[->] (Nz=u) --(N{uU  }{z=u }); \draw[->] (Nw=U) -- (N{uU  }{w=U }); 
\draw[->] (N{uU  }{w=U }) -- (N{uU  }{w=U }{p1p1});   
\draw[->] (N{uU  }{z=u }) -- (N{uU  }{z=u }{p1p1});   
\draw[->] (NuU) -- (N{uU  }{w=U });
\draw[->] (NuU) -- (N{uU  }{z=u });
\draw[->] (NuU) -- (N{uU  }{p1p1});
\draw[->] (Np1p1) -- (N{uU  }{p1p1});
\draw[->] (N{w=U }{p1p1}) -- (N{uU  }{w=U }{p1p1});   
\draw[->] (N{z=u }{p1p1}) -- (N{uU  }{z=u }{p1p1});   
\draw[->] (N{uU  }{p1p1}) -- (N{uU  }{z=u }{p1p1});   
\draw[->] (N{uU  }{p1p1}) -- (N{uU  }{w=U }{p1p1});   
\end{scope}
\end{scope}
\end{tikzpicture}
\label{flagd2pYoneda}\ee
Intuitively speaking, this tells us how to build $\Flag_\bul(\DDp{{\wop,\zop}})$ by sewing together standard simplices.


\subsection{Semicosimplicial algebras}
A \dfn{semicosimplicial object} $A$ in a category $\A$ is a functor $A: \Deltas\to \A$, i.e. an object of the functor category
\be [\Deltas, \A] .\nn\ee 
The relevant categories $\A$ for us are commutative algebras, Lie algebras, and the differential graded analogs of these. When it is not necessary to be more precise, we shall refer to objects of $\A$ as algebras and to  semicosimplicial objects in $\A$ as \dfn{semicosimplicial algebras}.

Thus, a semicosimplicial algebra $A$ has, by definition, an \emph{algebra} $A([n]) \in \A$ of $n$-cosimplices, for each $n\in \ZZ_{\geq 0}$, and coface morphisms $d^n_i : A([n]) \to A([n+1])$, $i=0,1,\dots,n+1$ between them:
\be 
\begin{tikzcd} 
\dots {A([2])}
\rar[<-,shift left=-4pt]\rar[<-,shift left=4pt]\rar[<-] & 
{A([1])} \rar[<-,shift left=2pt]\rar[<-,shift right=2pt]& {A([0])}.
\end{tikzcd}
\nn\ee

Actually, our semicosimplicial algebras will have a finer structure than this: they will arise from semisimplicial sets (of flags) by first attaching algebras to \emph{individual simplices} (i.e. individual flags) of a given semisimplicial set. It is useful to keep track of this structure, and to that end we make the following definition.

\subsection{$S$-algebras}\label{sec: S algebras} Given a semisimplicial set $S$, define the category of \dfn{$S$-objects in $\A$} or \dfn{$S$-algebras} to be the functor category 
\be [\Pre S,\A] ,\nn\ee 
i.e. the category whose objects are functors $\Pre S \to \A$ and whose morphisms are natural transformations between such functors. 
Given an $S$-object in $\A$ there is a natural way to recover a semicosimplicial object in $\A$. Namely, given a functor $A:\Pre S \to \A$, we may define a functor $\sca A:\Deltas \to \A$ as follows. We set
\be \left(\sca A\right)([n]) := \prod_{f\in S([n])} A_f \nn\ee
and if $\phi: [n] \to [N]$ in $\Deltas$ then the morphism $\left(\sca A\right)(\phi) : \left(\sca A\right)([n]) \to \left(\sca A\right)([N])$ is given by its restrictions to the factors $A_f$:
\be \left(\sca A\right)(\phi)|_{A_f} = \prod_{F\in S([N])\,:\, S(\phi)(F) = f} A(f\to F) \nn\ee
(Recall that $A(f\to F): A_f \to A_F$. A given $n$-simplex $f$ may belong to the boundary of infinitely many $N$-simplices of $S$, and for that reason we need the direct product $\prod$ rather than the direct sum $\bigoplus$. On the other hand, any given $N$-simplex $F$ has only finitely many boundary $n$-simplices, so there are only finitely many factors $A_f$ such that $f\to F$ is a morphism of $\Pre S$, and thus the restriction of $(\sca A)(\phi)\bigl(\sca A([n])\big)$ to the factor $A_F$ is a well-defined sum of finitely many terms.)

\begin{lem}\label{lem: sca1} 
This $\sca$ defines the action on objects of a functor
\be \sca = \sca_S : [\Pre S,\A] \to [\Deltas,\A] \nn\ee
from $S$-objects in $\A$ to semicosimplicial objects in $\A$. 
\qed\end{lem}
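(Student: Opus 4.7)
The plan is to verify, in turn, three things: that $\sca A$ as defined is genuinely a functor $\Deltas \to \A$ for each object $A$ of $[\Pre S,\A]$; that $\sca$ has a natural action on morphisms of $[\Pre S, \A]$; and that this action is itself functorial. Throughout, the essential observation is that for $\phi: [n] \to [N]$ in $\Deltas$ and any $F \in S([N])$, there is a \emph{unique} simplex $f := S(\phi)(F) \in S([n])$ such that $f \to F$ is a morphism of $\Pre S$, so the map $(\sca A)(\phi) : \prod_{f\in S([n])} A_f \to \prod_{F \in S([N])} A_F$ can be characterized cleanly by its components: for each target factor $F$, the component $\pi_F \circ (\sca A)(\phi)$ equals $A(S(\phi)(F) \to F) \circ \pi_{S(\phi)(F)}$.

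To check $\sca A$ preserves identities, note that $S(\id_{[n]}) = \id_{S([n])}$ and $A(\id_f) = \id_{A_f}$, whence $(\sca A)(\id_{[n]})$ is the identity on $\prod_f A_f$. To check composition, let $\phi: [n] \to [m]$ and $\psi: [m] \to [N]$ in $\Deltas$ and fix $F \in S([N])$. By contravariance of $S$, $S(\psi \circ \phi)(F) = S(\phi)(S(\psi)(F))$, and in $\Pre S$ the unique morphism $S(\phi)(S(\psi)(F)) \to F$ factors uniquely as $S(\phi)(S(\psi)(F)) \to S(\psi)(F) \to F$. Functoriality of $A$ then gives
\[
A(S(\phi)(S(\psi)(F)) \to F) = A(S(\psi)(F) \to F) \circ A(S(\phi)(S(\psi)(F)) \to S(\psi)(F)),
\]
and composing on the right with $\pi_{S(\phi)(S(\psi)(F))}$ yields exactly $\pi_F \circ (\sca A)(\psi) \circ (\sca A)(\phi)$ on one side and $\pi_F \circ (\sca A)(\psi\circ\phi)$ on the other. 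Since this holds for every $F$, $\sca A$ is a functor.

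For the action on morphisms, given a natural transformation $\alpha: A \Rightarrow B$ between functors $\Pre S \to \A$, define $\sca \alpha : \sca A \Rightarrow \sca B$ componentwise by $(\sca \alpha)_{[n]} := \prod_{f \in S([n])} \alpha_f$. Naturality of $\sca \alpha$ with respect to a morphism $\phi: [n] \to [N]$ in $\Deltas$ follows, target-factor by target-factor, from naturality of $\alpha$ applied to the morphism $S(\phi)(F) \to F$ of $\Pre S$. Finally, that $\sca$ preserves identities and composition of natural transformations is immediate from the componentwise definition. This is the entire content of the lemma; there is no serious obstacle, only a careful unpacking of how the projections, the face structure of $S$, and the action of $A$ on morphisms of $\Pre S$ assemble consistently, which is precisely what the uniqueness of the morphism $S(\phi)(F) \to F$ in $\Pre S$ ensures.
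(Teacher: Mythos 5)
Your proof is correct and follows essentially the same route as the paper's: define $\sca$ componentwise on natural transformations and verify everything by chasing the naturality squares attached to the morphisms $S(\phi)(F)\to F$ of $\Pre S$ (you additionally spell out that each $\sca A$ is itself a functor on $\Deltas$, which the paper leaves implicit in the definition). One small phrasing slip, harmless to the argument: $S(\phi)(F)$ need not be the \emph{unique} $n$-simplex admitting a morphism to $F$ in $\Pre S$, since an $N$-simplex generally has several $n$-dimensional faces; what you actually use, and all that is needed, is that $S(\phi)(F)$ is well defined and that $\Pre S$ is a poset, so the morphism $S(\phi)(F)\to F$ and its factorization through $S(\psi)(F)$ are unique.
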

\begin{rem}
This functor $\sca$ is left adjoint,
\be\begin{tikzcd}[baseline=0,ampersand replacement=\&]
{[\Deltas,\A]} \ar[r,shift left=5pt, 
"u^*",""{name=A, below}] \& {[\Pre S, \A]} \ar[l,shift left=5pt,
"\sca",""{name=B,above}] \arrow[phantom, from=A, to=B, "\dashv" rotate=-90]
\end{tikzcd},\nn\ee 
to the pull-back $u^*: B \mapsto u^*(B) = B\circ u$ of the forgetful functor $u:\Pre S \to \Deltas$.
\end{rem}

\subsection{Semisimplicial subsets and the restriction morphism}\label{sec: semisimplicial subsets}
The other fact we need concerns semisimplicial \emph{subsets} of $S$.
Suppose $R$ is a semisimplicial subset of $S$, by which we mean that for each $n$ the set of $n$-simplices of $R$ is a subset of the set of $n$-simplices of $S$, $R([n]) \subset S([n])$ and that the face maps respect these embeddings of sets. 
That is, the embedding maps $i([n]): R([n]) \into S([n])$ define a morphism of semisimplicial sets $i:R \to S$, i.e. they are the components of a natural transformation $i$
\be 
\begin{tikzcd}[ampersand replacement=\&]
\Deltas^\op  \rar[bend left=50, ""{name = U, below},"R"{above}]
           \rar[bend right=50,""{name = D, above},"S"{below}] \& \Set
\arrow[Rightarrow, from=U, to=D,"i"]
\end{tikzcd}\nn\ee 
between the functors $R$ and $S$. We get a functor $\Pre i:\Pre R \to \Pre S$ between the corresponding comma categories.
Given an $S$-object $A$ in $\A$ we have then also its restriction $A|_R$, an $R$-object in $\A$. Namely, 
$A|_R$ is the composition
\be \Pre R \to \Pre S \xrightarrow{A} \A.\nn\ee
This defines a functor $[\Pre S,\A] \to [\Pre R,\A]; A \mapsto A|_R$. (That is, $A|_R := (\Pre i)^* A$.)
We can then form two semicosimplicial objects in $\A$, namely $\sca_S A$ and $\sca_R A|_R$.

\begin{lem}\label{lem: sca2} 
There is a morphism of semicosimplicial objects in $\A$, 
\be \pi: \sca_S A \to \sca_R A|_R\nn\ee
given by
\be \pi|_{A_f} = \begin{cases} \id_{A_f} & f\in R \\ 0 &f\notin R. \end{cases}\nn\ee  
\qed\end{lem}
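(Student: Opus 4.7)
The plan is to define $\pi$ level-wise and then check naturality with respect to $\Deltas$. At each level $n$, take
\[
\pi([n]) : \prod_{f \in S([n])} A_f \longrightarrow \prod_{f \in R([n])} A_f
\]
to be the canonical projection induced by the inclusion $R([n]) \subset S([n])$; by construction this has exactly the restriction-to-factors formula given in the statement of the lemma. It is visibly a morphism in $\A$ for each $n$, since projections from direct products exist in all the categories $\A$ we use (commutative algebras, Lie algebras, and their dg versions). So the content of the lemma reduces entirely to the commutativity, for every $\phi: [n] \to [N]$ in $\Deltas$, of the square
\[
\begin{tikzcd}
\prod_{f \in S([n])} A_f \rar["(\sca_S A)(\phi)"] \dar["\pi([n])"'] & \prod_{F \in S([N])} A_F \dar["\pi([N])"] \\
\prod_{f \in R([n])} A_f \rar["(\sca_R A|_R)(\phi)"'] & \prod_{F \in R([N])} A_F.
\end{tikzcd}
\]

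The key input is the defining property of $R$ as a semisimplicial subset of $S$, namely that the face map $R(\phi)$ is the restriction of $S(\phi)$. The useful consequence is: if $F \in R([N])$ then $S(\phi)(F) \in R([n])$; equivalently, no $F \in R([N])$ lies above any $f \in S([n]) \setminus R([n])$.

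I would then verify the square by fixing $f \in S([n])$ and $F \in R([N])$ and comparing the $A_F$-components of the two composites restricted to the factor $A_f$. If $f \notin R([n])$, the left-down path is zero because $\pi([n])$ kills $A_f$, and the up-right path is also zero because the $A_F$-component of $(\sca_S A)(\phi)|_{A_f}$ equals $A(f\to F)$ when $S(\phi)(F) = f$ and zero otherwise, while no such $F \in R([N])$ exists by the key observation. If $f \in R([n])$, both composites reduce to $A(f \to F) = A|_R(f \to F)$ precisely when $S(\phi)(F) = R(\phi)(F) = f$, and vanish otherwise. This exhausts all cases.

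I expect no real obstacle: the statement is formal once one unwinds the construction of $\sca$ in \cref{lem: sca1} together with the meaning of ``semisimplicial subset''. The only point meriting mild care is that $\sca$ uses direct products rather than coproducts, so the formula for $(\sca_S A)(\phi)|_{A_f}$ has components indexed by the (possibly infinite) preimage set $\{F \in S([N]) : S(\phi)(F) = f\}$; however, this indexing enters identically on the top and bottom rows of the square, so it causes no trouble.
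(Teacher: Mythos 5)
Your proposal is correct and follows essentially the same route as the paper: define $\pi$ level-wise as the projection onto the factors indexed by $R$, and verify naturality component-by-component using the fact that, since $R$ is a semisimplicial subset, $F\in R([N])$ forces $S(\phi)(F)\in R([n])$, so the only potentially troublesome case ($f\notin R$, $F\in R$) never contributes. The paper's argument is the same case analysis, merely organized by whether $F$ lies in $R$ rather than whether $f$ does.
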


\begin{rem}
Note that while we also have the obvious embedding maps $(\Pi_RA|_R)([n]) \into \Pi_S A([n])$, these do not in general define a morphism of semicosimplicial algebras $\Pi_RA|_R \to \Pi_SA$. Indeed, we get failures of naturality whenever $f\in S([n])$ and $F\in S([N])$ are such that $f \in R([n])$ and yet $F\notin R([N])$.
\end{rem}

\subsection{First example}\label{sec: ADdef}
We now turn to an example which will play a central role. Let $\CAlgemb$ denote the category whose objects are commutative ($\CC$-)algebras and whose morphisms are embeddings of commutative algebras. 

Let $\DD := \DDp{{0,0}}$ be the formal polydisc at the point $(0,0)$. 
To give a $\Flag_\bul(\DD)$-object in $\CAlgemb$, i.e. a functor 
\be \Pre{\Flag_\bul(\DD)} \to \CAlgemb, \nn\ee
is by definition to give a certain commuting diagram of commutative algebras and embeddings between them, cf. \cref{flag2dp} and \cref{flagd2pYoneda}. Let us define such an algebra, $\AD$, as follows.
\be
\AD :=
\begin{tikzpicture}[baseline =100]
\tdplotsetmaincoords{0}{180}
\begin{scope}[xshift=0,scale=7,tdplot_main_coords,local bounding box=G]
\coordinate (w=U)  at (1,0,0)  ;  
\coordinate (z=u) at (0,-1,0)  ;  
\coordinate (uU) at (1,-1,0)  ;  
\coordinate (p1p1) at (0,0,0) ;   
\coordinate ({w=U }{p1p1}) at (barycentric cs:p1p1=1,{w=U}=1);
\coordinate ({z=u }{p1p1}) at (barycentric cs:p1p1=1,{z=u}=1);
\coordinate ({uU  }{p1p1}) at (barycentric cs:p1p1=1,{uU}=1);
\coordinate ({uU  }{z=u }{p1p1}) at (barycentric cs:p1p1=1,{z=u}=1.3,{uU}=1);
\coordinate ({uU  }{w=U }{p1p1}) at (barycentric cs:p1p1=1,{w=U}=1.3,{uU}=1);
\coordinate ({uU  }{z=u }) at (barycentric cs:{z=u}=1,{uU}=1);
\coordinate ({uU  }{w=U }) at (barycentric cs:{w=U}=1,{uU}=1);

\node (NuU)  at (uU) {$\CC[[w]]\oxC\CC[[z]]$};
\node (Nw=U)  at(w=U) {$\CC[[w]]\oxC\CC((z))$};
\node (Nz=u) at (z=u) {$\CC((w))\oxC\CC[[z]]$};
\node[rotate=0] (Np1p1) at (p1p1) {$\RRR$};
\node[rotate=-45] (N{uU  }{p1p1}) at ({uU  }{p1p1}) {$\RRR$};
\node (N{w=U }{p1p1}) at ({w=U }{p1p1}) {$\RRR$};
\node (N{z=u }{p1p1}) at ({z=u }{p1p1}) {$\RRR$};
\node (N{uU  }{w=U }) at ({uU  }{w=U }) {$\CC[[w]]\oxC\CC((z)) $};
\node (N{uU  }{z=u }) at ({uU  }{z=u }) {$\CC((w))\oxC\CC[[z]] $};
\node (N{uU  }{w=U }{p1p1}) at ({uU  }{w=U }{p1p1}) {$\RRR$};
\node (N{uU  }{z=u }{p1p1}) at ({uU  }{z=u }{p1p1}) {$\RRR$};
\begin{scope}[every node/.style={midway,circle,inner sep = 0,draw,fill=white}]
\draw[->] (Np1p1) -- (N{z=u }{p1p1}); \draw[->] (Np1p1) -- (N{w=U }{p1p1}); 
\draw[->] (Nz=u) --(N{z=u }{p1p1}); \draw[->] (Nw=U) -- (N{w=U }{p1p1}); 
\draw[->] (Nz=u) --(N{uU  }{z=u }); \draw[->] (Nw=U) -- (N{uU  }{w=U }); 
\draw[->] (N{uU  }{w=U }) -- (N{uU  }{w=U }{p1p1});   
\draw[->] (N{uU  }{z=u }) -- (N{uU  }{z=u }{p1p1});   
\draw[->] (NuU) -- (N{uU  }{w=U });
\draw[->] (NuU) -- (N{uU  }{z=u });
\draw[->] (NuU) -- (N{uU  }{p1p1});
\draw[->] (Np1p1) -- (N{uU  }{p1p1});
\draw[->] (N{w=U }{p1p1}) -- (N{uU  }{w=U }{p1p1});   
\draw[->] (N{z=u }{p1p1}) -- (N{uU  }{z=u }{p1p1});   
\draw[->] (N{uU  }{p1p1}) -- (N{uU  }{z=u }{p1p1});   
\draw[->] (N{uU  }{p1p1}) -- (N{uU  }{w=U }{p1p1});   
\end{scope}
\end{scope}
\end{tikzpicture}
\label{ADdef}\ee
where $\RRR$ is the commutative algebra
\be \RRR := \CC((z)) \oxC \CC((w)). \nn\ee
That is, $\AD(f) := \RRR$ for all simplices $f$ with only the following exceptions:
\begin{align}
\AD( (0,0) ) &:= \CC[[w]]\oxC\CC[[z]] \nn\\
\AD( (w=0) ) := \AD( (0,0) , (w=0) ) &:= \CC[[w]]\oxC\CC((z)) \nn\\
\AD( (z=0) ) := \AD( (0,0) , (z=0) ) &:= \CC((w))\oxC\CC[[z]] .\nn
\end{align}
Let $\ADx$ denote the restriction, in the sense of \cref{lem: sca2}, of the $\Flag_\bul(\DD)$-algebra $\AD$ to a $\Flag_\bul(\DDx)$-algebra:
\be 
\ADx := \AD|_{\Flag_\bul(\DDx)}.\nn\ee

\subsection{The associated cochain complex of a cosimplicial algebra}\label{sec: cochains}
Let $\CCh(\A)$ denote the category of cochain complexes in $\A$. 
There is a functor 
\be \C: [\Deltas,\A] \to \CCh(\A);\qquad A \mapsto (\C^\bul(A),\dd)\nn\ee 
which assigns to any semicosimplicial object $A$ in $\A$ a cochain complex $(\C^\bul(A),\dd)$ concentrated in nonnegative degrees, its \dfn{associated 
complex}. (See e.g. \cite[\S8.2.1 and \S8.4.3]{Weibel}.) For each $n\geq 0$ the space $\C^n(A)$ is a copy of $A([n])$ put into cohomological degree $n$,
\be \C^n(A) := \downby n A([n]) \nn\ee
and the differential $\dd_\C = \sum_n \dd_\C^n$, $\dd_\C^n : \C^n(A) \to \C^{n+1}(A)$, is given by the alternating sum of the coface maps,
\be \dd_\C^n = \down \circ \left( A(d^n_0) - A(d^n_1) + \dots +(-1)^{n+1} A(d^n_{n+1}) \right).\nn\ee
Here we use the standard notation
\be \upby n : \CCh(\A) \to \CCh(\A); \quad \upby n V := [n] \ox V\nn\ee
where $[n]$ is the one-dimensional graded vector space concentrated in cohomological degree $-n$. In particular if $V$ is concentrated in degree 0 then $\downby n V$ is concentrated in degree $n$.

\subsection{Homotopy equivalences and deformation retracts}\label{sec: retracts}
Recall that a map $f: V \to W$ of cochain complexes $V,W\in \CCh(\A)$ is a \dfn{homotopy equivalence} if it is invertible up to homotopies, in the sense that there exists a map of cochain complexes $g: W \to V$ in the opposite direction such that 
\be g\circ f \simeq \id_V \qquad\text{and}\qquad f\circ g \simeq \id_W. \nn\ee
Here we used $\simeq$ to indicate that two cochain maps are \dfn{homotopic}, meaning that there exists a cochain homotopy between them. That is, in this case, there are maps in $\A$ 
\be h : V^n \to V^{n-1} \qquad\text{and}\qquad k : W^n \to W^{n-1} \nn\ee
for each $n$, such that 
\begin{align} 
g\circ f - \id_V &= [h, \dd_V] := h \circ \dd_V + \dd_V \circ h \nn\\
f\circ g - \id_W &= [k, \dd_W] := k \circ \dd_W + \dd_W \circ k. \nn
\end{align}
This situation is often denoted
\be\begin{tikzcd} 
\ar[loop left,"h"] V \rar[shift left]{f}  & \lar[shift left]{g} W \ar[loop right,"k"] 
\end{tikzcd}.\nn\ee
As a special case, if $g\circ f = \id_V$ holds exactly then $V$ is a \dfn{deformation retract} of $W$:
\be\begin{tikzcd} 
V \rar[shift left]{f}  & \lar[shift left]{g} W \ar[loop right,"h"] 
\end{tikzcd}\nn\ee
See for example e.g. \cite[\S1.5.5]{LodayVallette}. 

Every homotopy equivalence is a quasi-isomorphism of cochain complexes, i.e. it gives rise to an isomorphism in cohomology. For cochain complexes in vector spaces, i.e. for dg vector spaces, the converse is also true. To see this, it is enough to note $H^\bul(V)$ is always a deformation retract of $V$. See e.g. \cite[\S9.4.3]{LodayVallette}. (Recall we work over $\CC$, here and throughout.)

\section{The rectilinear adelic complex}\label{sec: rect adel}
We are now in a position to define the complex which will model the derived sections of the sheaf $\O$ on rectilinear space $\Rect$.  

The main result of this section is \cref{thm: flasque res}. Let us remark that the subsequent sections of the paper are self-contained and can be read independently of this section.

Recall that $\Flag_\bul(U)$ is the semisimplicial set of flags in a subset $U\subset \Rect$ of rectilinear space, as in \cref{sec: flags}. There is manifestly an embedding of semisimplicial sets $\Flag_\bul(U)\into \Flag_\bul(V)$, cf. \cref{sec: semisimplicial subsets}, whenever $U\subset V$, and these embeddings compose correctly, i.e. we get a functor
\be \Flag_\bul(-) : \Open_{\Rect} \to [\Deltas^\op,\Set]_{/\Flag_\bul(\Rect)}^{\emb}; 
\quad U \mapsto \Flag_\bul(U)\nn\ee 
to the category of embedded semisimplicial subsets of $\Flag_\bul(\Rect)$.\footnote{Here, given any semisimplicial set $S$, we let $[\Deltas^\op,\Set]_{/S}^{\emb}$ denote the category of its embedded semisimplicial subsets, i.e. the category whose objects are tuples $(R, R \into S)$ consisting of a semisimplicial set $R$ and an embedding of $R$ into $S$, and whose morphisms $(R,R\into S)\to(T,T\into S)$ are morphisms $R\to T$ such that the diagram 
$\begin{tikzcd}[column sep = tiny,row sep=tiny,ampersand replacement=\&] R \arrow{rr}\drar[hook] \& \& T \dlar[hook] \\ \& S \&  \end{tikzcd}$
commutes.}
We recognize the (inverse) limit 
\be \invlim_{U \ni (a,b)} \Flag_\bul(U) = \Flag_\bul(\DDp{{a,b}})\nn\ee 
as the semisimplicial set $\Flag_\bul(\DDp{{a,b}})$ of flags in the formal polydisc at the point $(a,b)\in \CC^2$. 

Now let us define a $\Flag_\bul(\Rect)$-object $\AAA$ in commutative algebras, i.e. a functor
\be \AAA: \Pre{\Flag_\bul(\Rect)} \to \CAlg \nn\ee 
as follows: every flag, i.e. every simplex, is sent to $\CC(w) \oxC \CC(z)$ with only the following exceptions:
\begin{subequations}\label{def: AA}
\be \AAA( (a,b) ) := S_a^{-1}\CC[w]\oxC  S_b^{-1}\CC[z] \ee
\be \AAA( (a,b) , (w=a) ) := \AAA( ( w=a) ) := S_a^{-1}\CC[w] \oxC \CC(z)\label{AAa}\ee
\be \AAA( (a,b) , (z=b) ) := \AAA( ( z=b) ) := \CC(w)  \oxC S_b^{-1}\CC[z] \label{AAb}\ee
\end{subequations}
for all $a,b\in \CC$. By restriction, we obtain also a $\Flag_\bul(U)$-object $\AAA_U$ in commutative algebras for every subset (open or not) $U\subset \Rect$.

At this point, we would like to apply the functor $\sca$ of \cref{sec: S algebras} to obtain a semicosimplicial algebra $\sca \AAA_U$ for each open $U$. There is however a crucial subtlety. To get the resolution we seek of the sheaf $\O$, it is necessary to modify the definitions of the algebras of $1$-simplices and $2$-simplices:
\begin{align} 
\sca'\AAA_U([0]) &:= \prod_{F\in \Flag_0(U)} \AAA( F ) \nn\\
\sca'\AAA_U([1]) &:= \biggl\{ \mathbf x = (x_{F}) \in\!\!\!\!\!\! \prod_{F\in \Flag_1(U)} \AAA( F ) : \label{adelic products}\\
&\qquad\quad  \parbox{\textwidth}{for all but finitely many flags of the form $F=(\{\pt\} \subset (line) )$ 
, \\ $\phantom{+}\qquad x_{F}$ actually belongs to $\AAA( \{\pt\} )$, and \\ for all but finitely many flags of the form $F=((line) \subset \Eta )$, \\ $\phantom{+}\qquad x_{F}$ actually belongs to $\AAA( (line) )$ } \hspace{-.2\textwidth}\biggr\} \nn\\
\sca'\AAA_U([2]) &:= \biggl\{ \mathbf x = (x_{F}) \in\!\!\!\!\!\! \prod_{F\in \Flag_2(U)} \AAA( F ) : \nn\\
&\qquad\qquad  \parbox{.7\textwidth}{for all but finitely many flags $F=(\{\pt\} \subset (line) \subset \Eta )$, \\ $\phantom{+}\qquad x_{F}$ actually belongs to $\AAA( \{\pt\} )$.} \biggr\} \nn
\end{align}
We obtain a sheaf in semicosimplicial algebras  $U\mapsto \sca'\AAA_U$. The restriction maps $\sca'\AAA_V \to \sca'\AAA_U$ for $U\subset V$ just consist in throwing away some terms in the products and are  manifestly surjective. Thus this sheaf is flasque. On taking the associated cochain complexes we obtain a flasque sheaf in cochain complexes in commutative algebras 
\be U\mapsto \C^\bul(\sca'\AAA_U). \nn\ee
\begin{thm}\label{thm: flasque res}
This sheaf $U\mapsto \C^\bul(\sca'\AAA_U)$ on $\Rect$ is a flasque resolution of $\O$. 

Thus, $\C^\bul(\sca'\AAA)$ is a model for the derived sections of $\O$:
\be R\Gamma^\bul(U,\O) \simeq \C^\bul(\sca'\AAA_U) ,\nn\ee
for each open $U\subset \Rect$.
\end{thm}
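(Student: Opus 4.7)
The theorem packages two claims: flasqueness of each $\C^n(\sca'\AAA)$, and exactness of the augmented complex of sheaves
\be 0 \to \O \to \C^0(\sca'\AAA) \to \C^1(\sca'\AAA) \to \C^2(\sca'\AAA) \to 0. \nn\ee
Flasqueness is essentially already in hand from the discussion preceding the theorem: for $V\subset U$ the restriction map $\sca'\AAA_U([n]) \to \sca'\AAA_V([n])$ simply discards tuple entries on flags outside $V$, and any element of $\sca'\AAA_V([n])$ admits a lift back to $\sca'\AAA_U([n])$ by extension by zero. Zero extension introduces no new nontrivial entries, so the ``all but finitely many'' conditions of \cref{adelic products} are automatically preserved.

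For the resolution claim, I would first define the augmentation $\epsilon : \O(U) \to \C^0(\sca'\AAA_U)$ sending $f$ to its tuple of germs $(f_p)_{p\in \Flag_0(U)}$, via the canonical maps $\O(U) \to \O_p = \AAA(p)$. Exactness of the augmented complex would then be checked stalk-by-stalk at each type of point of $\Rect$. For $p\in \Rect$ let $X_p := \{q\in \Rect : p\in \ol{\{q\}}\}$ denote the set of its generizations. Every point of $\Rect$ has at most four generizations: for $p$ closed, $X_p = \{p, (w=a), (z=b), \Eta\}$ and the semisimplicial set $\Flag_\bul(X_p)$ is isomorphic to $\Flag_\bul(\DDp{{a,b}})$ as depicted in \cref{D2pics}; for $p$ a line point, $|X_p|=2$ and $\Flag_\bul(X_p)$ has two vertices and one edge; for $p=\Eta$, $|X_p|=1$. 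The plan is to argue that the stalk complex at $p$ is quasi-isomorphic to the corresponding finite ``local'' complex indexed by $\Flag_\bul(X_p)$, each $F\in \Flag_n(X_p)$ contributing a factor $\AAA(F)$.

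At the generic-point and line-point stalks the resulting local complexes are, respectively, the trivially exact $0\to \O_\Eta\to \AAA(\Eta)\to 0$ (using $\O_\Eta = \AAA(\Eta) = \CC(w)\oxC \CC(z)$) and the three-term Mayer--Vietoris sequence
\be 0\to \O_{(w=a)}\to \AAA((w=a))\oplus\AAA(\Eta) \to \AAA((w=a),\Eta) \to 0, \nn\ee
whose exactness is elementary since $\O_{(w=a)} = \AAA((w=a))$ embeds into $\AAA(\Eta)$. The main obstacle is the closed-point case $p=(a,b)$, where two things must be verified: (i) exactness of the finite $11$-term complex indexed by the $4$ vertices, $5$ edges, and $2$ two-simplices of $\Flag_\bul(\DDp{{a,b}})$; and (ii) that the contributions at the stalk from flags involving far-away points or lines (which persist in any open neighborhood of $p$, since any open $U\ni p$ contains infinitely many closed points and lines not in $X_p$) assemble into an acyclic subcomplex and can be discarded without changing cohomology. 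Step (i) is a direct Mayer--Vietoris-style computation resting on the identity
\be \O_{(a,b)} = \AAA((w=a))\cap \AAA((z=b)) \subset \AAA(\Eta), \nn\ee
together with an explicit construction of preimages: e.g. a $\C^1$-cocycle $(y_{pw},y_{pz},y_{p\Eta},y_{w\Eta},y_{z\Eta})$ with $y_{w\Eta}=y_{p\Eta}-y_{pw}$ and $y_{z\Eta}=y_{p\Eta}-y_{pz}$ has preimage $(0,y_{pw},y_{pz},y_{p\Eta})\in \C^0$, the required subalgebra memberships $y_{pw}\in \AAA((w=a))$, $y_{pz}\in \AAA((z=b))$ being built into the definition of $\C^1$. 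Step (ii) is where the ``all but finitely many'' finite-support conditions of \cref{adelic products} do the essential work, forcing far-away contributions into strictly smaller subalgebras that pair off acyclically; this is the crux of the argument and the one requiring the most care.
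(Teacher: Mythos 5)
There is a genuine gap. The overall shape of your plan (flasqueness plus a stalkwise exactness check) is reasonable, and your step (i) — exactness of the finite $11$-term complex attached to $\Flag_\bul(\DDp{{a,b}})$, and the easy computations at line points and at $\Eta$ — is routine and essentially correct. But the mathematical content of the theorem is precisely your step (ii), and you have only asserted it. The stalk of $\C^\bul(\sca'\AAA)$ at \emph{any} point $p$ (including line points and $\Eta$, not just closed points: every open set contains all but finitely many closed points and lines, so the far-away flags never go away in the colimit) is a colimit of huge restricted products, and the claim that the kernel of the projection onto the finite complex indexed by $\Flag_\bul(X_p)$ is acyclic does not follow from any formal "pairing off". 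A priori a degree-one class could be supported on infinitely many flags through far-away points; to rule this out one must actually construct a contracting homotopy (or an explicit chain-level splitting) on that complement, and verify that it is compatible with the finite-support conditions of \cref{adelic products} — i.e.\ that the would-be homotopy produces only finitely many nonzero entries on each flag. Without that construction the proof is incomplete, and this is exactly where the difficulty lies.

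For comparison, the paper's proof avoids computing stalks as colimits of the cochain complexes directly. It observes that it suffices to exhibit, inside any given neighbourhood of $p$, a smaller open $V$ of the special form $\Rect$ minus finitely many lines (no isolated deleted points) on which $\C^\bul(\sca'\AAA_V)$ resolves $\O(V)$; such $V$ are cofinal, and filtered colimits are exact. Over such a $V$ it passes to the homotopy-equivalent Thom--Whitney model and builds an explicit deformation retract of $\Th^\bul(\sca'\AAA_V)$ onto $\O(V)$: one decomposes every $\CC(w)\oxC\CC(z)$-valued form by partial fractions into its $\O(V)$-part plus polar parts at the various points $c$ and $d$, and contracts each polar part to the boundary stratum where it is forced to vanish, and the regular part to the vertex $\Eta$, by simplexwise integration; the restricted-product conditions are exactly what guarantee that these infinite sums of homotopies are finite on each simplex. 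If you want to complete your route, you would need to carry out an analogous construction for the complement subcomplex in the associated cochain complex (or reduce to the paper's cofinal-$V$ argument), rather than deferring it.
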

\begin{proof} 
The proof is given in \cref{sec: flasque resolution proof}. 
\end{proof}

The Thom-Whitney construction, \cref{sec: ThomWhitney}, provides another model, $\Th^\bul(\sca'\AAA)$, which comes equipped with the structure of a dg commutative algebra. 

For completeness, we note also the following. 
Let $\AD$ be the $\Flag(\DD)$-algebra from \cref{sec: ADdef}. It restricts to a $\Flag(U)$-algebra $\AU$ for each open $U\subset \DD$ and this defines a sheaf $U\mapsto \sca\AU$ in semicosimplicial commutative algebras on $\DD$.  
\begin{thm} 
This sheaf $U\mapsto \C^\bul(\sca\AU)$ on $\DD$ is a flasque resolution of $\hat\O$. 

Thus, $\C^\bul(\sca\AAA)$ is a model for the derived sections of $\hat\O$:
\be R\Gamma^\bul(U,\hat\O) \simeq \C^\bul(\sca\AU) ,\nn\ee
for each open $U\subset \DD$.\qed
\end{thm}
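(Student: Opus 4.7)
The proof parallels that of \cref{thm: flasque res} but is considerably simpler, because $\DD$ has only four points and six open sets, every $\Flag_n(U)$ is finite, and no finiteness modification of $\sca$ analogous to \eqref{adelic products} is required. For each $n\geq 0$ the presheaf $\mc F^n : U \mapsto (\sca\AU)([n]) = \prod_{f\in\Flag_n(U)} \AD(f)$ is flasque, since for any inclusion $U\subseteq V$ of opens the restriction is the canonical projection onto factors indexed by $\Flag_n(U)\subseteq \Flag_n(V)$. It therefore remains to verify that $0\to \hat\O \to \mc F^0 \to \mc F^1 \to \mc F^2 \to 0$ is exact on stalks at each of the four points of $\DD$.

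Three of these checks are immediate. At $\Eta$, whose smallest neighbourhood is $\{\Eta\}$, the complex reduces to $\RRR = \hat\O_\Eta$ concentrated in degree zero. At a generic line point $(w=0)$, whose smallest neighbourhood is $\{(w=0),\Eta\}$, the flag set has two $0$-simplices and one $1$-simplex, and the resulting two-term complex
\be
\CC[[w]]\oxC\CC((z)) \oplus \RRR \xrightarrow{(a,c)\mapsto a-c} \RRR \nn
\ee
has kernel $\CC[[w]]\oxC\CC((z)) = \hat\O_{(w=0)}$ and is surjective, since $c$ alone varies over all of $\RRR$. The case of $(z=0)$ is symmetric.

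The nontrivial stalk is at the closed point $(0,0)$, whose only open neighbourhood is $\DD$, so the stalk is the full complex $\C^\bul(\sca\AD)$ assembled from the four $0$-simplices, five $1$-simplices and two $2$-simplices of $\Flag_\bul(\DD)$ depicted in \eqref{flag2dp}. I would analyse it via the short exact sequence of cochain complexes
\be
0 \to K^\bul \to \C^\bul(\sca\AD) \xrightarrow{\pi} \C^\bul(\sca\ADx) \to 0 \nn
\ee
supplied by \cref{lem: sca2}, where $K^\bul$ is supported on the six flags of $\Flag_\bul(\DD)$ containing $(0,0)$. A direct finite computation yields $H^0(K^\bul) = H^1(K^\bul) = 0$ and $H^2(K^\bul) \cong w^{-1}z^{-1}\CC[w^{-1},z^{-1}]$, while $\C^\bul(\sca\ADx)$ has cohomology $\bigl(\CC[[w]]\oxC\CC[[z]],\, w^{-1}z^{-1}\CC[w^{-1},z^{-1}],\, 0\bigr)$. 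The crux of the argument is to check that the connecting homomorphism $H^1(\C^\bul(\sca\ADx)) \to H^2(K^\bul)$ of the associated long exact sequence is the identity on $w^{-1}z^{-1}\CC[w^{-1},z^{-1}]$; this is a short snake-lemma calculation tracing through the two $2$-simplex differentials at $(0,0)\subset(w=0)\subset\Eta$ and $(0,0)\subset(z=0)\subset\Eta$, each of which lifts exactly the obstruction class of the corresponding $1$-simplex in the $\DDx$-complex. Once this cancellation is identified, the long exact sequence forces $H^0(\C^\bul(\sca\AD)) = \CC[[w]]\oxC\CC[[z]] = \hat\O_{(0,0)}$ and $H^{\geq 1}(\C^\bul(\sca\AD)) = 0$, as required. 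The main technical task is thus this identification of the connecting map; everything else is mechanical bookkeeping.
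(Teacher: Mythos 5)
Your proposal is correct, and I could not find a gap: the flasqueness argument, the stalk identifications at $\Eta$, $(w=0)$, $(z=0)$ via minimal neighbourhoods, and the closed-point computation all check out. In particular the kernel complex $K^\bul$ supported on the six flags through $(0,0)$ does have $H^0=H^1=0$ and $H^2\cong w^{-1}z^{-1}\CC[w^{-1},z^{-1}]$, and tracing the snake map on a lift $(u,v,0,0,0)$ of a degree-one cocycle of $\C^\bul(\sca\ADx)$ shows the connecting homomorphism is exactly the identification of $\RRR\oplus\RRR$ modulo $\bigl\{(u,v): u-v\in \CC[[w]]\oxC\CC((z))+\CC((w))\oxC\CC[[z]]\bigr\}$ on both sides, hence an isomorphism, which kills $H^{\geq 1}$ of the full stalk complex and leaves $H^0=\CC[[w]]\oxC\CC[[z]]=\hat\O_{(0,0)}$.

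Your route is, however, genuinely different from the paper's. The paper states this theorem without proof, the intended argument being the analogue of the proof of \cref{thm: flasque res} in \cref{sec: flasque resolution proof}: there one passes to the Thom--Whitney model and builds explicit contracting homotopies (retractions to vertices and boundary edges), and the needed stalk cohomologies for $\DD$ and $\DDx$ are in effect already supplied by \cref{prop: disc retract}, \cref{prop: discx retract} and \cref{prop: H1 retract} (cf. \cref{cor: cohomology}). You instead work directly with the unnormalized complexes $\C^\bul$, exploit that every point of $\DD$ has a minimal open neighbourhood so stalks are literal values, and resolve the only nontrivial stalk by the short exact sequence of \cref{lem: sca2} and its long exact sequence. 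What your approach buys is elementarity: everything is finite linear algebra, no homotopies or integration over simplices, and no appeal to the quasi-isomorphism $\C^\bul\simeq\Th^\bul$. What the paper's approach buys is explicit homotopy data that is reused later (the retracts underlie the Manin-triple and PBW statements) and a method that survives in the global setting, where the flag sets are infinite and the restricted products $\sca'$ force the more careful bookkeeping that your finite argument happily avoids. Two small remarks: the closed-point stalk has only eleven factors, so it could also be computed directly without the long exact sequence; and rather than recomputing $H^\bul(\C^\bul(\sca\ADx))$ you could cite \cref{cor: cohomology} together with the homotopy equivalence of the associated and Thom--Whitney complexes, though your direct computation is perfectly sound.
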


\subsection{Remark on completed local rings}\label{rem: completions}
In place of the definition \cref{def: AA} of $\AAA$, we could make the following alternative choice:
\begin{subequations}\label{def: AAhat}
\be \hat\AAA( (a,b) ) := \CC[[w-a]] \oxC  \CC[[z-b]] \ee
\be \hat\AAA( (a,b) , (w=a) ) := \CC[[w-a]] \oxC \CC((z-b)) \qquad \hat\AAA( ( w=a) ) := \CC[[w-a]] \oxC \CC(z)\nn\ee
\be \hat\AAA( (a,b) , (z=b) ) := \CC((w-a)) \oxC \CC[[z-b]] \qquad \hat\AAA( ( z=b) ) := \CC(w)  \oxC \CC[[z-b]] \nn\ee
\be \hat\AAA( (a,b) , (z=b),\Eta ) := \CC((w-a)) \oxC \CC((z-b)) \qquad \hat\AAA( ( z=b) ,\Eta ) := \CC(w)  \oxC \CC((z-b)) \nn\ee
\be \hat\AAA( (a,b) , (w=a),\Eta ) := \CC((w-a)) \oxC \CC((z-b)) \qquad \hat\AAA( ( w=a) ,\Eta) := \CC((w-a)) \oxC \CC(z)\nn\ee
\be \hat\AAA( \Eta ) := \CC(w) \oxC  \CC(z) \ee
\end{subequations}
(The proof of \cref{thm: flasque res} in \cref{sec: flasque resolution proof} goes through with an additional step: one checks that $\C\sca'\AAA_V\simeq \C\sca'\hat\AAA_V$ for suitably small open sets $V$.)

This choice of $\AAA$ vs. $\hat\AAA$ has an analog in the familiar case of the affine line $\AA^1$, where, at the level of global sections, one has the usual short exact sequence,
\be 0 \to \CC[x] \to \CC(x) \oplus \prod_{a\in\CC} \CC[[x-a]] \to \prod_{a\in \CC}' \CC((x-a)) \to 0 ,\nn\ee
but also the following one,
\be 0 \to \CC[x] \to \CC(x) \oplus \prod_{a\in\CC} S_a^{-1} \CC[x] \to \prod_{a\in \CC}' \CC(x) \to 0 .\nn\ee

\subsection{Global sections as the homotopy kernel.}
In the familiar case of adeles for complex dimension one, we may also consider puncturing the affine line $\AA^1$ at only a prescribed finite collection of closed points $\{a_1,\dots,a_N\}$ of our choice, and we get the following short exact sequence
\be 0 \to \CC[x] \to \CC(x)_{a_1,\dots,a_N} \oplus \bigoplus_{i=1}^N \CC[[x-a_i]] \to \bigoplus_{i=1}^N \CC((x-a_i)) \to 0 \label{line adeles}\ee
which, more conceptually, is
\be 0 \to \Gamma(\AA^1,\O) \to \Gamma(\AA^1\setminus \{a_1,\dots,a_N\},\O) \oplus \bigoplus_{i=1}^N \hat\O_{a_i} 
      \to \bigoplus_{i=1}^N \Gamma(\Discp_1(a_i,\hat \O)) \to 0. \nn\ee
One way to interpret this exact sequence is to say that the space of global sections $\Gamma(\AA^1,\O)$ is the kernel of the map into the tuples of sections over the punctured discs:
\be \Gamma(\AA^1,\O)  = \ker\left( \Gamma(\AA^1\setminus \{a_1,\dots,a_N\},\O) \oplus \bigoplus_{i=1}^N \hat\O_{a_i} 
      \to \bigoplus_{i=1}^N \Gamma(\Discp_1(a_i,\hat \O))\right)\nn\ee
(and this is true whether we choose to complete the local rings $\O_a$ or not, cf. \cref{rem: completions}). 

As explained in \cite{FHK}, this statement has a derived analog: the space of global sections becomes a certain \dfn{homotopy kernel}. In our case, the statement is \cref{prop: hoker} below. This is nothing but a restatement of \cite[Proposition 1.1.4]{FHK} in our rectilinear setting, and the argument below is merely an expanded version of the argument given there\footnote{Though any errors which have appeared in it are of course due to the present authors.}

As we saw in \cref{thm: flasque res},the cochain complex
\be C := \C^\bul(\sca'(\AAA)) \label{cdef}\ee 
models the derived space of global sections of $\O$ on $\Rect$
\be R\Gamma^\bul(\Rect,\O)\simeq \C^\bul(\sca'(\AAA)). \nn\ee
Now let 
\be \xx = \{(a_1,b_1),\dots,(a_N,b_N)\} \nn\ee 
denote a finite collection of marked points in $\CC\times\CC$. 
Flags in the semisimplicial set of rectilinear flags $\Flag_\bul(\Rect)$ either start at one of the marked points, or they do not; thus, by definition of the unnormalized cochains functor $\C^\bul$, we have
\begin{align} C^n = \prod'_{\substack{f \in\Flag_{n}(\Rect)}} \AAA(f)
        &= \prod'_{\substack{f \in\Flag_{n}(\Rect\setminus\xx)}} \AAA(f)  \oplus 
\prod'_{\substack{f\in\Flag_{n}(\Rect) \\ 
            : f_0 \subset \xx}} \AAA(f) \label{Cn}
\end{align}
Here, we recognise the first of the two summands on the right as the space $C_1^n$ of the complex
\be C_1^\bul := \C^\bul(\sca'(\AAA_{\Rect\setminus\xx})) \simeq R\Gamma(\Rect\setminus\xx,\O)\label{c1def}\ee
which we know models the derived space of sections of $\O$ on $\Rect\setminus\xx$.
We want to interpret the second summand on the right in \cref{Cn}. 
When $n=0$, it is nothing but 
\be \bigoplus_{(a,b)\in \xx} \AAA(\{(a,b)\}) = \bigoplus_{(a,b)\in \xx} \O_{(a,b)} =: C_2^0, \label{c2def}\ee
which we may choose to think of as the degree zero space of a complex $0\to C_2^0\to 0$. 
For $n\geq 1$, we need the following observation about the definition \cref{def: AA}: for any flag\footnote{Here and occasionally elsewhere, we use a suggestive but rather loose notation: the flag is strictly-speaking the tuple $(f_0,f_1,\dots)$ with $\ol{\{f_0\}} \subset \ol{\{f_1\}} \subset\dots$.} $f_0 \subset f_1 \subset \dots $ whose first space is a point, we have
\be \AAA(f_0\subset f_1\subset \dots) = \AAA( f_1 \subset\dots) .\nn\ee
(The only cases to check are those in  \cref{AAa,AAb} and $\AAA((a,b) \subset \Eta) = \AAA(\Eta)$.)
If $f_0=(a,b)$ is one of the marked points then the truncated flags $f_1\subset\dots$ on the right here belongs to our semisimplicial set of flags $\Flag_\bul(\DDpx{{a,b}})$ in the punctured polydisc at this marked point. Using this fact, we get that, for $n\geq 1$,
\be C_3^{n-1} := \prod'_{\substack{f\in\Flag_{n}(\Rect) \\ : f_0 \subset \xx}} \AAA(f) 
  = \bigoplus_{(a,b) \in \xx} \C^{n-1}(\sca(\AAA|_{\Flag(\DDpx{{a,b}})}))  \label{c3def}\ee
(note the degree shift). We know the complexes appearing on the right here, namely
\be \C^\bul(\sca(\AAA|_{\Flag_\bul(\DDpx{{a,b}})})) \simeq R\Gamma^\bul(\DDpx{{a,b}},\O),\nn\ee 
model the derived algebra of sections of $\O$ on the punctured formal polydiscs at the marked points $(a,b)\in \xx$.

The $0$-step flags in $C_2$ form a semisimplicial subset of $\Flag_\bul(\Rect)$,  i.e. $\Pre C_2$ is a full subcategory of of $\Pre{\Flag_\bul(\Rect)}$. (It just consists of isolated points.) We get the $C_2$-object in commutative algebras $\AAA|_{C_2}$, which just sends, cf. \cref{def: AA},
\be \AAA : \{(a,b)\} \mapsto S_a^{-1}\CC[w]\oxC  S_b^{-1}\CC[z] = \O_{(a,b)}\nn\ee
for each marked point $(a,b)\in \xx$.

Thus, as a graded vector space, we have that
\be C^\bul = \left(C_1 \oplus C_2 \oplus \down C_3\right)^\bul,\qquad \text{i.e.}\qquad 
 C^n = C_1^n \oplus C_2^n \oplus C_3^{n-1} \label{ccdef}\ee
for each $n$, for these complexes $C$, $C_1$, $C_2$ and $C_3$ we defined in \cref{cdef}, \cref{c1def}, \cref{c2def} and \cref{c3def}. The differential of the complex $C^\bul$ is given, in matrix form, by
\be \dd_C = \bmx \dd_{C_1} + \dd_{C_2} & 0 \\ 
                                     d & -\dd_{C_3} \emx\label{cddef}\ee
where $d$ is a (degree zero) cochain map 
\be d:C_1\oplus C_2 \to C_3\nn\ee 
defined by our choices above. Conceptually, $d|_{C_2}$ is the sum of the maps 
\be \O_{(a,b)} = \Gamma(\DDp{{a,b}},\O) \to R\Gamma(\DDpx{{a,b}},\O) \nn\ee
while $d|_{C_1}$ is the diagonal map
\be R\Gamma(\Rect\setminus\xx,\O) \to R\Gamma(\DDpx{{a,b}},\O) .\nn\ee
But presented as in \cref{ccdef}, \cref{cddef}, one recognises the complex $(C^\bul, \dd_C)$ as the \dfn{mapping cocone} $\Cocone(d)$ of the cochain map $d$.  (See e.g. \cite{nlab:mappingcone}, or \cite[Chapter 10]{Weibel}, and note that $\Cocone(d)=\down\Cone(d)$.) In turn the mapping cocone represents the \dfn{homotopy kernel} $\hoker(d)$ 
of the map $d$, so we arrive at the following statement.
\begin{prop}[Following \cite{FHK} Proposition 1.1.4]\label{prop: hoker} The space of global sections of $\O$ on rectilinear space $\Rect$ is the homotopy kernel of the map $d$:
\begin{align} &\Gamma(\Rect,\O) =\nn\\ 
&\quad\hoker\left( R\Gamma(\Rect\setminus\xx,\O) \oplus \bigoplus_{(a,b)\in \xx} \O_{(a,b)} \xrightarrow d \bigoplus_{(a,b)\in \xx} R\Gamma(\DDpx{{a,b}},\O)\right),\nn
\end{align}
\qed\end{prop}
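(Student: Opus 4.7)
The preceding discussion has already given the key decomposition \cref{ccdef} of the underlying graded vector space. My plan is simply to verify that, under this decomposition, the differential $\dd_C$ takes the block-lower-triangular form displayed in \cref{cddef}, so that $(C^\bul,\dd_C)$ is literally the mapping cocone $\Cocone(d)$ of a degree-zero cochain map $d:C_1\oplus C_2\to C_3$. After that, the proposition follows by invoking the standard identification $\Cocone(d)\simeq \hoker(d)$ together with the models of \cref{thm: flasque res}.

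The combinatorial input is the following elementary observation: if $F\in\Flag_{n+1}(\Rect)$ has its initial space $F_0$ a closed point, then its next space $F_1$ cannot also be a closed point, because the strict inclusion $\ol{\{F_0\}}\subsetneq\ol{\{F_1\}}$ forces $\dim\ol{\{F_1\}}\geq 1$. In particular, $F_1\notin\xx$. Using this, block-by-block verification becomes routine. First, if $\mathbf{x}\in C_1^n$ and $F\in\Flag_{n+1}(\Rect)$ has $F_0\notin\xx$, then every face $\partial_i F$ (including $\partial_0 F$, by the observation above) lies in $\Flag_n(\Rect\setminus\xx)$, so the component $(\dd_C\mathbf{x})_F$ reproduces $\dd_{C_1}\mathbf{x}$; if instead $F_0\in\xx$, then only $\partial_0 F$ starts off $\xx$, producing exactly the restriction map $d|_{C_1}$ into $C_3$. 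A parallel analysis for $\mathbf{x}\in C_2^0$ gives $d|_{C_2}$. Finally, if $\mathbf{x}\in(\down C_3)^{n+1}=C_3^n$ and $F\in\Flag_{n+2}(\Rect)$, the faces $\partial_i F$ with $i\geq 1$ still start in $\xx$, while $\partial_0 F$ does not; hence $(\dd_C\mathbf{x})_F$ vanishes whenever $F_0\notin\xx$, so $\dd_C$ has no $C_3\to C_1\oplus C_2$ component. On flags with $F_0\in\xx$, the surviving alternating sum $\sum_{i\geq 1}(-1)^i x_{\partial_i F}$ becomes, after reindexing $j=i-1$ to match the internal differential of $\sca\AAA|_{\Flag(\DDpx{{F_0}})}$, precisely $-\dd_{C_3}\mathbf{x}$, accounting for the sign in \cref{cddef}.

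Having identified $(C^\bul,\dd_C)\cong\Cocone(d)$ as cochain complexes, the result is immediate: the mapping cocone of a cochain map is a standard model for its homotopy kernel, and \cref{thm: flasque res} (and its local analog for $\hat\O$ on $\DDpx{{a,b}}$) identifies $C$, $C_1$, $C_3$ with $R\Gamma(\Rect,\O)$, $R\Gamma(\Rect\setminus\xx,\O)$, and $\bigoplus_{(a,b)\in\xx} R\Gamma(\DDpx{{a,b}},\O)$, while $C_2^0=\bigoplus_{(a,b)\in\xx}\O_{(a,b)}$ by construction. I do not expect any substantive obstacle; the entire argument is a sign- and bookkeeping exercise around the dichotomy ``$F_0\in\xx$ or not,'' and the restricted-product conditions of \cref{adelic products} are automatically compatible with the decomposition because $\xx$ is finite.
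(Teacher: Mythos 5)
Your proposal is correct and follows essentially the same route as the paper: the paper's own proof is precisely the preceding discussion establishing the decomposition \cref{ccdef} and the matrix form \cref{cddef} of $\dd_C$, recognising $(C^\bul,\dd_C)$ as $\Cocone(d)$ and hence as a model for $\hoker(d)$, with the identifications supplied by \cref{thm: flasque res}. The only difference is that you spell out the block-triangularity of $\dd_C$ via the dichotomy ``$F_0\in\xx$ or not'' (and the fact that $F_1$ cannot be a closed point), a verification the paper leaves implicit.
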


\section{The Thom-Whitney-Sullivan functor}\label{sec: ThomWhitney}
Now we describe the tool we use to produce models of derived spaces of sections which come equipped with the structure of differential graded commutative or Lie algebras. In the literature it goes by the name of the Thom-Sullivan \cite{HS,BoG,Kapranov} or Thom-Whitney \cite{FMM} construction.

\subsection{Polynomial forms on the standard algebro-geometric simplex}\label{sec: omega}
There is a semisimplicial\footnote{It is actually simplicial, i.e. it has degeneracy as well as face maps, but we shall not need this.} commutative differential graded algebra 
\be \Omega:\Deltas^\op \to \dgCAlg \nn\ee 
defined as follows. For each $n\geq 0$, $\Omega([n])$ is the commutative differential graded algebra
\be \Omega([n]) :=  \CC[t_0,\dots,t_n; \dd t_0, \dots \dd t_n]\big/\langle \sum_{i=0}^n t_i -1, \sum_{i=0}^n \dd t_i \rangle \nn\ee 
with $t_i$ in degree $0$ and $\dd t_i$ in degree $1$, for each $i$, and equipped with the usual de Rham differential. One should think of $\Omega([n])$ as the complex of polynomial differential forms on the standard algebro-geometric $n$-simplex. 
For any map $\phi: [n] \to [N]$ of $\Deltas$, 
\be \Omega(\phi) : \Omega([N]) \to \Omega([n]) \nn\ee
is the map of cochain complexes defined by $t_i \mapsto \sum_{j\in \phi^{-1}(i)} t_j$.

\subsection{The functor $\Th$}\label{sec: tw functor}
Suppose $\g : \Deltas \to \dgLie$ is a semicosimplicial differential graded Lie algebra. 
We can construct the bigraded vector space $C^{\bul,\bul}$ whose spaces are
\begin{align} C^{p,q} &= \biggl\{ \mathbf a = (a_m)_{m\geq 0} \in \prod_{m\geq 0} \Omega([m])^p \ox \g([m])^q:\nn\\&\qquad\qquad\qquad
\left(\id \ox \g(\phi)\right) a_n
= \left(\Omega(\phi)\ox \id\right) a_N \quad \text{in}\quad \Omega([n])^p \ox \g([N])^q  \nn\\&\qquad\qquad\qquad
\qquad\text{for all $n\leq N$ and all maps $\phi:[n] \to [N]$ of $\Deltas$} \biggr\}.\label{cpq}
\end{align}
It is a bicomplex, with the de Rham differential
$\dd^p : C^{p,q} \to C^{p+1,q}$
and the internal differential of $\g$,
$\dd^q_\g : C^{p,q} \to C^{p,q+1}$.
The Thom-Whitney complex $(\Th^\bul(\g), \dth)$ is by definition the corresponding total complex
\be \Th^n(\g) = \bigoplus_{p+q=n} C^{p,q} \nn\ee
with differential 
\be \dth :=\dd + \dd_\g .\nn\ee
That is, 
\be \dth (\omega \ox a )= \dd \omega \ox a  + (-1)^{\gr \omega}  \omega \ox \dd_\g a  .\nn\ee 

As a cochain complex $\Th^\bul(\g)$ is quasi-isomorphic to the total complex $\Tot^\bul(\g)$, $\Tot^n(\g) = \bigoplus_{p+q=n} \C^p(\g^q)$, of the unnormalized cochain complex, cf. \cref{sec: cochains}, of the dg Lie algebra $\g$ \cite{Whitney} \cite[\S4]{HLHA}. A quasi-isomorphism  ${\int}: \Th(\g) \to \Tot(\g)$ is defined by integrating over the simplices; see \cite[\S5.2.6]{HS}. In fact an explicit deformation retract 
\be\begin{tikzcd} 
\ar[loop left,""] \Th(\g) \rar[shift left]{\int}  & \lar[shift left]{E} \Tot(\g) 
\end{tikzcd}.\nn\ee
is known \cite{Dupont}; see \cite[\S6]{FMM} and references therein. 
 
The great advantage of the Thom-Whitney complex is that it comes with the structure of a differential graded Lie algebra. The graded Lie bracket is given by
\be [\omega \ox a, \tau \ox b] := (-1)^{\gr a \gr \tau} \omega \wedge \tau \ox [a,b] \nn\ee
for all $a,b \in \g([n])$ and $\omega,\tau \in \Omega([n])$, for each $n$.
One obtains a functor, the Thom-Whitney functor, from semicosimplicial differential graded Lie algebras to differential graded Lie algebras,
\be \Th:[\Deltas, \dgLie] \to \dgLie. \nn\ee
Entirely analogously, one has a functor 
\be \Th: [\Deltas, \dgCAlg] \to \dgCAlg \nn\ee
(which we also denote $\Th$) from semicosimplicial dg commutative algebras to dg commutative algebras.

Any Lie algebra can be regarded as a differential graded Lie algebra concentrated in degree zero, and any commutative algebra can be regarded as a differential graded commutative algebra concentrated in degree zero. So the functors above restrict to functors
\be \Th:[\Deltas, \LieAlg] \to \dgLie,\quad\text{and}\quad \Th: [\Deltas, \CAlg] \to \dgCAlg  \nn\ee
which will actually be all that we need here.

\subsection{Thom-Whitney complex of an $S$-algebra}
Suppose $S$ is a semisimplicial set and 
\be \g: \Pre S \to \dgLie \nn\ee
an $S$-object in differential graded Lie algebras, in the sense of \cref{sec: S algebras}. On composing the the functor $\sca : [\Pre S, \dgLie] \to [\Deltas, \dgLie]$ with the Thom-Whitney functor, we get a functor
\be \Th \circ \sca : [\Pre S, \dgLie] \to \dgLie .\nn\ee
There is an intuitively clear geometrical interpretation of the differential graded Lie algebra $\Th(\g)$. Recall that an $S$-algebra assigns an algebra to each simplex of the semisimplicial set $S$, and specifies maps between them. We can realize $S$ geometrically and consider polynomial differential forms on $S$, with the form on each simplex valued in the corresponding algebra. It is natural to consider forms compatible with the maps between these algebras in the obvious sense.
And indeed we see $C^{p,q}$ of \cref{cpq} becomes 
\begin{align} C^{p,q} &= 
\biggl\{ \mathbf a = (a_{x})_{x\in \bigsqcup_n S([n])} \in \prod_{x\in \bigsqcup_n S([n])} \Omega([\dim x])^p \ox \g(x)^q:\nn\\&\qquad\qquad\qquad
\left(\id \ox \g(\phi)\right) a_x
= \left(\Omega(\phi)\ox \id\right) a_X \quad \text{in}\quad \Omega([\dim x])^p \ox \g(X)^q  \nn\\&\qquad\qquad\qquad
\qquad\text{for all maps $\phi:x \to X$ of $\Pre S$} \biggr\}.\label{cpqs}
\end{align}
That is, an element of $C^{p,q}$ consists of an $\g(x)$-valued polynomial differential form $a_x$ on each simplex of $x$ of $S$, such that whenever $x$ is a simplex of $S$ on the boundary of another simplex $X$ in $S$, then the pullback to $x$ of the form $a_X$ agrees with the image, under the map $\g(x) \to \g(X)$, of the form $a_x$.

If $R$ is a semisimplicial subset of $S$ then we have the morphism of semicosimplicial algebras $\pi: \sca A \to \sca (A|_R)$ of \cref{lem: sca2} and hence, by functoriality of $\Th$, a map
\be \Th(\pi): \Th(A) \to \Th(A|_R) \label{thpi}\ee
This is just the map which pulls back a differential form on $S$ to one on $R$.

\subsection{Conventions for coordinates on simplices}\label{sec: simplex coords}
For us, every 2-simplex corresponds to a flag of the form point
${\pt} \subset (line) \subset \Eta$.
(Compare \cref{D2pics}.) On each individual such simplex, we choose coordinates $(s,t)$ as follows.
\be
\begin{tikzpicture} 
\tdplotsetmaincoords{0}{270}
\begin{scope}[xshift=-5cm,yshift=4cm,scale=3,tdplot_main_coords,local bounding box=D2]
\coordinate (uU) at (0,0,0);
\coordinate (z=u) at (1,0,0)  ;  
\coordinate (p1p1) at (1,-1,0) ;   
\draw[->] (0,.5,0) -- (0,-1.7,0) node[below] {$s$};
\draw[->] (-.5,0,0) -- (1.3,0,0) node[left] {$t$};
\draw[dotted] (1,.5,0) -- (1,-1.5,0) node[right] {$t=1$};
\draw[dotted] (-.5,-1,0) -- (1.5,-1,0) node[left] {$s=1$};
\draw[dotted] (1.5,-1.5,0) -- (-.5,.5,0)  node[above left] {$s=t$};
\draw[thick,black,fill=blue,fill opacity=.1] (p1p1) -- (z=u) -- (uU)  --cycle; 

\node[fill,circle,inner sep=1.5,draw,label= below right:{$\pt$}]   (NuU)   at (uU)  {};
\node[fill,circle,inner sep=1.5,draw,label= below right:{$(line)$}]   (Nz=u)  at (z=u) {};
\node[fill,circle,inner sep=1.5,draw,label= below right:{$\Eta$}] (Np1p1) at (p1p1)  {};
\end{scope}
\end{tikzpicture}\nn\ee
Thus, on each simplex:
\begin{itemize}
\item $\dd s$ is a nonzero constant one-form that vanishes on the edge $\pt \subset (line)$
\item $\dd t$ is a nonzero constant one-form that vanishes on the edge $(line) \subset \Eta$
\item these one-forms agree, $\dd s = \dd t$, on the edge $\pt \subset \Eta$. 
\end{itemize}

\section{Homotopy Manin triples}\label{sec: Manin triples}
As we discussed in the introduction, our main goal in the present work is to give higher generalizations of certain Manin triples which are important in the theory of integrable systems.

To that end we must first clarify what such a generalization of a Manin triple should mean. We begin by expressing the usual definition of a Manin triple of Lie algebras in a form amenable to generalization. 
Recall that we work over $\CC$, here and throughout. A \dfn{Manin triple} $(\a,\a_\pm, \iota_\pm, \pairing --)$ is the data of
\begin{enumerate}
\item Lie algebras $\a$, $\a_+$, $\a_-$, 
\item Lie algebra maps 
$\a_+ \xrightarrow{\iota_+} \a \xleftarrow{\iota_-} \a_-$, and 
\item a map of vector spaces $\pairing -- :\a\ox\a \to \CC$,
\end{enumerate}
subject to the following conditions:
\begin{enumerate}[(i)]
\item the map of vector spaces $(\iota_+,\iota_-): \a_+ \oplus \a_- \to \a$ is an isomorphism.
\item the map $\pairing --: \a\ox\a \to \CC$ is 
\begin{enumerate}[-]
\item symmetric: $\pairing x y  = \pairing yx$ for all $x,y\in \a$. 
\item invariant: $\pairing {[x,y]} z + \pairing y{[x,z]} = 0$ for all $x,y,z\in \a$. 
\end{enumerate}
\item the map $\pairing --: \a\ox\a \to \CC$ is non-degenerate: If $\pairing x - =0$ as maps $\a\to \CC$ then $x=0$. 
\item both $\a_+$ and $\a_-$ are isotropic, i.e. the maps
\be \pairing{\iota_\pm(-)}{\iota_\pm(-)} : \a_\pm \ox \a_\pm \to \CC \nn\ee
are zero. 
\end{enumerate}

Having expressed the definition this way, it seems natural to make the following generalization to dg Lie algebras:

\begin{defn}\label{defn: homotopy manin triple} A \dfn{homotopy Manin triple (of dg Lie algebras)} $(\a, \a_\pm, \iota_\pm, \pairing - -,n)$ is the data of
\begin{enumerate}
\item dg Lie algebras $\a$, $\a_+$ and $\a_-$
\item dg Lie algebra maps 
$\a_+ \xrightarrow{\iota_+} \a \xleftarrow{\iota_-} \a_-$, and
\item a (degree zero) map of dg vector spaces $\pairing{-}{-}:\a \ox \a \to \downby n \CC$
\end{enumerate}
subject to the following conditions:
\begin{enumerate}[(i)]
\item\label{sum} the map of dg vector spaces $(\iota_+,\iota_-): \a_+ \oplus \a_- \to \a$ is a homotopy equivalence
\item\label{symmetric invariant} the map $\pairing --: \a\ox\a \to \downby n\CC$ is 
\begin{enumerate}[-]
\item (graded) symmetric:
 $\pairing x y  = (-1)^{\gr x \gr y }\pairing yx$ for all $x\in \a^{\gr x},y\in \a^{\gr y}$. 
\item invariant: \begin{align} \pairing{[x,y]}{z} + (-1)^{\gr x \gr y} \pairing{y}{[x,z]} &=0\nn\\
 \pairing{ \dd_\a x }{ y} + (-1)^{\gr x} \pairing{x}{\dd_\a y} &=0\nn 
\end{align}
for all $x\in \a^{\gr x}$, $y\in \a^{\gr y}$ and $z\in \a^{\gr z}$. 
\end{enumerate}
\item\label{nondegen} the map $\pairing --: \a\ox\a \to \downby n\CC$ is non-degenerate up to homotopy: If $\pairing x - \simeq 0$ then $x\simeq 0$ (i.e. $x$ is exact).
\item\label{isotropic} both $\a_+$ and $\a_-$ are isotropic, i.e. the maps
\be \pairing{\iota_\pm(-)}{\iota_\pm(-)} : \a_\pm \ox \a_\pm \to \CC \nn\ee
are homotopic to zero. 
\end{enumerate}
\end{defn}

Let us make several remarks about this definition.

\begin{rem}\label{rem: H}
Recall that in the category of dg vector spaces, every quasi-isomorphism is a homotopy equivalence (cf. \cref{sec: retracts}). Consequently, conditions (\ref{sum}) (\ref{nondegen}) and (\ref{isotropic}) respectively could be replaced with the following equivalent demands:
\begin{enumerate}
\item[(i')] the map $(\iota_+,\iota_-)$ induces an isomorphism of graded vector spaces 
\be H(\a_+) \oplus H(\a_-) \cong_{\grVect} H(\a)\nn\ee
\item[(iii')]\label{nondegenprime} the map of graded vector spaces 
\be H(\a) \ox H(\a) \to \downby n \CC \nn\ee
induced by $\pairing--$ is non-degenerate (on the nose).
\item[(iv')] both $H(\a_+)$ and $H(\a_-)$ are isotropic (on the nose) as subspaces of $H(\a)$. 
\end{enumerate}
\end{rem}

\begin{rem}
For a suitable notion of the dual $\a^*$ of $\a$, condition (\ref{nondegen}) is equivalent to 
\begin{enumerate}
\item[(iii'')] The map $\a \to \downby n \a^*$ induced by $\pairing - -$ is a homotopy equivalence.
\end{enumerate} 
(For us $\a$ is often a cochain complex in \emph{topological} vector spaces, and the appropriate dual cochain complex has spaces $\Homcont(\a^n,\CC)$ consisting of the \emph{continuous} linear maps to $\CC$ with its discrete topology. For example, in the case of $\CC[[t]]$ with its $t$-adic topology, one has $\Homcont(\CC[[t]],\CC) \cong t^{-1} \CC[t^{-1}]$, as one wants.)
\end{rem}

\begin{rem}
Let $\dgLie^\circ$ denote the full subcategory of $\dgLie$ whose objects are those dg Lie algebras that are both fibrant and cofibrant in some model structure (for example, in the standard projective model structure on $\dgLie$ induced from the standard projective model structure on $\dgVect$ \cite{HinichHomotopyAlgebras}). 
Recall that in $\dgLie^\circ$ every quasi-isomorphism is a homotopy equivalence, i.e. is invertible up to homotopies. One sees that if $\a,\a_\pm$ and $\b,\b_\pm$ are objects in $\dgLie^\circ$ and $\a\qisom \b$ and $\a_\pm \qisom \b_\pm$, then a Manin triple structure for $\a,\a_\pm$ induces a unique Manin triple structure for $\b,\b_\pm$. 
\end{rem}

\begin{rem}\label{rem: l8}
Kravchenko gives a definition of a Manin \Linfinity-triple, or strongly homotopy Manin triple, in \cite{Kravchenko}. The definitions are compatible in the following sense. 

In this paper we choose to work with dg Lie algebras rather than in the larger category of \Linfinity  algebras. But \cref{defn: homotopy manin triple} goes over to \Linfinity algebras unmodified except that one should generalise the invariance condition on $\pairing --$ to the cyclicity condition for all the brackets $\ell_k(-,\dots,-): \a^{\ox k} \to \upby{2-k} \CC$: 
\be \pairing{\ell_k(x_1,\dots,x_k)}{x_{k+1}} \pm \text{cyclic permutations} =0 .\nn\ee

Assume $(\a,\a_\pm,\iota_\pm,\pairing --)$ is a homotopy Manin triple of \Linfinity algebras in this sense.

Recall -- from e.g. \cite[\S9.4.3--9.4.5]{LodayVallette} or \cite[\S3,\S6]{FMM} and references therein -- that one has a homotopy transfer theorem for \Linfinity algebras: if $\a$ is an \Linfinity algebra then any homotopy equivalence of dg vector spaces $\a\simeq\b$ induces the structure of an \Linfinity algebra on the dg vector space $\b$, in such a way that $\a\simeq \b$ becomes a quasi-isomorphism of \Linfinity algebras. In particular the cohomology $H(\a)$ is a deformation retract of $\a$ in dg vector spaces, so it gets an \Linfinity algebra structure. This structure is compatible with the usual graded Lie algebra structure on $H(\a)$ -- in particular, it has vanishing differential -- but typically has non-vanishing higher brackets. In this way, every \Linfinity algebra $\a$ is quasi-isomorphic to a  \dfn{minimal model}, $H(\a)$. Since minimal models have vanishing differential, two minimal models are quasi-isomorphic precisely if they are isomorphic. 

Under certain conditions on the retract of $\a$ onto $H(\a)$, it is also possible to transfer the cyclic structure: see \cite[Appendix B]{BraunLazarev} and \cite{BraunLazarev2}. 
Assume we are in this situation.
Then one sees that $(H(\a), H(\a_\pm))$ gets the structure of a Manin $L_\8$-triple in the sense of \cite{Kravchenko}, namely: the induced bilinear form is nondegenerate, symmetric and cyclic and
\begin{enumerate}
\item $H(\a_\pm)$ are $L_\8$-subalgebras of $H(\a)$ such that $H(\g) \cong H(\g_+)\oplus H(\g_-)$ as graded vector spaces;
\item $H(\a_+)$ and $H(\a_-)$ are isotropic with respect to the induced bilinear form.
\end{enumerate}
Conversely, any such Manin $L_\8$-triple is clearly, in particular, a homotopy Manin triple in \Linfinity algebras in our sense.
\end{rem}


\section{Local homotopy Manin triple}\label{sec: local manin triple}

With the above definition of a homotopy Manin triple in place, we are ready in this section to give our first main an example of such a structure. Namely, we define a homotopy Manin triple associated to the punctured formal polydisc $\DDx$. See \cref{thm: local mt} below. It can be seen as a higher analog of the Manin triple \cref{yangman} from the introduction.

\subsection{The dg Lie algebra $\gDx$}\label{sec: 2disc}
Let $\g$ be a finite-dimensional simple Lie algebra. Recall from \cref{D2pics} the semisimplicial set $\Flag_\bul(\DDx)$, and from \cref{ADdef} our definition of the $\Flag_\bul(\DDx)$-algebra $\ADx$:
\be\begin{tikzpicture}[baseline =0]
\begin{scope}[xshift=0,scale=5,local bounding box=G]
\coordinate (w=U)  at (-1,0)  ;  
\coordinate (z=u) at (1,0)  ;  
\coordinate (p1p1) at (0,0) ;   
\coordinate ({w=U }{p1p1}) at (barycentric cs:p1p1=1.4,{w=U}=1);
\coordinate ({z=u }{p1p1}) at (barycentric cs:p1p1=1.4,{z=u}=1);
\node (Nw=U)  at(w=U) {$\CC[[w]]\oxC\CC((z))$};
\node (Nz=u) at (z=u) {$\CC((w))\oxC\CC[[z]]$};
\node (Np1p1) at (p1p1) {$\RRR$};
\node (N{w=U }{p1p1}) at ({w=U }{p1p1}) {$\RRR$};
\node (N{z=u }{p1p1}) at ({z=u }{p1p1}) {$\RRR$};
\begin{scope}[every node/.style={midway,circle,inner sep = 0,draw,fill=white}]
\draw[->] (Np1p1) -- (N{z=u }{p1p1}); \draw[->] (Np1p1) -- (N{w=U }{p1p1}); 
\draw[->] (Nz=u) --(N{z=u }{p1p1}); \draw[->] (Nw=U) -- (N{w=U }{p1p1}); 
\end{scope}
\end{scope}
\end{tikzpicture}
\nn\ee
where 
\be \RRR = \CC((w))\oxC\CC((z)). \nn\ee
We shall write
\be 
\gDx := \Th(\g \ox \ADx) \nn\ee
for the resulting differential graded Lie algebra of Thom-Whitney forms.
Explicitly,  $\gDx$ consists of pairs of differential forms:
\begin{align}
\gDx &= \Bigl\{ \bigl(\phi(s) = f(s) + F(s)\dd s,\,\psi(s) = g(s) + G(s)\dd s \bigr) : \nn\\
&\quad\qquad f,F,g,G \in \g \ox \CC((w))\oxC\CC((z))\nn\\
&\quad\qquad  f(1) = g(1) , \nn\\
&\quad\qquad f(0) \in \g \ox \CC[[w]]\oxC\CC((z)),\quad  g(0) \in \g \ox \CC((w))\oxC\CC[[z]] \Bigr\}.\label{gdxelements}
\end{align}
We should think of these forms as painted onto the edges of the semisimplicial set $\Flag_\bul(\DDx)$, cf. \cref{sec: simplex coords}:
\be\begin{tikzpicture} \begin{scope}[xshift=3cm,yshift=4cm,scale=5,local bounding box=D2p]
\coordinate (w=U)  at (-1,0)  ;  
\coordinate (z=u) at (1,0)  ;  
\coordinate (p1p1) at (0,0) ;   
\draw[thick,black] (p1p1) -- node[midway,label=below:{$\phi(s) = f(s) + F(s) \dd s$}] {} (w=U);
\draw[thick,black] (p1p1) -- node[midway,label=below:{$\psi(s) = g(s) + G(s) \dd s$}] {} (z=u);
\node[fill,circle,inner sep=1.5,draw,label= above:{$(z=0)$}]   (Nz=u)  at (z=u) {};
\node[fill,circle,inner sep=1.5,draw,label= above:{$(w=0)$}]   (Nw=U)  at (w=U) {};
\node[fill,circle,inner sep=1.5,draw,label= above:{$\Eta$}] (Np1p1) at (p1p1)  {};
\end{scope}
\end{tikzpicture}
\nn\ee

\subsection{The dg Lie algebras $\gp$ and $\gm$}\label{sec: def gp gm}
Let 
\be \gp := \g \ox \CC[[w]]\oxC\CC[[z]],\nn\ee 
regarded as a dg Lie algebra concentrated in degree zero.
Let 
\be \gm := \Th( \g \ox \ADx^{--} ),\nn\ee 
where we introduce another $\Flag_\bul(\DDx)$-algebra $\ADx^{--}$, given by
\be\begin{tikzpicture}[baseline =0]
\begin{scope}[xshift=0,scale=7,local bounding box=G]
\coordinate (w=U)  at (-1,0)  ;  
\coordinate (z=u) at (1,0)  ;  
\coordinate (p1p1) at (0,0) ;   
\coordinate ({w=U }{p1p1}) at (barycentric cs:p1p1=1,{w=U}=1.4);
\coordinate ({z=u }{p1p1}) at (barycentric cs:p1p1=1,{z=u}=1.4);
\node (Nw=U)  at(w=U) {$0$};
\node (Nz=u) at (z=u) {$0$};
\node (Np1p1) at (p1p1) {$w^{-1}z^{-1}\CC[w^{-1},z^{-1}]$};
\node (N{w=U }{p1p1}) at ({w=U }{p1p1}) {$w^{-1}z^{-1}\CC[w^{-1},z^{-1}]$};
\node (N{z=u }{p1p1}) at ({z=u }{p1p1}) {$w^{-1}z^{-1}\CC[w^{-1},z^{-1}]$};
\begin{scope}[every node/.style={midway,circle,inner sep = 0,draw,fill=white}]
\draw[->] (Np1p1) -- (N{z=u }{p1p1}); \draw[->] (Np1p1) -- (N{w=U }{p1p1}); 
\draw[->] (Nz=u) --(N{z=u }{p1p1}); \draw[->] (Nw=U) -- (N{w=U }{p1p1}); 
\end{scope}
\end{scope}
\end{tikzpicture}
\nn\ee
We have the maps of dg Lie algebras
\be \gp \xrightarrow{\Ip} \gDx \xleftarrow{\Im} \gm \nn\ee
where $\Ip:\gp \to \gDx$ sends $f\in \gp$ to the constant function $f\in \gDx$, and $\Im: \gm \to \gDx$ is the map of dg Lie algebras coming from the canonical embeddings, using \cref{lem: sca1} and functoriality of $\Th$. 

\subsection{The bilinear form}
Now we define on $\gDx$ a symmetric invariant form
\be \pairing {-}{-}: \gDx \ox \gDx \to \down\CC. \nn\ee
(Recall $\down\CC$ is a copy of $\CC$ put into cohomological degree $1$.)
We pick an orientation of $\Flag(\DDx)$: let 
\be \Sigma = \bigl( (w=0), \Eta)\bigr) - \bigl( (z=0), \Eta \bigr) 
\nn\ee 
be the $1$-chain whose boundary is $\del \Sigma = (w=0) - (z=0)$. Observe that, in the notation of \cref{sec: 2disc}, we have
\be \int_\Sigma (\dd s, - \dd s) = \int_{\Delta^1} \dd s + \int_{\Delta^1} \dd s = 2. \nn\ee 
For $a,b\in \g$ and $\omega,\lambda\in \Th(\ADx)$, we set
\be \pairing {a\ox \omega} {b\ox \lambda} := \down \half \bilin a b \int_\Sigma \res_w \res_z \omega \wedge \lambda \nn\ee
where $\bilin --$ denotes the standard invariant symmetric bilinear form on the simple Lie algebra $\g$, and where 
\be \res_t : \CC((t)) \to \CC;\quad \sum_k f_k t^k  \mapsto f_{-1} \nn\ee
is the residue map. Then we extend $\pairing{-}{-}$ by linearity to all of $\gDx\ox\gDx$. 


\subsection{Manin triple}
The main result of this section is then the following.
\begin{thm}\label{thm: local mt} These data 
\be (\gDx,\gp,\gm,\Ip,\Im,\pairing --)\nn\ee 
constitute a homotopy Manin triple in dg Lie algebras, in the sense of \cref{defn: homotopy manin triple}.
\end{thm}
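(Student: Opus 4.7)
The plan is to verify the four conditions (i)--(iv) of \cref{defn: homotopy manin triple} in turn: conditions (ii) and (iv) will follow by direct computation with the explicit description of $\gDx$ in \cref{gdxelements}, while (i) and (iii) will be reduced via \cref{rem: H} to assertions about the cohomology of $\gp$, $\gm$ and $\gDx$.

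I would first dispatch conditions (ii) and (iv). Graded symmetry is immediate from the graded commutativity of $\wedge$ and the symmetry of $\bilin{-}{-}$. The two invariance identities both reduce, after using the Leibniz rule for $\wedge$ and the bracket-invariance of $\bilin{-}{-}$, to an application of Stokes' theorem on $\Sigma$; the key observation is that the boundary $\del\Sigma = (w=0) - (z=0)$ contributes zero because at these vertices the algebra factors are $\CC[[w]]\oxC\CC((z))$ and $\CC((w))\oxC\CC[[z]]$ respectively, on which $\res_w\res_z$ vanishes identically. For isotropy, note that $\gp$ is concentrated in cohomological degree $0$ so that $\pairing{\iota_+(x)}{\iota_+(y)}$ has total degree $0$ while the pairing lands in $\down\CC$; hence $\gp$ is isotropic on the nose. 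For $\gm$, only pairings between a degree-$0$ element and a degree-$1$ element can be nonzero on degree grounds, but writing both representatives as polynomials in $s$ with coefficients in $w^{-1}z^{-1}\CC[w^{-1},z^{-1}]$, any product of two such coefficients lies in a subspace containing only monomials $w^{-m}z^{-n}$ with $m,n \geq 2$, which is annihilated by $\res_w\res_z$; so $\gm$ is also isotropic on the nose.

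For (i) and (iii) I would compute the three relevant cohomology spaces. First, $H(\gp) = \gp$ sits entirely in degree $0$. Second, $H(\gDx) \cong \g \ox H^\bul(\DDx,\hat\O)$ by combining \cref{thm: flasque res} (applied to $\DD$) with the quasi-isomorphism $\Th \simeq \Tot$ of \cref{sec: tw functor}, giving $\g\ox\CC[[w,z]]$ in degree $0$ and $\g\ox w^{-1}z^{-1}\CC[w^{-1},z^{-1}]$ in degree $1$. Third, $H(\gm)$ can be computed directly from \cref{gdxelements} (with both vertex algebras set to zero): the differential on $(f(s),g(s))$ is $(f'(s)\dd s, g'(s)\dd s)$, the vanishing conditions $f(0) = g(0) = 0$ force the kernel to be zero, and given any $(F\dd s,G\dd s)$ the primitives $\int_0^s F$ and $\int_0^s G$ solve $(F\dd s,G\dd s) = d(-)$ iff $\int_0^1(F - G)\,\dd s = 0$. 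Hence $H^0(\gm) = 0$ and $H^1(\gm) \cong \g\ox w^{-1}z^{-1}\CC[w^{-1},z^{-1}]$ via $(F\dd s,G\dd s) \mapsto \int_0^1(F-G)\,\dd s$. A direct inspection shows that $H(\Ip)$ is the identity on $H^0$ and $H(\Im)$ is the identity on $H^1$ under these identifications, proving (i). For (iii), the induced pairing on cohomology reduces, using $\omega = (f,f)$ constant for $f \in \gp$ and $\lambda = (F\dd s,G\dd s)$ for $H^1(\gm)$, to $(f,h) \mapsto \tfrac{1}{2}\bilin{-}{-}\res_w\res_z(f\cdot h)$ after applying $\int_\Sigma$; this is the standard perfect pairing between Taylor and principal part, combined with the nondegenerate $\bilin{-}{-}$ on $\g$, and is therefore nondegenerate.

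The main delicate step is the cohomology of $\gm$ together with the verification that $H(\Im)$ yields the asserted isomorphism onto $H^1(\gDx)$; this involves tracking the identification between $H^1$ of the Thom-Whitney model and $H^1$ of the unnormalized cochain complex through the integration map $\int: \Th \to \Tot$, and ensuring the sign/orientation conventions of $\Sigma$ used in defining $\pairing{-}{-}$ are consistent with those used in the residue formula on cohomology.
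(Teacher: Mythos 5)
Your proposal is correct in substance and follows the same overall skeleton as the paper (symmetry/invariance by Stokes with the boundary terms killed by the boundary conditions, and conditions (i), (iii), (iv) reduced via \cref{rem: H} to statements in cohomology), but it diverges on the key condition (i): where the paper constructs an explicit strong deformation retract $\gm\oplus\gp\rightleftarrows\gDx$ with an explicit homotopy built from integration along the edges (\cref{prop: discx retract}, supplemented by \cref{prop: H1 retract} for $H(\gm)$), you instead compute $H^\bul(\gp\oplus\gm)$ and $H^\bul(\gDx)$ separately and check that $\Ip$, $\Im$ induce isomorphisms, identifying $H^\bul(\gDx)$ through the flasque resolution of \cref{thm: flasque res} and the quasi-isomorphism $\Th\simeq\Tot$. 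This is legitimate (over $\CC$ a quasi-isomorphism of dg vector spaces is a homotopy equivalence), and your degree-counting/residue argument that $\gm$ is isotropic \emph{on the nose} (products of elements of $w^{-1}z^{-1}\CC[w^{-1},z^{-1}]$ have all exponents $\leq -2$) is actually more direct than the paper's passage to cohomology representatives. What the paper's route buys is the explicit homotopy with side conditions, which is exactly what is reused later for the triangular decomposition (\cref{cor: disc bimod}) and as the prototype for the global case; your route proves the theorem but does not produce that data, and it imports the adelic machinery of \cref{sec: rect adel} which the paper deliberately keeps independent of these sections.

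Two small cautions. First, do not quote $H^\bul(\DDx,\hat\O)$ as known: in this rectilinear setting the paper only establishes it as \cref{cor: cohomology}, downstream of the retract you are trying to avoid, so to keep your argument non-circular you should compute it directly from the two-term associated complex of $\sca\ADx$ (degree $0$: $\CC[[w]]\oxC\CC((z))\oplus\CC((w))\oxC\CC[[z]]\oplus\RRR$, degree $1$: $\RRR\oplus\RRR$), which immediately gives kernel $\CC[[w]]\oxC\CC[[z]]$ and cokernel $w^{-1}z^{-1}\CC[w^{-1},z^{-1}]$. Second, the step you flag as delicate -- that $H^1(\Im)$ is an isomorphism under these identifications -- is genuinely where the content of condition (i) sits in your approach, and it should be carried out, not only announced: one checks that the integration map sends the representative $\tfrac12 a(\dd s,-\dd s)$ to an edge pair whose difference is $a$, which is precisely its class in $\RRR/(\CC[[w]]\oxC\CC((z))+\CC((w))\oxC\CC[[z]])$, and that the orientation of $\Sigma$ used in the pairing matches this sign. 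With those two points supplied, your verification of all four conditions goes through.
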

\begin{proof}
Condition (\ref{sum}) is \cref{prop: discx retract} below. For condition (\ref{symmetric invariant}), graded symmetry is clear, and invariance is \cref{prop: inv}. For the nondegeneracy and isotropy conditions (\ref{nondegen}) and (\ref{isotropic}), it is convenient to establish the equivalent statements about cohomologies from \cref{rem: H}. We do so in \cref{prop: pairing}. 
\end{proof}

We start with the following, which is fundamental for us.
\begin{prop}\label{prop: discx retract}
At the level of dg vector spaces, $\gm \oplus \gp$ is a deformation retract of $\gDx$:
\be\begin{tikzcd} 
\gm \oplus \gp \rar[shift left]{I}  & \lar[shift left]{P} \gDx \ar[loop right,"h"] 
\end{tikzcd}\nn\ee
\end{prop}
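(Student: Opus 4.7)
The plan is to exploit the vector-space splitting
\begin{equation*}
\RRR = \RRR^{++} \oplus \RRR^{+-}_- \oplus \RRR^{-+}_- \oplus \RRR^{--},
\end{equation*}
where I write $\RRR^{++} := \CC[[w]]\oxC\CC[[z]]$, $\RRR^{+-}_- := \CC[[w]]\oxC z^{-1}\CC[z^{-1}]$, $\RRR^{-+}_- := w^{-1}\CC[w^{-1}]\oxC\CC[[z]]$, and $\RRR^{--} := w^{-1}z^{-1}\CC[w^{-1},z^{-1}]$. This splitting is compatible with the vertex algebras: $\CC[[w]]\oxC\CC((z)) = \RRR^{++}\oplus\RRR^{+-}_-$ and $\CC((w))\oxC\CC[[z]] = \RRR^{++}\oplus\RRR^{-+}_-$. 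Because the Thom--Whitney differential on $\gDx$ is purely de Rham in the simplicial coordinate (and in particular trivial on the $\RRR$-factor), applying this splitting coefficient-wise yields a direct sum of sub-dg-vector-spaces
\begin{equation*}
\gDx = \gDx^{++} \oplus \gDx^{+-}_- \oplus \gDx^{-+}_- \oplus \gDx^{--}.
\end{equation*}
The boundary constraints of \eqref{gdxelements} restrict to: no extra condition on $\gDx^{++}$; $g(0) = 0$ on $\gDx^{+-}_-$; $f(0) = 0$ on $\gDx^{-+}_-$; and $f(0) = g(0) = 0$ on $\gDx^{--}$.

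One then identifies the two ``extreme'' summands: $\Im$ is an isomorphism of $\gm$ onto $\gDx^{--}$, and $\Ip$ embeds $\gp$ into $\gDx^{++}$ as the constant pairs. The problem reduces to (a) constructing a deformation retract of $\gDx^{++}$ onto $\Ip(\gp)$, and (b) exhibiting contracting homotopies for the two ``mixed'' summands $\gDx^{\pm\mp}_-$.

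For both we use explicit polynomial integration along the simplicial coordinate $s$. For $\gDx^{++}$, define $P_{++}(\phi,\psi) := f(1)$ (which equals $g(1)$), and set
\begin{equation*}
h_{++}(F\,\dd s,\,G\,\dd s) := \left(\int_1^s F(t)\,\dd t,\ \int_1^s G(t)\,\dd t\right),
\end{equation*}
extended by zero to elements of degree zero. A direct check shows $P_{++}\Ip = \id_{\gp}$ and $\dth h_{++} + h_{++}\dth = \id - \Ip P_{++}$. For $\gDx^{+-}_-$, define
\begin{equation*}
h_{+-}(F\,\dd s,\,G\,\dd s) := \left(\int_0^s F\,\dd t + \int_0^1 (G-F)\,\dd t,\ \int_0^s G\,\dd t\right),
\end{equation*}
again extended by zero on degree zero. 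The constant in the first component is chosen precisely so that the two boundary conditions ($g(0)=0$ at the outer vertex, $f(1)=g(1)$ at $\Eta$) are preserved by $h_{+-}$, and one then verifies $\dth h_{+-} + h_{+-}\dth = \id$, showing $\gDx^{+-}_-$ is contractible. The case of $\gDx^{-+}_-$ is symmetric.

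Assembling the pieces, the retract data are $I := \Ip \oplus 0 \oplus 0 \oplus \Im^{-1}|_{\gDx^{--}}$, $P := P_{++} \oplus 0 \oplus 0 \oplus \Im^{-1}|_{\gDx^{--}}$ (reading $\gm \oplus \gp$ through the identifications), and $h := h_{++} \oplus h_{+-} \oplus h_{-+} \oplus 0$; the required identities then follow summand-by-summand from the local computations above. The argument is entirely linear-algebraic. The only step that requires mild attention is the verification that each homotopy $h_{\star}$ produces polynomial forms still satisfying the vertex constraints appropriate to its summand; this is where the perhaps unintuitive choice of additive constant in $h_{+-}$ (and its mirror in $h_{-+}$) becomes essential, and is the sole genuine bookkeeping obstacle in the proof.
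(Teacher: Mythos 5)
Your proof is correct and takes essentially the same route as the paper: the same four-fold coefficient splitting of $\CC((w))\oxC\CC((z))$, the same projection ($P(\omega)=(\omega^{++}|_{s=1},\omega^{--})$), and homotopies that, after absorbing the additive constants, coincide exactly with the paper's edge-by-edge integrations (zero on the $^{--}$ part, $\int_1^s$ on the $^{++}$ part, and base-point-at-the-outer-vertex integration on the mixed parts). The only blemish is notational: in your assembled $I$ the component landing in $\gDx^{--}$ should be $\Im$ (the identification $\gm \isom \gDx^{--}$), not $\Im^{-1}$.
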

\begin{proof}
The maps $\Ip$ and $\Im$ of dg Lie algebras define the map of dg vector spaces
\be I= \Ip \oplus \Im: \gm \oplus \gp \to \gDx.\nn\ee 
(Note that it is not a map of dg Lie algebras: the images of $\gp$ and $\gm$ in $\gDx$ are not mutually commuting.) 

We must define a map 
\be P: \gDx \to \gm \oplus \gp  \nn\ee
and a homotopy $h: \gDx \to \gDx$. 

Let 
\be \omega(s) = (\phi(s),\psi(s)) = (f(s) + F(s) \dd s, g(s) + G(s) \dd s)\nn\ee 
be the element of $\gDx$ we introduced above, cf. \cref{gdxelements}.
To define the map $P$ and the homotopy $h$, we first note that we get a unique decomposition
\be \omega = \omega^{++} + \omega^{-+} + \omega^{+-} + \omega^{--} \label{omegadecomp}\ee
coming from the direct sum decomposition of vector spaces
\begin{align} &\CC((z))\oxC\CC((w))\nn\\ &\cong_\CC \CC[[w]]\oxC\CC[[w]] \oplus   \CC[[z]]\oxC w^{-1} \CC[w^{-1}] \oplus z^{-1} \CC[z^{-1}]\oxC\CC [[w]] \oplus w^{-1} z^{-1} \CC[w^{-1},z^{-1}] .\nn
\end{align}
We observe that $\omega^{--}$ may be interpreted as an element of $\gm$, and define 
\be P(\omega) := \left( \omega^{++}|_{s=1} \,,\,\,  \omega^{--}\right) .\label{disc P}\ee
We should then also set $h(\omega^{--}) = 0$, for indeed 
$\omega^{--} = I \circ P (\omega^{--})$
holds exactly.

Next we define
\be h(\omega^{-+})(s) = \left( \int_{0}^{s} F^{-+}(s') \dd s' \,,\,\, \int_{0}^{1} F^{-+}(s') \dd s' + \int_{1}^{s} G^{-+}(s') \dd s' \right) \nn\ee
which we may sketch as 
 \be\begin{tikzpicture} \begin{scope}[xshift=3cm,yshift=4cm,scale=5,local bounding box=D2p,decoration={
    markings,
    mark=at position 0.25 with {\arrow{stealth}},
    mark=at position 0.75 with {\arrow{stealth}},
}]
\coordinate (w=U)  at (-1,0)  ;  
\coordinate (z=u) at (1,0)  ;  
\coordinate (p1p1) at (0,0) ;   
\draw[thick,black] (p1p1) -- node[midway,label=below:{$$}] {} (w=U);
\draw[thick,black] (p1p1) -- node[midway,label=below:{$$}] {} (z=u);
\node[fill,circle,inner sep=1.5,draw,label= above:{$(z=0)$}]   (Nz=u)  at (z=u) {};
\node[fill,circle,inner sep=1.5,draw,label= above:{$(w=0)$}]   (Nw=U)  at (w=U) {};
\node[fill,circle,inner sep=1.5,draw,label= above:{$\Eta$}] (Np1p1) at (p1p1)  {};
\draw[ultra thick,postaction={decorate},blue] ($ (w=U) + .0*(0,-1) $) -- ($ (p1p1)!.6!(z=u)  + .0*(0,-1) $)   ;
\end{scope}
\end{tikzpicture}
\nn\ee
The choice of base point for these integrals is fixed by the following consideration.
The coefficient function $F^{-+}(s)$ of the one-form $F^{-+}(s) \dd s$ obeys no conditions at $s=0$ in general, and yet we need the function $h(F^{-+}(s) \dd s)$ to vanish there, since $ w^{-1} \CC[w^{-1}]\oxC\CC[[z]] \cap \CC[[w]]\oxC\CC((z)) = 0$. 

We then indeed have that
\begin{align} [\dd, h] (f^{-+}(s),g^{-+}(s)) = h \circ \dd (f^{-+}(s),g^{-+}(s)) &= \bigl(f^{-+}(s) , g^{-+}(s) - g^{-+}(1) + f^{-+}(1)\bigr) \nn\\
&=(f^{-+}(s) , g^{-+}(s)) \nn
\end{align}
and 
\be [\dd, h] (F^{-+}(s)\dd s, G^{-+}(s)\dd s) = \dd \circ h  (F^{-+}(s)\dd s, G^{-+}(s)\dd s)  =  (F^{-+}(s)\dd s, G^{-+}(s)\dd s).\nn\ee
That is, $[\dd, h] \omega^{-+} = \omega^{-+}$. 

For the same reasons, for the component $\omega^{+-}$ we are forced to integrate from the other end. We set
\be h(\omega^{+-})(s) = \left( \int_{0}^{1} G^{+-}(s') \dd s' + \int_{1}^{s} F^{+-}(s') \dd s' \,,\,\,  \int_{0}^{s} G^{+-}(s') \dd s' \,,\,\,  \right) \nn\ee
which we may sketch as
\be\begin{tikzpicture} \begin{scope}[xshift=3cm,yshift=4cm,scale=5,local bounding box=D2p,decoration={
    markings,
    mark=at position 0.25 with {\arrow{stealth}},
    mark=at position 0.85 with {\arrow{stealth}},
}]
\coordinate (w=U)  at (-1,0)  ;  
\coordinate (z=u) at (1,0)  ;  
\coordinate (p1p1) at (0,0) ;   
\draw[thick,black] (p1p1) -- node[midway,label=below:{$$}] {} (w=U);
\draw[thick,black] (p1p1) -- node[midway,label=below:{$$}] {} (z=u);
\node[fill,circle,inner sep=1.5,draw,label= above:{$(z=0)$}]   (Nz=u)  at (z=u) {};
\node[fill,circle,inner sep=1.5,draw,label= above:{$(w=0)$}]   (Nw=U)  at (w=U) {};
\node[fill,circle,inner sep=1.5,draw,label= above:{$\Eta$}] (Np1p1) at (p1p1)  {};
\draw[ultra thick,postaction={decorate},blue] ($ (z=u) + .0*(0,-1) $) -- ($ (p1p1)!.3!(w=U)  + .0*(0,-1) $)   ;
\end{scope}
\end{tikzpicture}
\nn\ee
and then we have $[\dd, h] \omega^{+-} = \omega^{+-}$.

For the component $\omega^{++}$ the new feature is that $f^{++}(s)$ can have a non-zero constant term. We set
\be h(\omega^{++})(s) = \left( \int_{1}^{s} F^{++}(s') \dd s' \,,\,\, \int_{1}^{s} G^{++}(s') \dd s' \right) \nn\ee  
which we may sketch as
\be\begin{tikzpicture} \begin{scope}[xshift=3cm,yshift=4cm,scale=5,local bounding box=D2p,decoration={
    markings,
    mark=at position 0.75 with {\arrow{stealth}},
}]
\coordinate (w=U)  at (-1,0)  ;  
\coordinate (z=u) at (1,0)  ;  
\coordinate (p1p1) at (0,0) ;   
\draw[thick,black] (p1p1) -- node[midway,label=below:{$$}] {} (w=U);
\draw[thick,black] (p1p1) -- node[midway,label=below:{$$}] {} (z=u);
\node[fill,circle,inner sep=1.5,draw,label= above:{$(z=0)$}]   (Nz=u)  at (z=u) {};
\node[fill,circle,inner sep=1.5,draw,label= above:{$(w=0)$}]   (Nw=U)  at (w=U) {};
\node[fill,circle,inner sep=1.5,draw,label= above:{$\Eta$}] (Np1p1) at (p1p1)  {};
\draw[ultra thick,postaction={decorate},blue] ($ (p1p1) + .0*(0,-1) $) -- ($ (p1p1)!.3!(w=U)  + .0*(0,-1) $)   ;
\draw[ultra thick,postaction={decorate},blue] ($ (p1p1) + .0*(0,-1) $) -- ($ (p1p1)!.6!(z=u)  + .0*(0,-1) $)   ;
\end{scope}
\end{tikzpicture}
\nn\ee
We then have
\begin{align} [\dd, h] (f^{++}(s),g^{++}(s)) &= h \circ \dd (f^{++}(s),g^{++}(s)) \nn\\ 
&= (f^{++}(s) - f^{++}(1), g^{++}(s) - g^{++}(1)) \nn\\
&=(f^{++}(s) , g^{++}(s)) - ( f^{++}(1), g^{++}(1)) \nn\\
&= (\id - I \circ P) (f^{++}(s),g^{++}(s)) \nn
\end{align}
and
\be [\dd, h] (F^{++}(s)\dd s, G^{++}(s)\dd s) = \dd \circ h  (F^{++}(s)\dd s, G^{++}(s)\dd s)  =  (F^{++}(s)\dd s, G^{++}(s)\dd s) .\nn\ee
This completes the proof that $[\dd, h] = \id - I \circ P$. It is clear on inspection that the relation $\id = P \circ I$ holds exactly. 
\end{proof}

The next statement shows that the cohomology of $\gm$ is a copy of $ \g \ox w^{-1} z^{-1} \CC[w^{-1},z^{-1}] $ in cohomological degree one.
\begin{prop}\label{prop: H1 retract}
There is a deformation retract dg vector spaces
\be\begin{tikzcd} 
\down \g \ox w^{-1} z^{-1} \CC[w^{-1},z^{-1}] \rar[shift left]{i}  & \lar[shift left]{p} \gm \ar[loop right,"h"] 
\end{tikzcd}\nn\ee
\end{prop}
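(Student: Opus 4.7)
The plan is to unpack the definition $\gm = \Th(\g \otimes \ADx^{--})$ entirely explicitly and write down the retraction data by hand. Let $M := w^{-1} z^{-1} \CC[w^{-1},z^{-1}]$ for brevity. Since $\ADx^{--}$ assigns $0$ to each leaf vertex $(w{=}0),(z{=}0)$ and $M$ to $\Eta$ and both edges, inspection of the Thom-Whitney construction identifies $\gm^0$ with the space of pairs $(f(s),g(s))$ of polynomials in $s$ with coefficients in $\g\otimes M$, subject to the boundary constraints $f(0) = g(0) = 0$ (imposed by the zero algebras at the leaves) and $f(1) = g(1)$ (imposed by compatibility at $\Eta$); and $\gm^1$ with the space of pairs $(F(s)\dd s, G(s)\dd s)$ of polynomial one-forms valued in $\g\otimes M$, with no constraints (since $\dd s$ pulls back to $0$ at a vertex). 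All higher degrees vanish, and the differential $\dd:\gm^0 \to \gm^1$ is just $(f,g) \mapsto (f'\dd s,\,g'\dd s)$.

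I then propose to set
\[ i(m) := \tfrac{1}{2}\bigl(m\,\dd s,\,-m\,\dd s\bigr), \qquad p(F\dd s, G\dd s) := \int_0^1 \bigl(F(s) - G(s)\bigr)\dd s,\]
with $p$ set to zero on $\gm^0$, and define the homotopy $h:\gm^1 \to \gm^0$ by
\[ h(F\dd s, G\dd s) := \Bigl(\int_0^s F\,\dd s' - \tfrac{s}{2}\,\alpha,\ \int_0^s G\,\dd s' + \tfrac{s}{2}\,\alpha\Bigr), \qquad \alpha := \int_0^1 (F - G)\,\dd s'.\]
The symmetric $\tfrac{1}{2}$-split is essentially forced: the naive primitives $(\int_0^s F,\int_0^s G)$ manifestly satisfy the conditions at $s=0$ but would typically violate $f(1)=g(1)$, so one must add linear-in-$s$ correction terms whose difference at $s=1$ absorbs $\alpha$, and splitting symmetrically is what makes $p\circ i = \id$ consistent with the orientation chain $\Sigma = ((w{=}0),\Eta) - ((z{=}0),\Eta)$ used subsequently.

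The verification that this is a deformation retract reduces to four checks, all straightforward. First, $p\circ i = \id$ and $(p\circ i)|_{\gm^0} = 0$ are immediate from the formulas. Second, $h\circ \dd|_{\gm^0} = \id$ follows because $h(f'\dd s, g'\dd s) = (f(s) - f(0),\, g(s) - g(0)) = (f,g)$, where the linear correction term drops out since $\int_0^1 (f'-g')\dd s' = (f(1)-g(1)) - (f(0)-g(0)) = 0$ by the defining constraints of $\gm^0$. Third, $\dd\circ h|_{\gm^1}$ computes to $(F\dd s, G\dd s) - \tfrac{1}{2}(\alpha\dd s,\,-\alpha\dd s)$, which is precisely $\id - i\circ p$ on $\gm^1$. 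The only substantive point to watch is that $h(F\dd s, G\dd s)$ genuinely lands in $\gm^0$ and not merely in the larger space of all polynomial pairs; this is exactly what the $\tfrac{s}{2}\alpha$-correction is designed to guarantee, and it is the place where the non-triviality of the construction is concentrated.
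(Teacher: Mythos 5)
Your proposal is correct and follows essentially the same route as the paper: the same explicit description of $\gm$ in degrees $0$ and $1$, the same $i$ and $p$, and the same homotopy built from naive primitives plus a linear-in-$s$ correction proportional to $\alpha=\int_0^1(F-G)\,\dd s'$. The one discrepancy is the sign of that correction: your choice ($-\tfrac s2\alpha$ in the first component, $+\tfrac s2\alpha$ in the second) is the self-consistent one, since it is what makes $h$ land in $\gm$ (preserving $f(1)=g(1)$) and gives $\dd\circ h=\id-i\circ p$, whereas the formula for $h$ displayed in the paper has the opposite sign, which appears to be a typo given that its subsequent verification only goes through with your sign.
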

\begin{proof}
We let 
\be i(\down a) := \half a ( \dd s, - \dd s ), \nn\ee
\be p\bigl( (f(s) + F(s) \dd s, g(s) + G(s) \dd s) \bigr) := \down \int_0^1 (F(s') - G(s')) \dd s' \nn\ee
and finally
\begin{align} &h\bigl( (f(s) + F(s) \dd s, g(s) + G(s) \dd s) \bigr) \nn\\
&:= \left( \int_0^s F(s') \dd s' + \half s \int_0^1 (F(s') - G(s') \dd s'),\, \int_0^s G(s') \dd s' -  \half s \int_0^1 (F(s') - G(s') \dd s')\right). \nn
\end{align}
Then $p(i(\down a)) = \down \int_0^1 a \dd s' = \down a \int_0^1 \dd s' = \down a$, so $p\circ i = \id$, and 
\begin{align} 
&   (h\circ \dd)\bigl(  (f(s) + F(s) \dd s, g(s) + G(s) \dd s) \bigr) \nn\\
&=   h\bigl( ( f'(s) \dd s, g'(s) \dd s) \bigr) \nn\\
&=   \bigl(f(s) + \half s( f(1) - f(0) - g(1) + g(0) ) , \nn\\
&\qquad     g(s) - \half s( f(1) - f(0) - g(1) + g(0) ) \bigr) = (f(s), g(s)), \nn
\end{align} 
since $f(1) = g(1)$ and $f(0) = 0 = g(0)$, while
\begin{align}
&   (\dd \circ h)\bigl(  (f(s) + F(s) \dd s, g(s) + G(s) \dd s) \bigr) \nn\\
&=   \dd\left( \int_0^s F(s') \dd s' + \half s \int_0^1 (F(s') - G(s') \dd s'),\, \int_0^s G(s') \dd s'  - \half s \int_0^1 (F(s') - G(s') \dd s'\right). \nn\\
&=   \bigl( F(s) \dd s, G(s) \dd s)  + \half \int_0^1 (F(s') - G(s') \dd s') (\dd s , - \dd s) \nn
\end{align}
and here we recognize the last term as $i(p( \bigl( (f(s) + F(s) \dd s, g(s) + G(s) \dd s) \bigr) ))$, so that 
\be \id - i \circ p = [\dd,h] \nn\ee
as required.
\end{proof}
On combining this with \cref{prop: discx retract}, we get the following corollary.
\begin{cor}\label{cor: cohomology} At the level of graded vector spaces
\begin{align} H^k(\gDx) = \begin{cases} \g\ox\CC[[w]]\oxC\CC[[z]] & k =0 \\ \g \ox w^{-1} z^{-1} \CC[w^{-1}, z^{-1}] & k = 1 
\\ 0 & k \notin \{ 0,1\}
  \end{cases} \nn\end{align}
\end{cor}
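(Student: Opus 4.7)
The plan is to deduce the cohomology of $\gDx$ directly by combining the two deformation retracts \cref{prop: discx retract} and \cref{prop: H1 retract}, using the fact that any deformation retract in dg vector spaces is in particular a quasi-isomorphism, so cohomology passes through.

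First, applying \cref{prop: discx retract}, which exhibits $\gm \oplus \gp$ as a deformation retract of $\gDx$ in dg vector spaces, I get a graded isomorphism
\begin{equation*}
H^\bullet(\gDx) \cong H^\bullet(\gm) \oplus H^\bullet(\gp).
\end{equation*}
Next I analyze the two summands separately. By construction, $\gp = \g\ox \CC[[w]]\oxC \CC[[z]]$ is concentrated in cohomological degree zero with zero differential, so $H^0(\gp) = \gp$ and $H^k(\gp)=0$ for $k\neq 0$. This gives the stated value of $H^0(\gDx)$.

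For the second summand I invoke \cref{prop: H1 retract}, which shows $\down \g \ox w^{-1} z^{-1} \CC[w^{-1},z^{-1}]$ is a deformation retract of $\gm$ in dg vector spaces. Since this retract target is concentrated in cohomological degree one (because of the shift $\down$) and carries zero differential, I conclude
\begin{equation*}
H^1(\gm) \cong \g \ox w^{-1} z^{-1} \CC[w^{-1},z^{-1}], \qquad H^k(\gm) = 0 \text{ for } k\neq 1.
\end{equation*}
Combining the two calculations yields the stated description of $H^\bullet(\gDx)$ in all degrees. There is no real obstacle: both of the constituent deformation retracts were already established in \cref{prop: discx retract} and \cref{prop: H1 retract}, and the remainder is just additivity of cohomology over direct sums together with the triviality of differentials on the retract targets.
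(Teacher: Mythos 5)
Your proposal is correct and is essentially the paper's own argument: the corollary is stated there as an immediate consequence of combining \cref{prop: discx retract} with \cref{prop: H1 retract}, exactly as you do. You have simply spelled out the routine details (homotopy equivalences are quasi-isomorphisms, cohomology is additive over direct sums, and both retract targets have zero differential), which is fine.
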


\begin{rem}
Since $H^\bul(\gDx)$ is a retract in of $\gDx$ in $\dgVect$, it is possible to endow it with the structure of an \Linfinity algebra via the homotopy transfer theorem, c.f. \cref{rem: l8}.
\end{rem}

\begin{prop}\label{prop: inv}
The symmetric bilinear form $\pairing{-}{-}$ is $\gDx$-invariant, i.e.
\begin{align} \pairing{[x,y]}{z} + (-1)^{\gr x \gr y} \pairing{y}{[x,z]} &=0\nn\\
 \pairing{ \dd x }{ y} + (-1)^{\gr x} \pairing{x}{\dd y} &=0\nn 
\end{align}
for all $x,y,z\in \gDx$.
\end{prop}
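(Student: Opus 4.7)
The plan is to verify both identities on pure tensors $x = a \otimes \omega$, $y = b \otimes \lambda$, $z = c \otimes \mu$ with $a,b,c \in \g$ and $\omega,\lambda,\mu \in \Th(\ADx)$, then extend by linearity. Recall that $\g$ sits in cohomological degree zero, so with the sign conventions of \cref{sec: tw functor} the bracket in $\gDx = \Th(\g \otimes \ADx)$ simplifies to $[a\otimes\omega,\, b\otimes\lambda] = [a,b]\otimes (\omega\wedge\lambda)$, and the differential is $\dd(a\otimes\omega) = a\otimes \dd\omega$, where $\dd$ denotes the de Rham differential on the simplicial variables. The form-degree of an element $a\otimes\omega$ equals $\gr\omega$.

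For the bracket identity, using the simplified formula above I would compute
\be
\pairing{[x,y]}{z} = \down\tfrac12 \bilin{[a,b]}{c}\int_\Sigma \res_w\res_z(\omega\wedge\lambda\wedge\mu),
\nn\ee
\be
(-1)^{\gr x\gr y}\pairing{y}{[x,z]} = (-1)^{\gr\omega\gr\lambda}\down\tfrac12 \bilin{b}{[a,c]}\int_\Sigma \res_w\res_z(\lambda\wedge\omega\wedge\mu).
\nn\ee
Graded commutativity of the wedge product in the dg commutative algebra $\Th(\ADx)$, namely $\lambda\wedge\omega = (-1)^{\gr\omega\gr\lambda}\omega\wedge\lambda$, cancels the external sign and aligns the integrands, reducing the sum to a scalar multiple of $\bilin{[a,b]}{c} + \bilin{b}{[a,c]}$, which vanishes by invariance of $\bilin{-}{-}$ on $\g$.

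For the differential identity, the graded Leibniz rule for the de Rham differential gives
\be
\pairing{\dd x}{y} + (-1)^{\gr x}\pairing{x}{\dd y} = \down\tfrac12 \bilin{a}{b} \int_\Sigma \res_w\res_z \,\dd(\omega\wedge\lambda).
\nn\ee
The key observation is that $\res_w$ and $\res_z$ act only on the Laurent series coefficients in the variables $w$ and $z$, while $\dd$ acts on the simplicial variables; hence they commute, and the integrand equals $\dd\bigl(\res_w\res_z(\omega\wedge\lambda)\bigr)$. Applying Stokes' theorem to the $1$-chain $\Sigma$ converts this to an integral over $\del\Sigma = ((w=0),\Eta) - ((z=0),\Eta)$ evaluated at the boundary vertices, i.e., a sum of evaluations of $\res_w\res_z(\omega\wedge\lambda)$ at $(w=0)$ and $(z=0)$ with appropriate signs.

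The final step is simply to inspect the local algebras from \cref{ADdef}: at the vertex $(w=0)$ we have $\ADx((w=0)) = \CC[[w]] \oxC \CC((z))$, which contains no negative powers of $w$, so $\res_w$ vanishes there; and symmetrically $\res_z$ vanishes at $(z=0)$. Both boundary contributions therefore drop out, proving differential invariance. The main conceptual point to articulate with care is the use of Stokes' theorem in the Thom--Whitney setup, but this reduces to the classical Stokes' theorem for polynomial differential forms on the geometric realisation of the semisimplicial set $\Flag_\bul(\DDx)$; there is no genuine analytic obstacle. The perfect compatibility between the boundary conditions built into the $\Flag_\bul(\DDx)$-algebra $\ADx$ and the vanishing required at $\del\Sigma$ is precisely what makes the pairing a well-defined invariant form on $\gDx$.
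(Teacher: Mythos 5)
Your argument is correct and follows essentially the same route as the paper's proof: invariance of $\bilin--$ on $\g$ handles the bracket identity, and for the differential identity you use the Leibniz rule, commute the residues past $\dd$, apply Stokes on the $1$-chain $\Sigma$, and kill the boundary terms at the vertices $(w=0)$ and $(z=0)$ via the boundary conditions of \cref{gdxelements}, exactly as in the paper. The only difference is that you spell out the bracket identity on pure tensors, which the paper dismisses as clear.
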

\begin{proof}
The first part is clear, given the $\g$-invariance of $\bilin--$. 
For the second part, we recall that $\dd (a\ox \omega) = a\ox \dd \omega$ in our setting, and we have
\begin{align} 
&\pairing {a\ox \dd \omega} {b\ox \lambda} + (-1)^{\gr{a\ox \omega}} \pairing{a\ox\omega}{b \ox \dd\lambda} \nn\\
&= \down \half \bilin{a}{b} \int_\Sigma\res_w\res_z \dd \left(\omega \wedge \lambda\right) \nn\\
&= \down \half \bilin{a}{b} \res_w\res_z \left(\left(\omega \wedge \lambda\right)|_{w=0} - \left(\omega\wedge\lambda\right)|_{z=0}\right). \nn
\end{align}
Obviously both the pullbacks here $(\omega\wedge\lambda)|_{w=0}$ and $(\omega\wedge\lambda)|_{z=0}$ receive contributions only from the degree zero components of $\omega$ and $\lambda$. Both vanish after taking the residues $\res_w\res_z$, because $\omega$ and $\lambda$ obey the boundary conditions in \cref{gdxelements}.  
\end{proof}


\begin{prop}\label{prop: pairing}
The pairing $\pairing -- $ induces a non-degenerate pairing
\be H(\gDx) \ox H(\gDx) \to \down \CC, \nn\ee
with respect to which both $H(\gp)$ and $H(\gm)$ are isotropic.
\end{prop}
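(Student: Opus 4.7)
The plan is first to note that degree considerations eliminate most of the content of both claims, and then to identify what survives with a manifestly non-degenerate pairing on explicit representatives.

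First I would observe that $\pairing{-}{-}$ has degree zero and lands in $\down\CC$, which is concentrated in cohomological degree $1$. By \cref{cor: cohomology}, $H^\bul(\gDx)$ is supported in degrees $0$ and $1$ only, so the induced pairing on cohomology can be non-zero only on $H^0 \ox H^1$ and $H^1 \ox H^0$. This already implies both isotropy statements: $H^\bul(\gp) = \gp$ is concentrated in degree zero, so $\pairing{H(\gp)}{H(\gp)}$ lands in degree $0$ of $\down\CC$ and hence vanishes; similarly, by \cref{prop: H1 retract}, $H^\bul(\gm)$ is concentrated in degree $1$, so $\pairing{H(\gm)}{H(\gm)}$ lands in degree $2$ and vanishes.

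For non-degeneracy, I would work with explicit cocycle representatives. A class in $H^0(\gDx)$ is represented by a constant pair $x = (b\ox f,\, b\ox f)$ with $b\in\g$ and $f\in\CC[[w]]\oxC\CC[[z]]$, while \cref{prop: H1 retract} provides a class in $H^1(\gDx)$ of the form $y = \tfrac{1}{2}(a\ox g\,\dd s,\, -a\ox g\,\dd s)$ with $a\in\g$ and $g\in w^{-1}z^{-1}\CC[w^{-1},z^{-1}]$. Unpacking the definition of $\pairing{-}{-}$, the wedge product of the underlying $\ADx$-valued Thom--Whitney forms is $\tfrac{1}{2}fg\,\dd s$ on the edge $\bigl((w{=}0),\Eta\bigr)$ and $-\tfrac{1}{2}fg\,\dd s$ on the edge $\bigl((z{=}0),\Eta\bigr)$; integration over the chain $\Sigma = \bigl((w{=}0),\Eta\bigr) - \bigl((z{=}0),\Eta\bigr)$ then yields simply $fg \in \RRR$, and so
\be \pairing{x}{y} \;=\; \down\,\tfrac{1}{2}\bilin{b}{a}\,\res_w\res_z(fg). \nn\ee

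Finally I would observe that the right-hand side is the tensor product of $\bilin{-}{-}$ on $\g$ with the residue pairing
\be \CC[[w]]\oxC\CC[[z]] \,\ox\, w^{-1}z^{-1}\CC[w^{-1},z^{-1}] \to \CC,\quad (f,g)\mapsto \res_w\res_z(fg), \nn\ee
both of which are manifestly non-degenerate. The $H^1 \ox H^0$ component is handled symmetrically. I do not anticipate any serious obstacle: isotropy is forced by degree, and the only delicate point in the non-degeneracy step is the sign bookkeeping for the two edges that enter $\Sigma$.
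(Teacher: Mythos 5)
Your proposal is correct and follows essentially the same route as the paper: the paper likewise identifies $H^0(\gDx)$ with constant $0$-forms via $\Ip$ and $H^1(\gDx)$ with multiples of $(\dd s,-\dd s)$ via $\Im\circ i$, and then notes that $\pairing--$ restricts to a manifestly non-degenerate pairing between these while each piece is isotropic. Your version merely spells out the degree bookkeeping for isotropy and the explicit reduction to the residue pairing $\res_w\res_z(fg)$ tensored with $\bilin--$, which the paper leaves implicit; the sign and factor-of-$\tfrac12$ bookkeeping in your computation is right.
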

\begin{proof}
Recall that $H^0(\gDx) = \gp := \g\ox \CC[[w,z]]$ and the map $\Ip:\gp \to \gDx$ of \cref{prop: discx retract} maps elements of $\gp$ to constant $\gp$-valued $0$-forms on $\Flag\DDx$.
Similarly, the map 
\be H^1(\gDx) \cong \down \g \ox w^{-1} z^{-1} \CC[w^{-1},z^{-1}] \xrightarrow{i} \gm \xrightarrow{\Im} \gDx\nn\ee from \cref{prop: H1 retract} and \cref{prop: discx retract} maps elements of $H^1(\gDx)$ to multiples of the $1$-form $(\dd s,-\dd s)$. 
The pairing $\pairing{-}{-}$ restricts to a manifestly non-degenerate pairing between them:
\begin{align} H^0(\gDx) \ox  H^1(\gDx) &\to \down \CC\nn\\
\quad (x,y) &\mapsto \pairing{\Ip(x)}{\Im(i(y))} \nn
\end{align}
while $H^0(\gDx)$ and $H^1(\gDx)$ are isotropic. (This establishes the result, because $H(\gp) = H^0(\gDx)$ and $H(\gm) = H^1(\gDx)$.)  
\end{proof}
This completes the proof of \cref{thm: local mt}.
  
\subsection{The unpunctured polydisc}
Now we turn to the dg Lie algebra for the unpunctured formal polydisc: 
\be \gD := \Th(\g \ox \AD). \nn\ee
The next statement implies in particular that $\gD$ and $\gp:= \g \ox \CC[[w]]\oxC\CC[[z]]$ are quasi-isomorphic. The proof is an instructive warm-up for the global cases in the next section.
\begin{prop}\label{prop: disc retract}
There is a deformation retract of dg vector spaces
\be\begin{tikzcd} 
\gp \rar[shift left]{I}  & \lar[shift left]{P} \gD \ar[loop right,"h"] 
\end{tikzcd}\nn\ee
in which both $I$ and $P$ are maps of dg Lie algebras. 

Moreover, we may choose the map $P$ to be given by pull-back of the non-singular part to the $0$-simplex $\Eta$:
\be P(\omega) := \omega^{++}|_{\Eta} \nn\ee
\end{prop}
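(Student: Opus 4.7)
The plan is first to unpack the structure of $\gD = \Th(\g \ox \AD)$ in the same spirit as in the proof of \cref{prop: discx retract}. An element of $\gD$ is a tuple $(a_x)_x$ indexed by the simplices $x$ of $\Flag_\bul(\DD)$, with $a_x$ a polynomial differential form on the standard simplex of dimension $\dim x$, valued in $\g\ox \AD(x)$, and subject to the compatibility condition that for every inclusion $x \subset X$, the pullback of $a_X$ to the sub-simplex $x$ equals the image of $a_x$ under the algebra embedding $\AD(x) \into \AD(X)$. Concretely, $\Flag_\bul(\DD)$ has four vertices, five edges, and two 2-simplices, with the closed point $(0,0)$ as the distinguished central vertex at which the algebra $\AD((0,0)) = \CC[[w]]\oxC\CC[[z]] = \gp$ sits inside every other $\AD(x)$.

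The map $I : \gp \to \gD$ is the inclusion sending $f\in \gp$ to the constant Thom-Whitney form equal to $f$ on every simplex, using the natural embeddings $\gp \into \g\ox\AD(x)$. This is manifestly a dg Lie algebra map. The natural choice of $P$ is then the pullback to the distinguished central vertex $(0,0)$, namely $P(\omega) := a_{(0,0)}$; since the bracket in $\gD$ is componentwise and $a_{(0,0)}$ already lives in $\gp$, this is a Lie algebra map, and $P\circ I = \id_{\gp}$ holds on the nose.

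The remaining task is to construct the homotopy $h$. My plan is to contract $\Flag_\bul(\DD)$ onto $(0,0)$ and radially integrate forms along this contraction, exactly mirroring the strategy of \cref{prop: discx retract}. On an edge containing $(0,0)$, a 1-form $F(s)\dd s$ with $s=0$ at $(0,0)$ is sent to $\int_0^s F(s')\dd s'$; on the edges $((w{=}0),\Eta)$ and $((z{=}0),\Eta)$ one integrates relative to the endpoint $(w{=}0)$ (respectively $(z{=}0)$), which is itself connected to $(0,0)$ through its adjacent 2-simplex; on the two 2-simplices one integrates a 2-form radially toward the corner $(0,0)$. A routine per-simplex calculation then gives $[\dd, h] = \id - I\circ P$.

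The main obstacle will be ensuring that these simplexwise formulas fit together into a single cochain homotopy on $\gD$, i.e.\ that the pullback of $h(\omega)|_X$ to each face $x \subset X$ agrees with $h(\omega)|_x$ via the embedding $\AD(x) \into \AD(X)$. This is a compatibility check that is straightforward on the edges, but requires a careful choice of integration path on the 2-simplices so that the restrictions to the three boundary edges reproduce the edgewise $h$. Finally, the ``moreover'' reformulation $P(\omega) = \omega^{++}|_\Eta$ is obtained by noting that the non-singular part $\omega^{++}$, defined by the $(++)$-projection $\AD(x) \onto \CC[[w,z]]$ at each simplex $x$, is itself a Thom-Whitney form with values in the constant sheaf $\gp$, and its pullback to any vertex (and in particular to $\Eta$) agrees, modulo the image of $[\dd,h]$, with $a_{(0,0)}$. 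Compared with \cref{prop: discx retract}, the proof is simpler because there is no degree-one cohomology to account for, so the full retract collapses onto $\gp$.
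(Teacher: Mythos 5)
Your construction matches the paper's: $I$ is the constant embedding, $P$ is evaluation at the vertex $(0,0)$, and $h$ is the radial integration toward $(0,0)$, with the base points forced by the boundary conditions exactly as you say. The paper writes an element of $\gD$ as a pair $(\Phi,\Psi)$ of forms on the two $2$-simplices and gives the integral formulas explicitly; the gluing check you defer is handled there simply because those formulas restrict consistently to the shared edge $\bigl((0,0),\Eta\bigr)$, so this is not a real obstacle.

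The one point that needs tightening is the ``moreover'' step. The phrase ``agrees, modulo the image of $[\dd,h]$, with $a_{(0,0)}$'' is not literally correct: $I\bigl(\omega^{++}|_\Eta\bigr) - I\bigl(\omega|_{(0,0)}\bigr)$ is a constant degree-zero element of $\gD$, and no nonzero constant is exact there. What is true, and what the paper uses, is that the two cochain maps $I\circ P_{\mathrm{old}}$ and $I\circ P_{\mathrm{new}}$ are homotopic: their difference is $[\dd,k]$, where $k$ integrates the $++$ component along the edge joining $\Eta$ to $(0,0)$. Concretely, the paper replaces $h$ by $h_{\mathrm{new}}(\Phi) = h_{\mathrm{old}}(\Phi) + \int_1^0 f_s(s',0)^{++}\,\dd s'$ and verifies $[\dd,h_{\mathrm{new}}] = \id - I\circ P_{\mathrm{new}}$ directly; note also that $P_{\mathrm{new}}\circ I = \id$ still holds on the nose, since a constant in $\gp$ equals its own $++$ part. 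With that correction, your argument is the paper's.
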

\begin{proof}
It is helpful to sketch the idea first:
\be
\begin{tikzpicture} 
\tdplotsetmaincoords{0}{180}
\begin{scope}[rotate=45,xshift=-5cm,yshift=4cm,scale=2,tdplot_main_coords,local bounding box=D2,decoration={
    markings,
    mark=at position 0.5 with {\arrow{stealth}},
    mark=at position 0.75 with {\arrow{stealth}},
}]
\coordinate (uU) at (1,-1,0);
\coordinate (w=U)  at (1,0,0)  ;  
\coordinate (z=u) at (0,-1,0)  ;  
\coordinate (p1p1) at (0,0,0) ;   
\draw[thick,black,fill=blue,fill opacity=.1] (p1p1) -- (w=U) -- (uU)   --cycle; 
\draw[thick,black,fill=blue,fill opacity=.1] (p1p1) -- (z=u) -- (uU)  --cycle; 

\node[fill,circle,inner sep=1.5,draw,label= left:{$\{(0,0)\}$}]   (NuU)   at (uU)  {};
\node[fill,circle,inner sep=1.5,draw,label= right:{$(z=0)$}]   (Nz=u)  at (z=u) {};
\node[fill,circle,inner sep=1.5,draw,label= left:{$(w=0)$}]   (Nw=U)  at (w=U) {};
\node[fill,circle,inner sep=1.5,draw,label= right:{$\Eta$}] (Np1p1) at (p1p1)  {};
\draw[very thick,postaction={decorate},blue,shorten >= 2ex] (uU) -- ($(uU)!.3!(z=u)$)  -- ($(p1p1)!.3!(z=u)$)  ;
\draw[very thick,postaction={decorate},blue,shorten >= 2ex] (uU) -- ($(uU)!.7!(z=u)$)  -- ($(p1p1)!.7!(z=u)$)  ;
\draw[very thick,postaction={decorate},blue,shorten >= 2ex] (uU) -- ($(uU)!.6!(w=U)$)  -- ($(p1p1)!.6!(w=U)$)  ;
\end{scope}
\end{tikzpicture}
\ee
An element of $\gD$ is given by a pair of forms, one on each of the two 2-simplices of $\DD$,
\be \omega = ( \Phi, \Psi ) \nn\ee
valued in $\g\ox \CC((w))\oxC\CC((z))$. We give the definition of $h$ on $\Phi$; the definition on $\Psi$ is similar. We choose to use the coordinates $s$ and $u = t-s$, \cref{sec: simplex coords}. We have
\be \Phi(s,u) = f(s,u) + f_s(s,u) \dd s + f_u(s,u) \dd u + f_{su}(s,u) \dd s \wedge \dd u \nn\ee
for some coefficient functions. We define
\begin{align} h(\Phi)(s,u) &:= \left(\int_0^s f_s(s',u) \dd s'\right) + \left(\int_0^s f_{su}(s',u) \dd s'\right) \dd u \nn\\
&\quad + \left( \int_0^u f_u(0,u') \dd u'\right) \label{hdef}
\end{align}

Again, the choice of base point for these integrals is fixed by the fact that $\int_0^s f_s(s',u) \dd s'$ must obey a boundary condition at $s=0$ while $f_s(s,u)$ obeys no such condition (since $\dd s$ was zero on $s=0$). Note also that on the ``far edge'', i.e. $(z=0) \subset \Eta$, the boundary condition is empty since the space attached to this edge, $\g \ox \CC((w))\oxC\CC((z))$, is the same as that attached to the 2-simplex it borders. 

We then compute
\begin{align} (\dd\circ h(\Phi))(s,u) &= f_s(s,u) \dd s + \left(\int_0^s \del_u f_s(s',u) \dd s'\right) \dd u 
                                       + f_{su}(s,u) \dd s \wedge \dd u\nn\\
&\quad + f_u(0,u) \dd u \nn  
\end{align} 
while
\begin{align} (h\circ \dd(\Phi))(s,u) &=  h\Bigl(  (\del_s f)(s,u) \dd s + (\del_u f)(s,u) \dd u + (\del_s f_u - \del_u f_s) \dd s \wedge \dd u \Bigr) \nn\\ 
&= f(s,u) - f(0,u) + f_u(s,u) \dd u - f_u(0,u) \dd u -  \left(\int_0^s \del_u f_s(s',u) \dd s'\right) \dd u \nn\\
& \quad + f(0,u) - f(0,0) \nn
\end{align} 
so that
\begin{align} ([\dd,h] (\Phi))(s,u) &= \Phi(s,u) - f(0,0).  \nn
\end{align}
Then we define $P: \gD \to \gp$ to be the map $\omega \mapsto f(0,0) = \omega|_{\{(0,0)\}}$, picking out the pullback of $\omega$ to the vertex $\{(0,0)\}$, and $I: \gp \to \gD$ to be the embedding of an element of $\gp$ as a constant function. This ensures that $P \circ I = \id$ and that the equality above becomes
\be   [\dd,h] = \id - I \circ P, \nn\ee
which completes the proof that $\gp$ is a retract of $\gD$.

It remains to establish the ``moreover'' part of the proposition. Above, we chose to set $P(\omega) = \omega|_{\{(0,0)\}}$. 
In what follows, it will be helpful to note the following alternative choice for the maps $I,P,h$. We keep the definition of $I$. We let $P:\gD \to \gp$ be the map $\omega\mapsto f(s=1,u=0)^{++}= \omega^{++}|_{\Eta}$, picking out the pullback of the regular part of $\omega$ at the vertex $\Eta$. And we let $h$ be given by
\be h_{new}(\Phi) = h_{old}(\Phi) + \int_1^0 f_s(s',0)^{++} \dd s'\nn\ee
Then $(\dd\circ h(\Phi))(s,u)$ is unaltered (we have added a constant), while
while $(h\circ \dd(\Phi))(s,u)$ receives the extra term
\be \int_{1}^0 \del_s f(s',0)^{++} \dd s' = f(0,0)^{++} - f(1,0)^{++} = f(0,0) - f(1,0)^{++} \nn\ee
so that 
\begin{align} ([\dd,h] (\Phi))(s,u) &= \Phi(s,u) - f(s=1,u=0)^{++}  \nn
\end{align}
as we now need with our new definition of $P$. 
\end{proof}
\subsection{Pictorial notation for homotopies}\label{sec: pictures}
The proof above is a prototype for the sort of computation we shall need in many places below. Let us introduce an obvious pictorial notation, in which the homotopy $h$ from \cref{hdef} is denoted simply as
\be
\begin{tikzpicture} 
\tdplotsetmaincoords{0}{180}
\begin{scope}[rotate=45,xshift=-5cm,yshift=4cm,scale=1.5,tdplot_main_coords,local bounding box=D2,decoration={
    markings,
    mark=at position 0.5 with {\arrow{stealth}},
}]
\coordinate (uU) at (1,-1,0);
\coordinate (w=U)  at (1,0,0)  ;  
\coordinate (z=u) at (0,-1,0)  ;  
\coordinate (p1p1) at (0,0,0) ;   
\draw[fill=blue,fill opacity=.1] (p1p1) -- (w=U) -- (uU)   --cycle; 
\draw[fill=blue,fill opacity=.1] (p1p1) -- (z=u) -- (uU)   --cycle; 

\node[inner sep=1.5,label= left:{$\{(0,0)\}$}]   (NuU)   at (uU)  {};
\node[inner sep=1.5,label= above:{$(z=0)$}]   (Nz=u)  at (z=u) {};
\node[inner sep=1.5,label= below:{$(w=0)$}]   (Nw=U)  at (w=U) {};
\node[inner sep=1.5,label= right:{$\Eta$}] (Np1p1) at (p1p1)  {};

\draw[very thick,opacity=.3,postaction={decorate},blue] ($(uU)$)  -- ($(z=u)$) ;
\draw[very thick,opacity=.3,postaction={decorate},blue] ($(uU)$)  -- ($(w=U)$) ;
\draw[very thick,opacity=.3,postaction={decorate},blue] ($(uU)$)  -- ($(p1p1)$) ;
\foreach \r in {0.25,0.5,0.75}
{
\draw[very thick,opacity=.3,postaction={decorate},blue] ($(uU)!\r!(z=u)$)  -- ($(p1p1)!\r!(z=u)$) ;
\draw[very thick,opacity=.3,postaction={decorate},blue] ($(uU)!\r!(w=U)$)  -- ($(p1p1)!\r!(w=U)$) ;
}
\end{scope}
\end{tikzpicture}
\nn\ee
We shall occasionally also refer to $h$, somewhat loosely, as a \dfn{retract to the vertex $(0,0)$.} The precise paths we chose for the integrals were not crucial. One may verify that it is equally possible to make the choice
\be
\begin{tikzpicture} 
\tdplotsetmaincoords{0}{180}
\begin{scope}[rotate=45,xshift=-5cm,yshift=4cm,scale=1.5,tdplot_main_coords,local bounding box=D2,decoration={
    markings,
   mark=at position 0.75 with {\arrow{stealth}},
}]
\coordinate (uU) at (1,-1,0);
\coordinate (w=U)  at (1,0,0)  ;  
\coordinate (z=u) at (0,-1,0)  ;  
\coordinate (p1p1) at (0,0,0) ;   
\draw[fill=blue,fill opacity=.1] (p1p1) -- (w=U) -- (uU)   --cycle; 
\draw[fill=blue,fill opacity=.1] (p1p1) -- (z=u) -- (uU)   --cycle; 

\node[inner sep=1.5,label= left:{$\{(0,0)\}$}]   (NuU)   at (uU)  {};
\node[inner sep=1.5,label= above:{$(z=0)$}]   (Nz=u)  at (z=u) {};
\node[inner sep=1.5,label= below:{$(w=0)$}]   (Nw=U)  at (w=U) {};
\node[inner sep=1.5,label= right:{$\Eta$}] (Np1p1) at (p1p1)  {};

\draw[very thick,opacity=.3,postaction={decorate},blue] ($(uU)$)  -- ($(z=u)$) ;
\draw[very thick,opacity=.3,postaction={decorate},blue] ($(uU)$)  -- ($(w=U)$) ;
\draw[very thick,opacity=.3,postaction={decorate},blue] ($(uU)$)  -- ($(p1p1)$) ;
\foreach \r in {0.25,0.5,0.75}
{
\draw[very thick,opacity=.3,postaction={decorate},blue] ($(uU)!\r!(p1p1)$)  -- ($(z=u)!\r!(p1p1)$) ;
\draw[very thick,opacity=.3,postaction={decorate},blue] ($(uU)!\r!(p1p1)$)  -- ($(w=U)!\r!(p1p1)$) ;
}
\end{scope}
\end{tikzpicture}
\nn\ee
which amounts to replacing the definition \cref{hdef} of $h$ with
\begin{align}  & \left( \int_{0}^u f_u(s,u') \dd u'\right) - \left(\int_{0}^u f_{su}(s,u') \dd u'\right) \dd s \nn\\
   & \quad + \left(\int_0^s f_s(s',0) \dd s' \right).\nn 
\end{align}
In what follows, we shall not write out such integrals explicitly, and we shall suppress the details of routine manipulations similar to those in the proof above.


\section{Global homotopy Manin triple}\label{sec: global manin triple}
In this section we give the second of our two main examples of homotopy Manin triples: see \cref{thm: global mt}. We shall use throughout the pictorial notation for homotopies introduced in \cref{sec: pictures} above. 

The Manin triples of this section are defined by a collection of marked points in rectilinear space, as we now describe.

\subsection{Marked points}
We continue to let $w,z: \CC\times\CC \to \CC$ be the Cartesian coordinates.
Pick $N\geq 1$. 
Let $\zz = z_1,\dots,z_N$ be pairwise distinct points in $\CC$. 
Let $\ww = w_1,\dots,w_N$ be pairwise distinct points in $\CC$. 
Let $\Rectn$ denote the subset of $\Rect$ consisting of
\begin{itemize}
\item The closed points $(w_i,z_j)$ for all $i\neq j$, $i,j\in \{1,\dots,N\}$
\item The lines  $(w=w_i)$ for $i\in \{1,\dots,N\}$
\item The lines  $(z=z_i)$ for $i\in \{1,\dots,N\}$
\item The generic point $\Eta$.
\end{itemize}
We may sketch these data as follows:
\be\begin{tikzpicture}[baseline=-50]
\begin{scope}[scale=.3,local bounding box = Global]
\fill[gray!15] (1,-1) rectangle (16,11); 
\draw (0,0) -- (17,0) node[right] (z0) {$z=z_1$};
\draw (0,10) -- (17,10) node[right] (z8) {$z=z_N$};
\draw (0,3) -- (17,3) node[right] (zu) {$z=z_2$};
\draw (2,12) -- (2,-2) node[below] (wU) {$w=w_1$};
\draw (6.5,12) -- (6.5,-2) node[below] (w1) {$w=w_2$};
\draw (15,12) -- (15,-2) node[below] (w1) {$w=w_N$};
\draw (10,-2) node[below] () {$\dots$};
\draw (17,7.5) node[right] () {$\vdots$}; 
\draw[fill=black]  (2,3) circle (.2);\draw[fill=black]  (2,10) circle (.2);\draw[fill=white]  (2,0) circle (.2);
\draw[fill=white]  (6.5,3) circle (.2);\draw[fill=black]  (6.5,10) circle (.2);\draw[fill=black]  (6.5,0) circle (.2);
\draw[fill=black]  (15,3) circle (.2);
\draw[fill=white]  (15,10) circle (.2);
\draw[fill=black]  (15,0) circle (.2);
\end{scope}
\end{tikzpicture}\nn\ee
This gives rise to the semisimplicial subset  $\Flag_\bul(\Rectn)$ of $\Flag_\bul(\Rect)$. In contrast to the latter, it has finite sets of $n$-simplices, for each $n$.


\subsection{Sketches of $\Flag_\bul(\Rectn)$}\label{sec: sketches}

It is helpful to be able to visualise at least parts of the semisimplicial set  $\Flag_\bul(\Rectn)$. 
For example we may restrict our attention to the following lines and points, and draw the corresponding simplices of $\Flag_\bul(\Rectn)$:
\be\begin{tikzpicture}[baseline=-50]
\begin{scope}[scale=.3,local bounding box = Global]
\fill[gray!15] (1,-1) rectangle (10,4); 
\draw (0,0) -- (11,0) node[right] (z0) {$z=z_1$};
\draw (0,3) -- (11,3) node[right] (zu) {$z=z_3$};
\draw (2,5) -- (2,-2) node[below=-.1cm] (wU) {$w=w_1$};
\draw[] (9,5) -- (9,-2) node[below] (w1) {$w=w_2$};
\draw[fill=black]  (2,3) circle (.2);
\draw[fill=white]  (2,0) circle (.2);
\draw[fill=black]  (9,3) circle (.2);
\draw[fill=black]  (9,0) circle (.2);
\end{scope}
\end{tikzpicture}\qquad\begin{tikzpicture}
\tdplotsetmaincoords{0}{180}
\begin{scope}[scale=1.5,tdplot_main_coords,local bounding box=G]
\threefourpiconetwothree
\end{scope}
\end{tikzpicture}\nn\ee
At least in the case of three marked points, it is actually possible draw the whole semisimplicial set  $\Flag_\bul(\Rect(3))$:
\Rgpicsthree

\subsection{Global dg Lie algebra $\gN$}\label{sec: gNdef}
Let $\ARN$ be the $\Flag_\bul(\Rectn)$-object in commutative algebras given as follows.
Let us write $\CC(w)_{\ww}^\8$ for the $\CC$-algebra
\be \CC(w)_{\ww}^\8:= \left\{\parbox{0.66\textwidth}{rational expressions in $w$ vanishing at $\8$ and with poles at most at the points $\ww= \{w_1,\dots,w_N\}$}\right\}.\nn\ee 
We assign, to every simplex of $\Flag_\bul(\Rectn)$, the commutative algebra
\be \CC(w)_{\ww}^\8 \oxC \CC(z)_{\zz}^\8 \nn\ee
with only the following exceptions: 
\be \ARN( \{(w_i,z_j)\} ) := \CC(w)_{\ww\setminus\{w_i\}}^\8 \oxC \CC(z)_{\zz \setminus \{z_j\}}^\8 \nn\ee
\be \ARN( \{(w_i,z_j)\} \subset \{z=z_j\}) := \ARN( \{ z=z_j\}) := \CC(w)_{\ww}^\8 \oxC \CC(z)_{\zz \setminus\{z_j\}}^\8 \nn\ee
\be \ARN( \{(w_i,z_j)\} \subset \{w=w_i\}) := \ARN( \{ w=w_i\}) := \CC(w)_{\ww\setminus\{w_i\}}^\8 \oxC \CC(z)_{\zz}^\8\nn\ee
for all $i,j$ such that the given flag belongs to $\Flag_\bul(\Rectn)$. 

We continue to let $\g$ be a finite-dimensional simple Lie algebra.
Let $\gN$ denote the dg Lie algebra 
\be \gN := \Th(\g \ox \ARN).\nn\ee
Note that the exceptions above are precisely the simplices on the boundary of $\Flag_\bul(\Rectn)$. Thus, concretely, $\gN$ is the dg algebra of polynomial differential forms on $\Flag_\bul(\Rectn)$, valued in $\g\ox \CC(w)_{\ww}^\8 \CC(z)_{\zz}^\8$, subject to these boundary conditions. 

\subsection{Local dg Lie algebras}
Let us introduce the dg Lie algebras
\be \gloc := \bigoplus_{i=1}^N \gp(w_i,z_i) = \bigoplus_{i=1}^N \g \ox \CC[[w-w_i,z-z_i]],\qquad \glocx := \bigoplus_{i=1}^N \gDpx {{w_i,z_i}} \nn\ee
and define two maps of dg Lie algebras,
\be \iglob: \gN \to \glocx, \qquad \iloc: \gloc \to \glocx. \nn\ee
The map $\iloc$ is given, summand by summand, as in \cref{sec: 2disc} above. To define $\iglob$, observe that $\Flag_\bul(\DDpx {{w_i,z_i}})$ is a semisimplicial subset of $\Flag_\bul(\Rectn)$, for each $i=1,\dots,N$. We get the restriction of $\ARN$ to a $\Flag_\bul(\DDpx {{w_i,z_i}})$-algebra $\ARN|_{\Flag_\bul(\DDpx 0)}$, and there is an evident map of $\Flag_\bul(\DDpx {{w_i,z_i}})$-algebras 
\be \ARN|_{\Flag_\bul(\DDpx {{w_i,z_i}})} \to \ADpx {{w_i,z_i}}\nn\ee
given by taking formal Laurent expansions. 
Hence, by \cref{lem: sca1} and \cref{lem: sca2} we get the map of semicosimplicial algebras
\be \sca \ARN \to \sca \ARN|_{\Flag_\bul(\DDpx {{w_i,z_i}})} \to \sca \ADpx {{w_i,z_i}}\nn\ee
and therefore, by the functoriality of $\Th$, a map of dg Lie algebras 
\be \iglob^i : \gglob \to \gDpx {{w_i,z_i}}\label{iglobi}\ee
for each marked point $(w_i,z_i)$. 
The resulting diagonal map is $\iglob$:
\be \iglob := \bigoplus_{i=1}^N \iglob^i \nn\ee

\subsection{The pairing}\label{sec: pairing}
For each marked point $(w_i,z_i)$, we have an invariant bilinear form $\pairing{-}{-}^i$ on $\gDpx{{w_i,z_i}}$ defined as in \cref{sec: pairing} and \cref{prop: pairing}. 
Let us use the same notation $\pairing{-}{-}$ for the resulting diagonal pairing 
\be \pairing {A} {B} := \sum_{i=1}^N \pairing{A_i}{B_i}^i \nn\ee
between two elements $A= (A_i)$ and $B=(B_i)$ of  $\glocx = \bigoplus_{i=1}^N\gDpx{{w_i,z_i}}$.

\subsection{Manin triple} The main result of this section is the following.
\begin{thm}\label{thm: global mt} The data 
\be (\glocx,\gloc,\gglob,\iloc,\iglob,\pairing --)\nn\ee 
constitute a homotopy Manin triple in dg Lie algebras, in the sense of \cref{defn: homotopy manin triple}.
\end{thm}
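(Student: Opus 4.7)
The plan is to verify the four conditions of \cref{defn: homotopy manin triple} in sequence. Conditions (\ref{symmetric invariant}) and isotropy (\ref{isotropic}) for $\gloc$ reduce immediately to the local statements of \cref{thm: local mt}: the pairing and map $\iloc$ are both defined diagonally over the marked points, so graded symmetry, invariance, and isotropy of $\iloc(\gloc)$ all hold summand by summand by \cref{prop: inv} and \cref{thm: local mt}.

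For condition (\ref{sum}), I plan to construct an explicit strong deformation retract exhibiting $\gloc \oplus \gglob$ as a retract of $\glocx$, extending the local strategy of \cref{prop: discx retract} across the semisimplicial set $\Flag_\bul(\Rectn)$. The underlying algebraic fact is a simultaneous partial fraction decomposition: a tuple of formal Laurent expansions, one at each marked point $(w_i,z_i)$, splits into a tuple of regular pieces absorbed into $\gloc$, a tuple of mixed pieces that interpolate along the lines $(w=w_i)$ and $(z=z_j)$ as Thom-Whitney forms on $\Flag_\bul(\Rectn)$, and a common ``doubly-negative'' piece that is the local expansion of a single element of $\CC(w)_\ww^\8 \oxC \CC(z)_\zz^\8$ absorbed into $\gglob$. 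The homotopy on $\glocx$ is built by integrating outward from each marked point $(w_i,z_i)$ along the rays of $\Flag_\bul(\Rectn)$, using the pictorial calculus of \cref{sec: pictures}; the required compatibility at lines shared between two marked points follows from the boundary conditions defining $\gglob$. Alternatively, one can verify that $(\iloc,\iglob)$ is a quasi-isomorphism of dg vector spaces by computing cohomologies via \cref{cor: cohomology} and invoking the homotopy kernel perspective of \cref{prop: hoker}, then appealing to \cref{rem: H}.

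Isotropy of $\gglob$ is the key genuinely global step. For $A,B \in \gglob$, each contribution $\pairing{\iglob^i(A)}{\iglob^i(B)}$ extracts residues of local expansions of a globally defined element of $\CC(w)_\ww^\8 \oxC \CC(z)_\zz^\8$. Summing these contributions across all marked points is equivalent to integrating a closed form over a fundamental chain of $\Flag_\bul(\Rectn)$, which vanishes by a Stokes/residue argument: elements of $\CC(w)_\ww^\8 \oxC \CC(z)_\zz^\8$ have no residues at infinity in either coordinate. This is the two-dimensional rectilinear analog of the classical identity $\sum_i \res_{x=a_i} f(x)g(x) = 0$ underlying the rational Manin triple in dimension one. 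Non-degeneracy (\ref{nondegen}) then follows as in \cref{prop: pairing} by computing cohomologies: $H^0(\gloc)$ and $H^1(\gglob)$ sit as complementary subspaces of $H(\glocx)$ and pair perfectly under the induced pairing.

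The main obstacle will be condition (\ref{sum}): while each local retraction is handled by \cref{prop: discx retract}, assembling them globally requires a consistent choice of retraction directions across $\Flag_\bul(\Rectn)$ that respects the boundary conditions of $\gglob$ along every line shared between two marked points. In practice several inequivalent retractions will exist, corresponding pictorially to different choices of outward flow on $\Flag_\bul(\Rectn)$, and one expects the resulting triangular decompositions to differ by coboundaries rather than on the nose—which is already visible in the local case and will likely force the use of homotopy-equivalence (rather than strict deformation retract) data in the global statement.
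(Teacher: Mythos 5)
The step that fails is your isotropy argument for $\gglob$. The pairing $\pairing{\iglob(-)}{\iglob(-)}$ only evaluates double residues at the \emph{diagonal} marked points $(w_i,z_i)$, and there is no two-dimensional analogue of $\sum_i \res_{x=a_i} f(x)g(x)=0$ for such diagonal sums: for instance
\be \sum_{i=1}^N \res_{w=w_i}\res_{z=z_i}\Bigl(\frac{1}{w-w_1}\,\frac{1}{z-z_1}\Bigr) \;=\; 1 \;\neq\; 0, \nn\ee
even though $\frac{1}{w-w_1}\ox\frac{1}{z-z_1}$ lies in $\CC(w)_{\ww}^\8\oxC\CC(z)_{\zz}^\8$ and vanishes at infinity in both variables. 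The classical vanishing would require summing over the whole grid of points $(w_i,z_j)$, whereas the pairing sees only the diagonal, so ``no residues at infinity in either coordinate'' does not give you a Stokes-type cancellation. Indeed one should not expect chain-level vanishing at all, only vanishing up to homotopy, which by \cref{rem: H} is equivalent to isotropy of the image of $H(\gglob)$ in $H(\glocx)$. The paper obtains this by showing that $H(\gglob)$ is concentrated in cohomological degree one: it constructs an explicit deformation retract of $\gglob$ onto $H^1(\glocx)$, using a homotopy that contracts the off-diagonal terms of the partial fraction decomposition; isotropy is then automatic for degree reasons, since the pairing takes values in $\down\CC$. Alternatively, once your condition (\ref{sum}) is genuinely established, $H^0(\gglob)=0$ follows formally (the degree-zero cohomology of $\glocx$ is already exhausted by $\gloc$), and the same degree argument applies. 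Either way, the residue-theorem step must be replaced by some computation of $H(\gglob)$, which you do not supply.

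On condition (\ref{sum}): with these models a strong deformation retract does not exist on the nose, and the paper proves only a homotopy equivalence (\cref{thm: he}). With the natural choices, $\pglob$ assembles the doubly-polar parts of the local expansions into a global form and $\ploc$ takes the regular remainder, but the composite $P\circ I$ then has a nonzero off-diagonal component $\ploc\circ\iglob:\gglob\to\gloc$, supported on the partial-fraction terms $\mu_{k\ell}$ with $k\neq\ell\neq i\neq k$, which is only null-homotopic. So your closing hedge is correct; a strict retract is recovered in the paper only after enlarging the local side by unpunctured polydiscs at the off-diagonal points $(w_i,z_j)$ (\cref{thm: global retract}). Note also that the doubly-negative parts at distinct marked points are distinct (polynomials in the different variables $(w-w_i)^{-1},(z-z_i)^{-1}$), so there is no single ``common'' piece; $\pglob$ sums their global rational avatars. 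Your treatment of conditions (\ref{symmetric invariant}) and (\ref{nondegen}), and of isotropy of $\gloc$, is fine and agrees with the paper's reduction to \cref{prop: inv} and \cref{prop: pairing}.
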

\begin{proof}
Most of the rest of this section is devoted to establishing condition (\ref{sum}), the homotopy equivalence of $\gglob\oplus \gloc \simeq \glocx$, which is \cref{thm: he} below. For condition (\ref{symmetric invariant}) and (\ref{nondegen}) there is essentially nothing new to check, given \cref{prop: inv} and \cref{prop: pairing}. For the isotropy condition (\ref{isotropic}), we once more choose to establish the equivalent statements about cohomologies from \cref{rem: H}. We do so in \cref{sec: cohomology and pairing}. 
\end{proof}

We have the map from the direct sum of $\gglob$ and $\gloc$ as dg vector spaces:
\be I = (\iglob,\iloc) : \gglob \oplus \gloc \to \glocx .\nn\ee
Let us stress that this is not a map of dg Lie algebras from the direct sum of $\gglob$ and $\gloc$ as dg Lie algebras, for the same reason that the analogous map in the usual one-dimensional case is not a map of Lie algebras: the images of $\gglob$ and $\gloc$ in $\glocx$ are not mutually commuting.

The following statement justifies our definition of $\gN$: it establishes that, up to homotopies, $\gN$ provides a dg vector space complement to $\gloc$ in $\glocx$.

\begin{thm}\label{thm: he}
This map $I$ is a homotopy equivalence of dg vector spaces. That is, there is a homotopy equivalence of dg vector spaces:
\be\begin{tikzcd}[column sep = huge]
\ar[loop left,"\hglob+\hoffdiag"] \gN \oplus \gloc \rar[shift left]{I}  & \lar[shift left]{P} \glocx \ar[loop right,"\hlocx"] 
\end{tikzcd}\nn\ee
\end{thm}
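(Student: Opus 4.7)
My plan is to reduce the theorem to the local situation handled by \cref{prop: discx retract}, by first defining $\hlocx$ and then constructing $\hglob$, $\hoffdiag$, and $P$ explicitly.

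The first step is to define $\hlocx := \bigoplus_{i=1}^N h_i$, where each $h_i$ is the homotopy on $\gDpx{{w_i,z_i}}$ supplied by \cref{prop: discx retract}. Writing $\gp_i := \g \ox \CC[[w-w_i]]\oxC\CC[[z-z_i]]$ and $\gm_i$ for the corresponding negative part, summing the local retracts yields a deformation retract of $\glocx = \bigoplus_i \gDpx{{w_i,z_i}}$ onto $\gloc \oplus \bigoplus_i \gm_i$, with projection $(\bar P_+, \bar P_-)$. This reduces the theorem to showing that the composition $\bar P_- \circ \iglob : \gN \to \bigoplus_i \gm_i$ can be completed to a homotopy equivalence, and to assembling the retract data compatibly with $\hlocx$.

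The second step is to construct a section $s : \bigoplus_i \gm_i \to \gN$. Since each $\gm_i$ has cohomology concentrated in degree one equal to $\g \ox (w-w_i)^{-1}(z-z_i)^{-1}\CC[(w-w_i)^{-1},(z-z_i)^{-1}]$ by \cref{prop: H1 retract}, each such rational expression lifts naturally to a global rational function in $\CC(w)_{\ww}^{\8}\oxC\CC(z)_{\zz}^{\8}$ whose poles lie only on $w=w_i$ and $z=z_i$. I paint this function as the constant coefficient of the one-form $(\dd s,-\dd s)$ on the two $2$-simplices of $\Flag_\bul(\Rectn)$ containing $(w_i,z_i)$, and extend by zero on all other $2$-simplices. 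Checking the boundary conditions defining $\gN$ follows immediately since only the prescribed marked point is a pole.

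The third step is to construct $\hglob : \gN \to \gN$ as a combination of ``retract-to-vertex'' homotopies in the spirit of \cref{prop: disc retract} and the pictures of \cref{sec: pictures}: for every $2$-simplex $(w_i,z_j)\subset (\text{line})\subset\Eta$ of $\Flag_\bul(\Rectn)$, I retract forms along paths emanating from the closed vertex $(w_i,z_j)$. The verification $[\dd,\hglob] = \id - s\circ\bar P_-\circ\iglob$ then proceeds by the same local computation as in the proof of \cref{prop: disc retract}, repeated simplex by simplex. Setting $P := (s\circ \bar P_-, \bar P_+): \glocx \to \gN \oplus \gloc$ and $\hoffdiag$ to compensate for the residual mismatch between $P\circ I$ and the identity on $\gN\oplus\gloc$, one then verifies the two homotopy identities by direct computation.

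The main obstacle will be the global coherence of the retraction paths in the third step: each line $(w=w_i)$ contains $N$ marked points and bounds $N$ distinct $2$-simplices, each retracting toward its own marked point, and compatibility of these retractions along the shared edges of $\Flag_\bul(\Rectn)$ must be checked. The explicit form of $\hoffdiag$ is essentially forced by tracking residual boundary terms from this verification and absorbing them, so careful bookkeeping rather than any genuinely new idea is the real work.
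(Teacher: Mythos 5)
Your overall strategy (reduce to the local retract of \cref{prop: discx retract}, then compare $\bigoplus_i \gm^i$ with $\gN$) is reasonable and close in spirit to the paper, but two of your steps contain genuine gaps. First, your section $s:\bigoplus_i\gm^i\to\gN$ is not well defined as stated. You only describe it on degree-one cohomology representatives, whereas it must be defined on arbitrary elements of each $\gm^i$ (pairs of forms on the two edges of $\Flag_\bul(\DDpx{{w_i,z_i}})$); and ``extend by zero on all other $2$-simplices'' does not produce an element of the Thom--Whitney complex: the relevant edges are $\bigl((w=w_i),\Eta\bigr)$ and $\bigl((z=z_i),\Eta\bigr)$ (there are no simplices of $\Flag_\bul(\Rectn)$ containing the removed point $(w_i,z_i)$), and these edges bound the $2$-simplices $\bigl((w_k,z_i),(z=z_i),\Eta\bigr)$, $\bigl((w_i,z_\ell),(w=w_i),\Eta\bigr)$, so a form that is nonzero on the edge but zero on the adjacent $2$-simplices violates the compatibility condition \cref{cpqs}. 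The correct extension is the one the paper uses for $\pglob$ in \cref{pglob}: propagate by the value at $\Eta$ onto the far simplices, and check the boundary conditions at the off-diagonal vertices using the vanishing of $\gm^i$-elements at the endpoints of their edges.

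Second, and more seriously, the construction of $\hglob$ is where the real content of the theorem lies, and ``retract forms along paths emanating from the closed vertex $(w_i,z_j)$, simplex by simplex'' does not define a map on $\gN$: two $2$-simplices sharing an edge of the form $\bigl((line),\Eta\bigr)$ would be retracted toward different closed vertices, and the resulting forms disagree on the shared edge. You correctly flag this coherence problem as the main obstacle, but the idea that resolves it is missing from your proposal, and it is not mere bookkeeping: one must first take the partial-fraction decomposition $\mu=\sum_{i,j}\mu_{ij}$ of \cref{pfrac} and treat the components differently. The off-diagonal $\mu_{k\ell}$ vanish (by the boundary conditions of $\gN$) on enough edges that a globally coherent retraction onto the single vertex $(w_k,z_\ell)$ exists, giving $[\dd,\hglob]\mu_{k\ell}=\mu_{k\ell}$; but the diagonal $\mu_{ii}$ cannot be contracted to any vertex at all, since the locus where it is forced to vanish is not contractible --- it must instead be retracted onto the subcomplex $\Flag_\bul(\DDpx{{w_i,z_i}})$, which is exactly how the term $\pglob\circ\iglob$ (your $s\circ\bar P_-\circ\iglob$) reappears on the right-hand side. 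Relatedly, your choice $\ploc=\bar P_+$ without first subtracting the global part (the paper uses $\wt\omega=\omega-\iglob\circ\pglob(\omega)$ in \cref{ploc}) means the naive $\hlocx=\bigoplus_i h_i$ does not satisfy $\id-I\circ P=[\dd,\hlocx]$: the re-expansions at $(w_i,z_i)$ of the other points' polar parts produce extra terms that your homotopies as described do not absorb, and $\hoffdiag$ must then also kill the diagonal contributions $\iota_{w-w_i,z-z_i}\mu_{jj}|_\Eta$, $j\neq i$, not only the off-diagonal ones. All of this can be repaired, but it requires precisely the decomposition-and-retraction scheme that your proposal leaves unspecified.
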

\begin{proof}
We shall construct a map of cochain complexes 
\be \ptot = \pglob \oplus \ploc : \glocx \to \gglob \oplus \gloc \nn\ee
inverse to $\itot$ up to homotopies. We shall first define $\pglob$ and then check that $\pglob \circ \iglob$ is homotopic to the identity on $\gglob$. Then we shall define $\ploc$ and check the remaining homotopy relations. 

Our first step is to define the map $\pglob$. 
Morally, the idea here in our case in dimension two is the same as in the case of dimension one from, e.g., \cite{FFR}: we want to use the ``singular parts'' of an element of $\glocx$ to construct an element of $\gglob$.

An element of $\omega \in \glocx$ is a tuple $\omega = (\omega_{i})_{i=1}^N$, $\omega_{i} \in \gDpx{{w_i,z_i}}$, and we shall define $\pglob(\omega)$ summand by summand,
\be \pglob(\omega) := \sum_{i=1}^N \pglob^i(\omega_i) ,\qquad \pglob^i: \gDpx{{w_i,z_i}} \to \gglob.\nn\ee
Given $\omega_{i}\in \gDpx{{w_i,z_i}}$ we have, just in \cref{omegadecomp}, the decomposition
\be \omega_{i} = \omega_{i}^{++} + \omega_{i}^{-+} + \omega_{i}^{+-} + \omega_{i}^{--} \label{omegaidecomp}\ee
coming from the decomposition of the vector space $\CC((w-w_i))\oxC\CC((z-z_i))$ into the parts polar/regular parts with respect to each of the local coordinates., $w-w_i$ and $z-z_i$.
In particular, the ${}^{--}$ part can be interpreted as a rational function vanishing at infinity, via the embedding of commutative algebras
\be (w-w_i)^{-1} (z-z_i)^{-1} \CC[(w-w_i)^{-1},(z-z_i)^{-1}] \into 
\CC(w)_{\ww}^\8 \oxC \CC(z)_{\zz}^\8.\nn\ee
Now, we have
\be \omega_i^{--}(s) = (\phi_i^{--}(s), \psi_i^{--}(s)) \nn\ee
as in \cref{gdxelements}, and we define $\pglob^i(\omega_i)$ to be the element given as follows,
\be\label{pglob} \pglob^i(\omega_i) := 
\begin{tikzpicture}[baseline=0]
\tdplotsetmaincoords{0}{180}
\begin{scope}[xshift=-5cm,scale=2.2,tdplot_main_coords,local bounding box=G]
\threefourpicikl
\node (N{0wi }{w=wi}{p1p1}) at ({0wi }{w=wi}{p1p1}) {$\psi_i^{--}(1)$}; 
\node (N{0wi }{z=0 }{p1p1}) at ({0wi }{z=0 }{p1p1}) {$\phi_i^{--}(1)$}; 
\node (N{0U  }{z=0 }{p1p1}) at ({0U  }{z=0 }{p1p1}) {$\phi_i^{--}(t)$};
\node (N{0U  }{w=U }{p1p1}) at ({0U  }{w=U }{p1p1}) {$\phi_i^{--}(s)$};
\node (N{uwi }{z=u }{p1p1}) at ({uwi }{z=u }{p1p1}) {$\psi_i^{--}(s)$}; 
\node (N{uwi }{w=wi}{p1p1}) at ({uwi }{w=wi}{p1p1}) {$\psi_i^{--}(t)$}; 
\end{scope}
\end{tikzpicture}
\ee
for each $k,\ell \neq i$. If $k=\ell$ then the lower right square is absent. 

Observe that this obeys the continuity conditions on internal edges: in particular, along the edges of the form $\bigl( (w_k,z_\ell) ,  \Eta\bigr)$ it is continuous because $\phi^{--}(1) = f^{--}(1) = g^{--}(1) = \psi^{--}(1)$. 
Note also why $\pglob(\omega_i)$ obeys the boundary conditions: since $\omega_i^{--}$ has poles only at $w=w_i$ and $z=z_i$, the boundary conditions are non-trivial only on the edges of the form $\bigl( (w_k,z_i), (z=z_i)\bigr)$ and $\bigl((w_i,z_\ell), (w=w_i)\bigr)$, and here they are obeyed because $\phi^{--}(0) = 0$ and $\psi^{--}(0)=0$, by the boundary conditions on $\omega_i^{--}$ itself. 

Let us stress that while $\iglob:\gglob \to \glocx$ is a map of dg Lie algebras, $\pglob: \glocx \to \gglob$ is a map of dg vector spaces only.

\begin{lem}\label{lem: hglob}
There exists a homotopy 
$\hglob: \gglob \to \gglob$ such that 
\be \id_{\gglob} - \pglob \circ \iglob = \dd \circ \hglob + \hglob \circ \dd  \nn\ee
holds as an equality of cochain maps $\gglob\to \gglob$.
\end{lem}
\begin{proof}
We have the decomposition of vector spaces
\be \CC(w)_{\ww}^\8 \oxC \CC(z)_{\zz}^\8 \cong_\CC \bigoplus_{i,j}  
(w-w_i)^{-1} (z-z_j)^{-1} \CC[(w-w_i)^{-1},(z-z_j)^{-1}]\nn\ee
coming from taking partial fractions in each global coordinate, $z$ and $w$. In this way 
an element $\mu \in \gN$ has partial fraction decomposition
\be \mu = \sum_{i,j=1}^N \mu_{ij} \label{pfrac}\ee
where $\mu_{ij}$ is a polynomial differential form on $\ARN$ with coefficents in $(w-w_i)^{-1} (z-z_j)^{-1} \CC[(w-w_i)^{-1},(z-z_j)^{-1}]$.
We shall define $\hglob(\mu)$ summand by summand,
\be \hglob(\mu) := \sum_{i,j=1}^N \hglob^{ij}(\mu_{ij}) \label{hglob}\ee 
The argument is similar to that in the proof of \cref{prop: disc retract}, and we shall use the pictorial notation from \cref{sec: pictures}.

First consider a summand $\mu_{k\ell}$ with $k\neq \ell$. By definition of $\pglob$, $\pglob\circ\iglob(\mu_{k\ell}) = 0$ vanishes, so we must arrange our homotopy to contract $\id(\mu_{k\ell}) = \mu_{k\ell}$ to zero (just as, in \cref{prop: discx retract}, we had to contract $\omega^{\pm\mp}$ to zero).
The boundary conditions mean that $\mu_{k\ell}$ must vanish when pulled back to any of the edges drawn as thick dashed lines in the sketch at the left below, and we  define $\hglob^{k\ell}$ to be the map drawn on the right:
\be 
\begin{tikzpicture}[baseline=0]
\tdplotsetmaincoords{0}{180}
\begin{scope}[xshift=-5cm,scale=1.301,tdplot_main_coords,local bounding box=G]
\fourfourpicijkl
\draw[ultra thick,dashed] (uwi) -- (0wi) -- (0U);
\end{scope}
\end{tikzpicture}
\qquad  h_{k\ell}(\mu) := 
\begin{tikzpicture}[baseline=0]
\tdplotsetmaincoords{0}{180}
\begin{scope}[xshift=-5cm,scale=1.301,tdplot_main_coords,local bounding box=G,decoration={
    markings,
    mark=at position 0.25 with {\arrow{stealth}},
    mark=at position 0.50 with {\arrow{stealth}},
    mark=at position 0.75 with {\arrow{stealth}},
}]
\fourfourpicijkl
\draw[very thick,opacity=.3,postaction={decorate},blue] (0wi) -- (uwi);
\draw[very thick,opacity=.3,postaction={decorate},blue] (0wi) -- (0U);
\draw[very thick,opacity=.3,postaction={decorate},blue] (0wi) -- (p1p1);
\draw[very thick,opacity=.3,postaction={decorate},blue] (p1p1) -- (z=u);
\draw[very thick,opacity=.3,postaction={decorate},blue] (p1p1) -- (w=U);
\draw[very thick,opacity=.3,postaction={decorate},blue] (p1p1) -- (uU);
\foreach \r in {0.25,0.5,0.75}
{
\draw[very thick,opacity=.3,postaction={decorate},blue] ($(0wi)!\r!(p1p1)$)  -- ($(uwi)!\r!(z=u)$) ;
\draw[very thick,opacity=.3,postaction={decorate},blue] ($(0wi)!\r!(p1p1)$)  -- ($(0U)!\r!(w=U)$) ;
}
\end{scope}
\begin{scope}[xshift=-5cm,scale=1.301,tdplot_main_coords,local bounding box=G,decoration={
    markings,
    mark=at position 0.55 with {\arrow{stealth}}}]
\foreach \r in {0.25,0.5,0.75}
{
\draw[very thick,opacity=.3,postaction={decorate},blue] ($(uU)!\r!(p1p1)$)  -- ($(uU)!\r!(z=u)$) ;
\draw[very thick,opacity=.3,postaction={decorate},blue] ($(uU)!\r!(p1p1)$)  -- ($(uU)!\r!(w=U)$) ;
}
\end{scope}
\end{tikzpicture}
\nn\ee
Here we have sketched part of the semisimplicial set $\Flag_\bul(\Rectn)$ corresponding to some $i,j$ with $i\neq j$ and $i,j \notin\{k,\ell\}$. Special cases occur when $k=j$ or $i=\ell$ or both, and when $i=j$, but these special cases just correspond to omitting one or more of the squares, and in a way which, one sees, does not obstruct us in retracting back to the point $(w_k,z_\ell)$ as above.  
One may then verify, by direct calculations similar to those in \cref{prop: disc retract}, that
\be [\dd,\hglob](\mu_{k\ell}) = \mu_{k\ell} - \mu_{k,\ell}|_{(w_k,z_\ell)} = \mu_{k\ell} - 0 = \mu_{k\ell}\nn\ee
as we wanted. 

It remains to consider the diagonal summands $\mu_{ii}$. The new feature is that the semisimplicial set of edges of $\Flag_\bul(\Rectn)$ on which $\mu_{ii}$ vanishes is no longer contractible to a single vertex: 
\be 
\begin{tikzpicture}[baseline=0]
\tdplotsetmaincoords{0}{180}
\begin{scope}[xshift=-5cm,scale=1.301,tdplot_main_coords,local bounding box=G,decoration={
    markings,
    mark=at position 0.75 with {\arrow{stealth}},
}]
\threefourpicikl
\draw[ultra thick,dashed] (z=u) -- (uwi);
\draw[ultra thick,dashed] (w=U) -- (0U);
\end{scope}
\end{tikzpicture}\nn\ee
(if $k=\ell$ here then the lower right square is absent). We set  
\be \hglob^{ii}(\mu) := 
\begin{tikzpicture}[baseline=0]
\tdplotsetmaincoords{0}{180}
\begin{scope}[xshift=-5cm,scale=1.301,tdplot_main_coords,local bounding box=G,decoration={
    markings,
    mark=at position 0.75 with {\arrow{stealth}},
}]
\threefourpicikl
\draw[very thick,opacity=.3,postaction={decorate},blue] (z=u) -- (uwi);
\draw[very thick,opacity=.3,postaction={decorate},blue] (w=U) -- (0U);
\draw[very thick,opacity=.3,postaction={decorate},blue] (p1p1) -- (0wi);
\draw[very thick,opacity=.3,postaction={decorate},blue] (p1p1) -- (0wi);
\foreach \r in {0.25,0.5,0.75}
{
\draw[very thick,opacity=.3,postaction={decorate},blue] ($(z=u)!\r!(p1p1)$)  -- ($(uwi)!\r!(w=wi)$) ;
\draw[very thick,opacity=.3,postaction={decorate},blue] ($(w=U)!\r!(p1p1)$)  -- ($(0U)!\r!(z=0)$) ;
\draw[very thick,opacity=.3,postaction={decorate},blue] ($(p1p1)!\r!(0wi)$)  -- ($(z=0)!\r!(0wi)$) ;
\draw[very thick,opacity=.3,postaction={decorate},blue] ($(p1p1)!\r!(0wi)$)  -- ($(w=wi)!\r!(0wi)$) ;
}
\end{scope}
\end{tikzpicture}
\nn\ee
Again, one may verify by direct calculation that
\be [\dd,\hglob](\mu_{ii}) = \mu_{ii} - \pglob(\iglob(\mu_{ii})) \nn\ee
with $\pglob$ as we defined it in \cref{pglob} above.
\end{proof}

We continue with the proof of \cref{thm: he}. The next step is to construct a map of dg vector spaces
\be \ploc: \glocx \to \gloc \nn\ee
such that
\be \id_{\glocx} -\iglob\circ \pglob \simeq \iloc\circ\ploc, \label{iglocx}\ee
i.e. such that for any $\omega\in \glocx$, the difference
\be \wt\omega := \omega - \iglob\circ \pglob(\omega)\nn\ee 
is equal to $\iloc\circ\ploc(\omega)$ up to homotopy.

We define $\ploc$ by
\be \ploc(\omega)_i := \left( \omega_i - \iglob(\pglob(\omega))_i \right)^{++}|_{s=1} = \left( \wt\omega_i^{++}|_{s=1} \right)  \label{ploc}\ee
-- cf. \cref{disc P} and \cref{omegaidecomp}. 
Then, by construction, we have $\wt \omega_i^{--}=0$.
It remains to find a homotopy $\hlocx:\glocx\to \glocx$ contracting $\wt\omega_i^{+-}$, $\wt\omega_i^{-+}$, and the non-constant terms in $\wt\omega_i^{++}$. This can be done much as in the proof of \cref{prop: discx retract}. We may write $\wt\omega$ as
\be \wt\omega = \bigl( \wt\omega_i \bigr)_{i} = \bigl(\wt\phi_i(s) = \wt f_i(s) + \wt F_i(s)\dd s, \, \wt\psi_i(s) = \wt g_i(s) + \wt G_i(s) \dd s \bigr)_{i\in\{0,p,\8\}} \nn\ee
and set
\be \hlocx(\omega_i^{++})(s) = \left( \int_{1}^{s} \wt F_i^{++}(s') \dd s' \,,\,\, \int_{1}^{s} \wt G_i^{++}(s') \dd s' \right), \nn\ee
where we stress that we are defining $\hlocx(\omega)$ and it is the components of $\wt\omega$ that appear on the right. 
Using the fact that $\iglob\circ\pglob$ is a cochain map, and so commutes with $\dd$, we then have 
\begin{align} [\dd, \hlocx] (f_i^{++}(s),g_i^{++}(s)) &= (\hlocx \circ \dd)(f_i^{++}(s),g_i^{++}(s))\nn\\ 
&= \bigl(\wt f_i^{++}(s) - \wt f_i^{++}(1) , \wt g_i^{++}(s) - \wt g_i^{++}(1) \bigr) \nn\\
&=(\wt f_i^{++}(s) , \wt g_i^{++}(s)) - (1,1) \wt f_i^{++}(1)  \nn
\end{align}
and 
\begin{align} [\dd, \hlocx] (F_i^{++}(s)\dd s, G_i^{++}(s)\dd s) 
&= (\dd \circ \hlocx )(F_i^{++}(s)\dd s, G_i^{++}(s)\dd s)\nn\\  
&=  (\wt F_i^{++}(s)\dd s, \wt G_i^{++}(s)\dd s).\nn\end{align}
That is, 
\be [\dd, \hlocx] (\omega_i^{++}) = \wt \omega_i^{++} - \wt \omega_i^{++}|_{s=1} = (\id - \iglob\circ\pglob - \iloc\circ\ploc)(\omega_i^{++})  \nn\ee 
as we want. 

The argument for $\omega_i^{+-}$ and $\omega_i^{-+}$ is similar, again following the prototype in the proof of \cref{prop: discx retract}. (In fact, observe that $\omega_i^{+-} = \wt\omega_i^{+-}$ and  $\omega_i^{-+} = \wt\omega_i^{-+}$.)

This establishes that \cref{iglocx} holds, as we wanted. That is, $I\circ P$ is homotopic to the identity map $\id_{\glocx}$. 

To complete the proof of \cref{thm: he} it remains to check that $P\circ I$ is homotopic to the identity map $\id_{\gglob\oplus\gloc}$. We already defined the required homotopy for the global part $\pglob\circ\iglob$, in \cref{lem: hglob}. The restriction of $P\circ I$ to $\gloc$ is the identity map on the nose, i.e. we have
\be P\circ I|_{\gloc} = \ploc\circ \iloc = \id_{\gloc},\nn\ee
and it is manifest that  $\pglob\circ\iloc=0$ vanishes. However, the map $P\circ I$ does have a nonzero off-diagonal component
\be \ploc\circ\iglob : \gglob \to \gloc\nn\ee
which we must show is homotopic to zero.

\begin{lem} For each $i=1,\dots,N$, we have
\be \ploc(\iglob(\mu))_i = \iota_{w-w_i,z-z_i} \sum_{\substack{k,\ell=1\\ k\neq \ell \neq i \neq k}}^N  \mu_{k\ell}|_\Eta ,\nn\ee
where $\mu= \sum_{i,j=1}^N\mu_{ij}$ is the partial fraction decomposition of an element $\mu\in \gglob$ as in \cref{pfrac}.
\end{lem}
\begin{proof} First, note that the operation of restricting (i.e. pulling back) a polynomial differential form on the semisimplicial set $\Flag(\Rectn)$ to the vertex $\Eta$ factors through the operation of first restricting it to $\Flag(\DDpx{{w_i,z_i}})$ for any one of the punctured formal polydiscs $\DDpx{{w_i,z_i}}$. 

We have  $\iglob(\mu)^{--}_i = \mu_{ii}$. Therefore the restriction of the polynomial differential form $\pglob(\iglob(\mu))\in \gglob$  to the vertex $\Eta$ of $\Flag(\Rectn)$ is given by $\sum_{j=1}^N \mu_{jj}|_\Eta$, according to our definition \cref{pglob} of $\pglob$. Thus the restriction of $\iglob(\pglob(\iglob(\mu)))_i$ to the vertex $\Eta$ of $\Flag(\DDpx{{w_i,z_i}})$ is given by $\iota_{w-w_i,z-z_i} \sum_{j=1}^N \mu_{jj}|_\Eta$. 
By definition \cref{ploc} of $\ploc$, we find, for each $i=1,\dots,N$, that
\begin{align} \ploc(\iglob(\mu))_i &= \left( \iglob(\mu)_i - \iglob(\pglob(\iglob(\mu)))_i \right)^{++}|_{\Eta}  \nn\\
& = \iota_{w-w_i,z-z_i} \left(\sum_{\substack{k,\ell=1\\k,\ell\neq i}}^N \mu_{k\ell}|_\Eta -
  \sum_{\substack{j=1\\j\neq i}}^N \mu_{jj}|_\Eta\right) 
= \iota_{w-w_i,z-z_i} \sum_{\substack{k,\ell=1\\ k\neq \ell \neq i \neq k}}^N  \mu_{k\ell}|_\Eta\nn
\nn\end{align}
as required.
\end{proof}
We may then define
\be \hoffdiag : \gglob \to \gloc \nn\ee
to be the map given in terms of the partial fraction decomposition \cref{pfrac} of $\mu\in \gglob$ by
\begin{align} \hoffdiag(\mu)_i &:= \iota_{w-w_i,z-z_i} \sum_{\substack{k,\ell=1\\ k\neq \ell \neq i \neq k}}^N \hglob^{k\ell}(\mu_{k\ell})|_\Eta \nn\\
&= \iota_{w-w_i,z-z_i} \sum_{\substack{k,\ell=1\\ k\neq \ell \neq i \neq k}}^N \int_{(w_k,z_\ell)}^\Eta \mu_{k\ell}|_{\bigl( (w_k,z_\ell), \Eta\bigr)} 
\end{align}
(the integral is over the edge of $\Flag(\Rectn)$ joining $(w_k,z_\ell)$ to $\Eta$). 
Then indeed $\dd(\hoffdiag(\mu))_i = 0$ and 
\be \hoffdiag(  \dd (\mu))_i = \ploc(\iglob(\mu))_i - 0 = \ploc(\iglob(\mu))_i .\nn\ee
Thus we have shown that $\ploc \circ \iglob \simeq 0$, as we wanted. 

This completes the proof of \cref{thm: he}. 
\end{proof}

\subsection{The cohomology and the pairing}\label{sec: cohomology and pairing}
As in the proof of \cref{prop: pairing}, the pairing $\pairing{-}{-}$ restricts to a non-degenerate pairing 
\begin{align} H^0(\glocx) \ox  H^1(\glocx) &\to \down \CC\nn
\end{align}
between $H^0(\glocx) = \gloc$ and $H^1(\glocx)$, and these subspaces are again both manifestly isotropic. 
Therefore it is enough to establish the following. 
\begin{prop} 
There is a deformation retract of dg vector spaces
\be\begin{tikzcd} 
H^1(\glocx) 
\rar[shift left]{\iota}  & \lar[shift left]{\pi} \gglob \ar[loop right,"h"] 
\end{tikzcd}\nn\ee
(We stress that \emph{neither} $\pi$ nor $\iota$ here are maps of dg Lie algebras.) 
\end{prop}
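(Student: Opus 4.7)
My plan is to combine the homotopy equivalence of \cref{thm: he} with the explicit retracts of \cref{prop: disc retract} and \cref{prop: H1 retract}. First, I would extract the cohomological content: by \cref{thm: he} we have $\gglob \oplus \gloc \simeq \glocx$ as dg vector spaces, hence $H^\bul(\gglob) \oplus H^\bul(\gloc) \cong H^\bul(\glocx)$ as graded vector spaces. Since $\gloc$ is concentrated in degree $0$ with zero differential, $H^\bul(\gloc) = \gloc$; and summing \cref{cor: cohomology} over the marked points gives $H^0(\glocx) = \gloc$, $H^{\geq 2}(\glocx)=0$. Therefore $H^\bul(\gglob)$ is concentrated in degree $1$ and equals $H^1(\glocx)$. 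Since every dg vector space over $\CC$ has its cohomology as a deformation retract, this already proves that \emph{some} deformation retract of the asserted shape exists; the remaining task is merely to write one down explicitly.

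For the explicit maps, I would take $\iota := \pglob \circ \iota_1$, where $\iota_1 : H^1(\glocx) \to \glocx$ is obtained by applying the map $\down a \mapsto \tfrac12 a(\dd s,-\dd s)$ of \cref{prop: H1 retract} to each summand. Concretely, for $\down a = \bigoplus_i \down a_i$ with $a_i \in \g \ox (w-w_i)^{-1}(z-z_i)^{-1}\CC[(w-w_i)^{-1},(z-z_i)^{-1}]$, the form $\iota(\down a) \in \gglob$ is the 1-form on $\Flag(\Rectn)$ given by formula \cref{pglob} with $\phi_i^{--}(s) = \tfrac12 a_i \dd s$ and $\psi_i^{--}(s) = -\tfrac12 a_i \dd s$. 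Both $\iota_1$ and $\pglob$ are cochain maps, so $\iota$ is as well. For the projection, I would take $\pi := (p^{(N)}) \circ P_{\gm}^{(N)} \circ \iglob$, where $P_\gm^{(N)}$ is the $\gm$-component of the retract of \cref{prop: discx retract} applied summand by summand, and $p^{(N)}$ is the map $p$ of \cref{prop: H1 retract} applied summand by summand. Equivalently, in terms of the partial-fraction decomposition \cref{pfrac} $\mu = \sum_{i,j} \mu_{ij}$, one has $\pi(\mu) = \bigoplus_i \down \int_0^1 \bigl(F_{ii}(s')-G_{ii}(s')\bigr)\dd s'$, where $(f_{ii}(s)+F_{ii}(s)\dd s,\; g_{ii}(s)+G_{ii}(s)\dd s) := \iglob^i(\mu_{ii})$ along the two edges of $\Flag(\DDpx{{w_i,z_i}})$.

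The relation $\pi \circ \iota = \id_{H^1(\glocx)}$ then follows by a direct calculation: for $\iota(\down a)$, the only non-zero diagonal partial-fraction summand in $\iglob^i(\iota(\down a)_i)$ is the polar-at-$(w_i,z_i)$ piece, which by construction of $\pglob$ equals $\tfrac12 a_i(\dd s, -\dd s) \in \gm^i$, and then $p^{(N)}$ recovers $\down a$. The main step, and the point where the main obstacle lies, is to produce the homotopy $h : \gglob \to \gglob$ satisfying $\id_\gglob - \iota\circ\pi = [\dd, h]$. My strategy is to start from the homotopy $\hglob$ of \cref{lem: hglob}, which already contracts all off-diagonal summands $\mu_{k\ell}$ ($k\neq\ell$) to zero and each diagonal $\mu_{ii}$ onto $\pglob^i\circ\iglob^i(\mu_{ii})$; then compose with a second homotopy, patterned on the $h$ of \cref{prop: H1 retract} and transported to $\gglob$ via $\iota_1$ and $\pglob$, that further contracts each $\pglob^i\circ\iglob^i(\mu_{ii})$ onto $\iota(\pi(\mu))_i$. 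Adding the two homotopies and checking $[\dd,\cdot]$ distributes correctly gives the desired $h$.

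The bookkeeping obstacle to watch is that the second homotopy, built from the integrals in \cref{prop: H1 retract}, must be shown to land in $\gglob$, i.e.\ to respect the boundary conditions across \emph{all} simplices of $\Flag(\Rectn)$ (not merely those in a single $\Flag(\DDpx{{w_i,z_i}})$); this is where one uses that $\pglob\circ\iota_1$ takes values in $\gglob$ by construction, so that the resulting homotopy does too.
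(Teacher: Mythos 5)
Your proposal is correct and takes essentially the same route as the paper: the paper also factors the retract through $\bigoplus_{i=1}^N \gm^i$, taking $\iota = f\circ i$ and $\pi = p\circ g$ with $f^i = \pglob^i\circ I_{\gm}^i$ and $g^i(\mu)=\iglob^i(\mu_{ii})$, and obtains the homotopy as the standard composite $\hglob + f\circ h\circ g$ of the retract from \cref{prop: H1 retract} with the retract $(f,g,\hglob)$ built from \cref{lem: hglob}. Your opening abstract-existence remark and your unwinding by hand of ``deformation retracts compose'' (together with the harmless imprecision of writing $\iota_1$ where the inclusion $I_{\gm}^i$ is meant) are the only, cosmetic, differences.
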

\begin{proof}
Let $\gm^i:= \Th(\g\ox \ADpx{{w_i,z_i}}^{--})$ denote the copy of the dg Lie algebra $\gm$, cf. \cref{sec: def gp gm}, associated to the punctured formal disc at the marked point $(w_i,z_i)$. 
As in \cref{prop: H1 retract} we have the deformation retract of dg vector spaces
\be\begin{tikzcd} 
H^1(\gDpx{{w_i,z_i}}) 
\rar[shift left]{}  & \lar[shift left]{} \gm^i \ar[loop right,""] 
\end{tikzcd}\nn\ee
for each $i$. Deformation retracts compose. So to prove the result it is enough to show that there is a deformation retract of dg vector spaces
\be\begin{tikzcd} 
 \bigoplus_{i=1}^N \gm^i \rar[shift left]{f}  & \lar[shift left]{g} \gglob \ar[loop right,"\hglob"] 
\end{tikzcd}\nn\ee
And indeed, we have a dg Lie algebra map $I_{\gm}^i: \gm^i \to \gDpx{{w_i,z_i}}$ defined as in \cref{sec: def gp gm}, and the dg vector space map $\pglob^i: \gDpx{{w_i,z_i}} \to \gglob$ from \cref{pglob}. We let 
\be f:= \bigoplus_{i=1}^N f^i,\qquad f^i := \pglob^i \circ I_{\gm}^i :  \gm^i \to \gglob. \nn\ee
In the other direction, let
\be g = \bigoplus_{i=1}^N g^i,\qquad g^i: \mu \mapsto \iglob^i(\mu_{ii}) \nn\ee
where $\iglob^i$ was defined in \cref{iglobi} and $\mu_{ii}$ is as in \cref{pfrac}. On inspection, one sees that $g^i\circ f^i = \id_{\gm^i}$ for each $i$ (and that $g^i\circ f^j = 0$ for $i\neq j$).     
The homotopy $\hglob$ here is the same as in \cref{lem: hglob}. As there, it contracts all the off-diagonal pole terms $\mu_{ij}$ in the partial fraction decomposition of $\mu$, \be [\dd,\hglob](\mu_{ij}) =\mu_{ij},\qquad i\neq j\nn\ee
and retracts each diagonal term $\mu_{ii}$ to the semisimplicial subset $\Flag(\DDpx{{w_i,z_i}}$. By direct calculation one checks that
\be [\dd,\hglob](\mu_{ii}) = \mu_{ii} - f^i(g^i( \mu_{ii})) .\nn\ee
\end{proof}

\section{Triangular decompositions of enveloping algebras}\label{sec: triangular decompositions}
The main results of this section are \cref{cor: disc bimod} and \cref{cor: global bimod}. 
\subsection{Local case} 
In this section we establish the following corollary of \cref{prop: discx retract}.
\begin{cor}\label{cor: disc bimod}
There is a deformation retract of $(U(\gm),U(\gp))$-bimodules
\be 
\begin{tikzcd} 
U(\gm) \ox U(\gp) \rar[shift left]{U(I)}  & \lar[shift left]{U(P)} U(\gDx) \ar[loop right,"\wt h"] 
\end{tikzcd}\nn\ee
\end{cor}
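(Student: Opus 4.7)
The map $U(I): U(\gm) \ox U(\gp) \to U(\gDx)$ defined by $x \ox y \mapsto U(\Im)(x)\,U(\Ip)(y)$ is tautologically a map of $(U(\gm),U(\gp))$-bimodules, since $U(\Im)$ and $U(\Ip)$ are dg algebra maps and the left (resp.\ right) action on the source acts by multiplication on the leftmost (resp.\ rightmost) tensor factor. The task is to construct the bimodule retract $U(P)$ and the bimodule homotopy $\wt h$.

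My plan is to work via the standard ascending exhaustive PBW filtrations on both sides. The map $U(I)$ is filtration-preserving, and by the PBW theorem for dg Lie algebras in characteristic zero the associated graded of $U(I)$ is the symmetric-algebra map
\[
\Sym(I): \Sym(\gm \oplus \gp) = \Sym(\gm) \ox \Sym(\gp) \longrightarrow \Sym(\gDx).
\]
The symmetric-algebra functor preserves deformation retracts of dg vector spaces: starting from the data $(I,P,h)$ of \cref{prop: discx retract} one obtains an explicit deformation retract $\Sym(\gm \oplus \gp) \rightleftarrows \Sym(\gDx)$, where the homotopy on $\Sym^n(\gDx)$ is given (upon symmetrization over $S_n$) by the telescope formula
\[
\sum_{k=0}^{n-1} (I\circ P)^{\ox k} \ox h \ox \id^{\ox(n-1-k)}.
\]
This $\Sym$-level retract is manifestly one of $(\Sym(\gm),\Sym(\gp))$-bimodules, because the telescope contracts in a single $\gDx$-factor while the bimodule actions act by multiplication on fresh factors.

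I would then lift this deformation retract from the associated gradeds back to the filtered objects by induction on the PBW degree. Given $X \in U(\gDx)$ of PBW degree $\leq n$, first straighten its top-degree component using the $\Sym$-level retract (via the symmetrization isomorphism $\sigma: \Sym \xrightarrow{\sim} U$, valid in characteristic zero); the difference $X - U(I)(\text{straightened})$ then lies in PBW degree $<n$, to which one applies the inductive hypothesis. The cumulative corrections assemble into well-defined maps $U(P): U(\gDx) \to U(\gm)\ox U(\gp)$ and $\wt h: U(\gDx) \to U(\gDx)$ satisfying $U(P)\circ U(I) = \id$ and $\id - U(I)\circ U(P) = [\dd, \wt h]$.

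The main obstacle is verifying bimodule equivariance of the lift, since $\sigma$ is only a linear isomorphism, not a bimodule one. The key point is that left multiplication by $U(\gm)$ and right multiplication by $U(\gp)$ on $U(\gDx)$ preserve the PBW filtration and induce the corresponding $\Sym(\gm)$- and $\Sym(\gp)$-actions on associated graded; the bimodule equivariance of the $\Sym$-level retract then propagates through each inductive step, allowing one to choose $U(P)$ and $\wt h$ compatibly with both the left $U(\gm)$- and right $U(\gp)$-actions.
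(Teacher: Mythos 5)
There is a genuine gap, and it sits exactly at the point you flag as the ``main obstacle''. Note first that as dg vector spaces the statement is nearly vacuous: symmetrization $\Sym(\gDx)\xrightarrow{\sim}U(\gDx)$ is a chain isomorphism, so the whole content of the corollary is the $(U(\gm),U(\gp))$-bimodule structure of the retract. Your strategy is to build a bimodule retract on the associated graded of the PBW filtration and then lift it back by induction on PBW degree using the symmetrization map $\sigma$. But $\sigma$ is not a bimodule map, the inductive ``straightening'' corrections are made through $\sigma$, and you give no argument that the cumulative corrections can be chosen equivariantly for both the left $U(\gm)$- and right $U(\gp)$-actions; you only assert that equivariance ``propagates''. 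Lifting a bimodule deformation retract from $\gr U(\gDx)$ (where the actions are commutative) to the filtered, noncommutative $U(\gDx)$ is precisely the nontrivial step, and as written it is not established. A secondary unstated ingredient: even your claim that the symmetrized telescope on $\Sym(\gDx)$ is ``manifestly'' a bimodule homotopy uses the side conditions $h\circ I=0$ and $(I\circ P)\circ I=I$ (so that terms where $h$ or $I\circ P$ hit a factor coming from $\gm$ or $\gp$ behave correctly); you never record that the homotopy of \cref{prop: discx retract} vanishes on the images of $\gm$ and $\gp$ and preserves the decomposition, which is the key special feature being exploited.

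The gap is repaired by replacing the filtration induction with the bimodule form of PBW, which is the route the paper takes. Since $P\circ I=\id$, the map $I\circ P$ is a projector, giving an honest splitting of cochain complexes $\gDx\cong\gm\oplus\g^\perp\oplus\gp$ with $\gm,\gp$ dg Lie subalgebras and $\g^\perp:=(\id-I\circ P)(\gDx)$. The dg analogue of Dixmier's triangular decomposition then yields an isomorphism
\be U(\gDx)\;\cong\;U(\gm)\ox\Sym(\g^\perp)\ox U(\gp)\nn\ee
of $(U(\gm),U(\gp))$-bimodules, so the bimodule structure is transparent and the homotopy only needs to be constructed on the middle factor. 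There, because $h$ satisfies the side conditions, it restricts to a map $\g^\perp\to\g^\perp$ with $[\dd,h]=\id$, its derivation extension to $\Sym(\g^\perp)$ satisfies $[\dd,h]|_{\Sym^n}=n\,\id$, and $\wt h:=\id\ox\bigl(\sum_{n\geq1}\tfrac1n\,h|_{\Sym^n\g^\perp}\bigr)\ox\id$ is visibly a bimodule map with $[\dd,\wt h]=\id-U(I)\circ U(P)$. Your Sym-level telescope is essentially equivalent to this rescaled derivation extension, so the missing piece is not the homotopy formula but the bimodule PBW isomorphism that replaces your unproven equivariant lift.
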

\begin{proof}
Consider the deformation retract
\be\begin{tikzcd} 
\gm \oplus \gp \rar[shift left]{I}  & \lar[shift left]{P} \gDx \ar[loop right,"h"] 
\end{tikzcd}\nn\ee
of \cref{prop: discx retract}. The cochain map
$I\circ P: \gDx \to \gDx$ 
is a projector (because 
$(I \circ P) \circ (I \circ P) = I \circ ( P\circ I) \circ I = I \circ \id_{\gm \oplus \gp} \circ P = I \circ P$).
Its image is a dg subspace which we can and shall regard as an embedded copy of the dg vector space  $\gm \oplus \gp$. We get also the projector $\id_{\gDx} - I \circ P$ onto a dg subspace 
\be \g^\perp := (\id_{\gDx} - I \circ P)(\gDx),\nn\ee
and we obtain the direct sum decomposition of cochain complexes,
\be \gDx = \gm \oplus \g^\perp \oplus \gp \label{gdecom}\ee
To be concrete,  recall the decomposition \cref{omegadecomp} of an element $\omega \in \gDx$. The decomposition above is
\be \omega = \left(\omega^{--}, \omega^{+-} + \omega^{-+} + \omega^{++} - \omega^{++}|_{s=1}, \omega^{++}|_{s=1} \right) \nn\ee

We are therefore in the setting of the following lemma, which is essentially the dg analog of Proposition 2.2.7 and (a special case of) Proposition 2.2.9 of \cite{Dixmier_1996}.
\begin{lem}\label{lem: pbw}
Suppose $\a^- \into \a$ and $\a^+ \into \a$ are embeddings of dg Lie algebras, and $\a^\perp \into \a$ an embedding of dg vector spaces, such that 
\be \a \cong \a^- \oplus \a^\perp \oplus \a^+\nn\ee
as dg vector spaces. 
Then
\be U(\a) \cong U(\a^-) \ox \Sym(\a^\perp) \ox U(\a^+)\nn\ee
as dg vector spaces and, moreover, as $(U(\a^-), U(\a^+))$-bimodules. 
\qed\end{lem}
In view of this lemma, we have the isomorphism of  $(U(\gm), U(\gp))$-bimodules
\be U(\gDx) \cong U(\gm) \ox \Sym(\g^\perp) \ox U(\gp).\nn\ee
To prove \cref{cor: disc bimod} it remains to show that our homotopy $h$ from \cref{prop: discx retract} gives rise to a retract of $(U(\gm), U(\gp))$-bimodules
\be\begin{tikzcd} 
U(\gm) \ox U(\gp) \rar[shift left]{U(I)}  & \lar[shift left]{U(P)} 
U(\gm) \ox \Sym(\g^\perp) \ox U(\gp)  \arrow[loop right, distance=4em, start anchor={[yshift=3ex]east}, end anchor={[yshift=-3ex]east},"\wt h"]
\end{tikzcd}\nn\ee
To do that, we adapt an argument taken from the proof of \cite[Proposition 2.5.5]{GwilliamThesis}. The natural embedding and projection maps of dg vector spaces
\be \begin{tikzcd}  \CC \cong \Sym^0(\g^\perp) \rar[hook,shift left]{}  & \lar[shift left,two heads]{} \Sym(\g^\perp) 
\end{tikzcd} 
\nn\ee
give rise to maps $U(I)$ and $U(P)$ of free $(U(\gm),U(\gp)$-bimodules
\be\begin{tikzcd} 
U(\gm) \ox U(\gp)  \rar[shift left]{U(I)}  & \lar[shift left]{U(P)} 
U(\gm) \ox \Sym(\g^\perp) \ox U(\gp). 
\end{tikzcd}\nn\ee
Now, our homotopy $h: \gDx \to \gDx$ from \cref{prop: discx retract} by construction preserves the decomposition \cref{gdecom}, and is zero on the summands $\gp$ and $\gm$. So it defines a map $\g^\perp \to \g^\perp$ which, abusively, we continue to call $h$. 
(In the language of \cite{GwilliamThesis}, our homotopy obeys the \dfn{side conditions} and our retract is thus a \dfn{strong deformation retract} or \dfn{contraction}.)
As maps $\g^\perp \to \g^\perp$, we have
\be [\dd, h] = \id .\nn\ee
Recall that any map of dg vector spaces $V \to V$ extends uniquely to a derivation of the free dg commutative algebra $\Sym(V)$. In particular, we may extend $h$ and $\id$ to derivations of $\Sym(\g^\perp)$. But the extension of $\id$ to a derivation is just the map which multiplies each element of the dg subspace $\Sym^n(\g^\perp)$ by a factor of $n$, for each $n\geq 0$. So we obtain that
\be [\dd, h]|_{\Sym^n(\g^\perp)} = n \,\id_{\Sym^n(\g^\perp)} \nn\ee
for each $n\geq 0$.
We may therefore define the required homotopy 
\be \wt h : U(\gm) \ox \Sym(\g^\perp) \ox U(\gp) \to U(\gm) \ox \Sym(\g^\perp) \ox U(\gp) \nn\ee 
to be 
\be \wt h := \id \ox \left(\sum_{n\geq 1} \frac 1 n h|_{\Sym^n \g^\perp} \right)\ox \id \nn\ee
for then $[\dd, \wt h]$ is indeed the identity on each subspace $U( \gm ) \ox \Sym^n(\g^\perp) \ox U(  \gp )$ with $n\geq 1$, and is by definition zero on $U( \gm ) \ox \Sym^0(\g^\perp) \ox U(  \gp )$. 
Thus finally we get that
\be [\dd, \wt h ] = \id - U(I) \circ U(P), \nn\ee
as required.
It is evident that $\wt h$ is a map of $(U(\gm), U(\gp))$-bimodules.
This completes the proof of \cref{cor: disc bimod}. 
\end{proof}

Next, we would like to do something similiar in the global case, while staying in the world of dg algebras. For that purpose, we would really wish to have a deformation retract, rather than merely a homotopy equivalence as we have in \cref{thm: he}.

We shall obtain a result in this direction in \cref{thm: global retract} below. There are two steps: 

First, we shall introduce, on both sides of the map $I$, additional summands associated to all unpunctured discs $\DDp{{w_i,z_j}}$ at the ``off-diagonal'' points $(w_i,z_j)$, $i\neq j$. Indeed, arguing as for \cref{thm: he} it is not hard to see that there is also a homotopy equivalence
\be\begin{tikzcd}[column sep = huge,every cell/.style={font=\everymath\expandafter{\the\everymath\displaystyle}}]
\arrow[loop left, distance=4em, start anchor={[yshift=-3ex]west}, end anchor={[yshift=3ex]west},"\hglob"] 
\gN \oplus \bigoplus_{i,j=1}^N \gp(w_i,z_j) \quad\rar[shift left]{I}  & \lar[shift left]{J}\quad \bigoplus_{i=1}^N \gDpx{{w_i,z_i}} \oplus \bigoplus_{\substack{i\neq j}} \gDp{{w_i,z_j}} 
\arrow[loop right, distance=4em, start anchor={[yshift=3ex]east}, end anchor={[yshift=-3ex]east},"h"] 
\end{tikzcd}\nn\ee
(Note that by paying the price of introducing these extra points, we get rid of the off-diagonal terms in the homotopy on the left.)
 
Second, we introduce new models for all the summands on the right (at both the punctured and unpunctured discs). These models are chosen to be sufficiently large that we can reconstruct an element of $\gN$ on the nose, rather than merely up to homotopy, from its image under $I$.

We turn to these enlarged models now.

\subsection{Big models for the local algebras}
Now we model the disc algebras as $\Flag_\bul(\Rectn)$-objects.

Pick any $(i,j) \in \{1,\dots,N\}^2$. We set $\ARN^{ij}$ to be the $\Flag_\bul(\Rectn)$-object in commutative algebras given as follows. We assign, to every flag/simplex of $\Flag_\bul(\Rectn)$, the commutative algebra
\be \CC((w-w_i))\oxC\CC((z-z_j)) ,\nn\ee
with only the following exceptions:
\be \ARN^{ij}( \{\pt\} \subset \{ w=w_i\} ) :=\ARN^{ij}( \{ w=w_i\} ) := \CC[[w-w_i]]\oxC\CC((z-z_j)) \nn\ee
\be \ARN^{ij}( \{\pt\} \subset \{ z=z_j\} ) :=\ARN^{ij}( \{ z=z_j\} ) := \CC((w-w_i))\oxC\CC[[z-z_j]] \nn\ee
and (when $i\neq j$)
\be \ARN^{ij}( \{(w_i,z_j)\} ) := \CC[[w-w_i]]\oxC\CC[[z-z_j]] .\nn\ee
Let $\gN^{\ij}$ denote the dg Lie algebra
\be \gN^{ij} := \Th(\g \ox \ARN^{ij}) \nn\ee
Thus, $\gN^{ij}$ is the dg algebra of polynomial differential forms on $\Flag_\bul(\Rectn)$, valued in $ \g\ox\CC((w-w_i))\oxC\CC((z-z_j))$, and subject to boundary conditions above -- which now, in contrast to $\gN$ in \cref{sec: gNdef}), are only on the boundary simplices corresponding to the flags intersecting the point $(w_i,z_j)$.  

The next lemma says that $\gN^{ij}$ provides another model for $\gDp{{w_i,z_j}}$ (for $i\neq j$) and that $\gN^{ii}$ provides another model for $\gDpx{{w_i,z_i}}$. 

Let $\gp^{ij} := \g\ox \CC[[w-w_i]]\oxC\CC[[z-z_j]]$. Recall $\gp^{\ij} \simeq \gDp{{w_i,z_j}}$, \cref{prop: disc retract}.

\begin{lem}\label{lem: big disc retracts}
There are deformation retracts of dg vector spaces, 
\be\begin{tikzcd} 
\gp^{ij} \rar[shift left]{g_{ij}}  & \lar[shift left]{f_{ij}} \gN^{ij} \ar[loop right,"h_{ij}"] 
\end{tikzcd}\nn\ee 
for $i\neq j$, and
\be\begin{tikzcd} 
\gDpx{{w_i,z_i}} \rar[shift left]{g_{ii}}  & \lar[shift left]{f_{ii}} \gN^{ii} \ar[loop right,"h_{ii}"] 
\end{tikzcd}\nn\ee 
where the maps $g_{ij}, f_{ij}, g_{ii}, f_{ii}$ are maps of dg Lie algebras.
\end{lem}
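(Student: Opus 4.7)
My plan is to adapt directly the arguments of \cref{prop: discx retract}, \cref{prop: disc retract}, and especially \cref{lem: hglob}: in each case I would construct the maps $g_{ij},f_{ij}$ and the homotopy $h_{ij}$ by working simplex-by-simplex on $\Flag_\bul(\Rectn)$, decomposing an element of the Thom-Whitney complex by partial fractions in the local coordinates $(w-w_i),(z-z_j)$, and contracting each component away from the locus on which $\ARN^{ij}$ imposes boundary conditions.

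For the case $i\neq j$, the point $(w_i,z_j)$ is a vertex of $\Flag_\bul(\Rectn)$, and the boundary condition $\ARN^{ij}(\{(w_i,z_j)\})=\CC[[w-w_i]]\oxC\CC[[z-z_j]]$ forces the value of any element of $\gN^{ij}$ at that vertex to lie in $\gp^{ij}$. So I would take $f_{ij}$ to be pullback to this vertex, and $g_{ij}$ to be the inclusion sending $\mu\in\gp^{ij}$ to the constant zero-form on $\Flag_\bul(\Rectn)$ with value $\mu$ (which trivially satisfies every boundary condition of $\ARN^{ij}$). Both are manifestly dg Lie algebra maps, and $f_{ij}\circ g_{ij}=\id_{\gp^{ij}}$ on the nose. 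For the homotopy $h_{ij}$, I would decompose $\mu\in\gN^{ij}$ via the fourfold partial-fraction decomposition of $\CC((w-w_i))\oxC\CC((z-z_j))$ and, for each summand $\mu^{\pm\pm}$, construct an integral operator retracting to the vertex $(w_i,z_j)$, in the spirit of \cref{lem: hglob} and using the pictorial conventions of \cref{sec: pictures}. The $(+-)$, $(-+)$ and $(--)$ pieces must vanish at $(w_i,z_j)$ by the boundary condition and thus contract to zero, while the $(++)$ piece contracts to its constant value at $(w_i,z_j)$, producing $g_{ij}\circ f_{ij}(\mu)$.

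For the case $i=j$, there is no boundary condition at a single vertex; instead, the boundary conditions of $\ARN^{ii}$ are imposed along the subcomplex $\Flag_\bul(\DDpx{{w_i,z_i}})\subset\Flag_\bul(\Rectn)$, and the restriction of $\ARN^{ii}$ to this subcomplex agrees with $\ADpx{{w_i,z_i}}$. Accordingly I would define $f_{ii}:\gN^{ii}\to\gDpx{{w_i,z_i}}$ as pullback to this subcomplex; by \cref{lem: sca2} and functoriality of $\Th$ it is a dg Lie algebra map. To construct $g_{ii}$, given $\omega=(\phi(s),\psi(s))\in\gDpx{{w_i,z_i}}$, I would declare $g_{ii}(\omega)$ on each $2$-simplex $f=(\pt\subset(\mathrm{line})\subset\Eta)$ of $\Flag_\bul(\Rectn)$, in the coordinates $(s,t)$ of \cref{sec: simplex coords}, by the pointwise rule
\be
g_{ii}(\omega)\big|_f:=\begin{cases}
\phi(s) & \text{if $(\mathrm{line})=(w{=}w_i)$,}\\
\psi(s) & \text{if $(\mathrm{line})=(z{=}z_i)$,}\\
\phi(t) & \text{if $(\mathrm{line})\neq(w{=}w_i),(z{=}z_i)$ and $\pt\in(w{=}w_i)$,}\\
\psi(t) & \text{if $(\mathrm{line})\neq(w{=}w_i),(z{=}z_i)$ and $\pt\in(z{=}z_i)$,}\\
\omega_\Eta & \text{otherwise.}
\end{cases}
\nn\ee
A direct inspection of vertex and edge pullbacks---the only nontrivial checks being consistency at the vertices $(w_i,z_\ell)$ and $(w_k,z_i)$, which lie on multiple $2$-simplices---shows that this assembles into a well-defined polynomial form on $\Flag_\bul(\Rectn)$ satisfying all the boundary conditions of $\ARN^{ii}$. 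Since the rule is linear and pointwise in $\omega$, it preserves both the de Rham differential and the Lie bracket; thus $g_{ii}$ is a dg Lie algebra map, and $f_{ii}\circ g_{ii}=\id$ by construction.

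The remaining and most delicate step is to build the contracting homotopy $h_{ii}$ with $\id_{\gN^{ii}}-g_{ii}\circ f_{ii}=[\dd,h_{ii}]$. This I would do by combining the methods of \cref{prop: discx retract} and \cref{lem: hglob}: after partial-fraction-decomposing an element of $\gN^{ii}$ with respect to $(w-w_i),(z-z_i)$, one builds, for each summand and each $2$-simplex of $\Flag_\bul(\Rectn)$, an integral operator retracting that simplex onto its face in $\Flag_\bul(\DDpx{{w_i,z_i}})$, the paths of integration being chosen compatibly across adjacent simplices (as in \cref{sec: pictures}) so that after contraction only the component captured by $g_{ii}\circ f_{ii}$ survives. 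The main obstacle will be verifying that these choices can be made consistently across all simplices and all partial-fraction summands simultaneously; but this proceeds \emph{mutatis mutandis} as in the proof of \cref{lem: hglob}, and I would omit the explicit integrals.
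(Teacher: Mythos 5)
Your proposal is correct, and for the $i=i$ case it is essentially the paper's own argument: the same $f_{ii}$ (restriction to $\Flag_\bul(\DDpx{{w_i,z_i}})$, a dg Lie map by \cref{lem: sca2} and functoriality of $\Th$), the same $g_{ii}$ (your case formula reproduces exactly the paper's pictorial definition, with "$\omega_\Eta$" agreeing with the constants $\phi(1)=\psi(1)$ on the far simplices), and the same retraction of $\Flag_\bul(\Rectn)$ onto the local subcomplex for $h_{ii}$. Two small imprecisions there: the simplices at points $(w_k,z_\ell)$ with $k,\ell\neq i$ have no \emph{face} in $\Flag_\bul(\DDpx{{w_i,z_i}})$, only the vertex $\Eta$, so on those the retraction runs along the diagonal edge $\bigl((w_k,z_\ell),\Eta\bigr)$ rather than onto a face; and the partial-fraction decomposition is not needed for $h_{ii}$ (a single flow whose lines sweep the boundary-condition edges from within themselves preserves the conditions for the whole form at once), though treating summands separately is harmless.

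For $i\neq j$ you take a genuinely different, and somewhat more direct, route: you retract $\gN^{ij}$ in one step onto the vertex $(w_i,z_j)$, with $f_{ij}$ evaluation there and $g_{ij}$ the constant extension, whereas the paper factors through $\gDp{{w_i,z_j}}$ (a retraction onto the local two-simplex square, built like the $i=i$ case, composed with \cref{prop: disc retract}). Your composite maps coincide with the paper's if one uses the first choice of $P$ in \cref{prop: disc retract}, and your homotopy is essentially the map $\hglob^{k\ell}$ of \cref{lem: hglob} recentred at $(w_i,z_j)$, now applied to Laurent-valued forms. The one verification you should make explicit is that $h_{ij}$ preserves the boundary conditions not only at the vertex $(w_i,z_j)$ but along \emph{all} the edges $\bigl((w_i,z_\ell),(w=w_i)\bigr)$ and $\bigl((w_k,z_j),(z=z_j)\bigr)$ and at the vertices $(w=w_i)$, $(z=z_j)$: this holds because those edges are unions of flow lines through the basepoint (the flow line $(w_i,z_j)\to(w=w_i)\to(w_i,z_\ell)$, and its analogue for $z$), so on them $h_{ij}$ only integrates the pullback of the form to the corresponding fan, where the relevant polar components vanish. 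With that noted, your argument is sound; what the paper's factorization buys is that this check is inherited from the already-established \cref{prop: disc retract}, while your one-step version avoids the intermediate model $\gDp{{w_i,z_j}}$ altogether.
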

\begin{proof}
We consider first $\gN^{ii}$. 
Since $\Flag_\bul(\DDpx {{w_i,z_i}})$ is a semisimplicial subset of $\Flag_\bul(\Rectn)$, we get the restriction of $\ARN$ to a $\Flag_\bul(\DDpx {{w_i,z_i}})$-algebra, and we recognise the latter as a copy of  $\ADpx{{w_i,z_i}}$.
Hence, by \cref{lem: sca1} and \cref{lem: sca2} we get the map of semicosimplicial algebras
$\sca \ARN  \to \sca \ADpx {{w_i,z_i}}$
and therefore, by the functoriality of $\Th$, a map of dg Lie algebras 
\be f_{ii} : \gN \to \gDpx {{w_i,z_i}}. \nn\ee
Now we define the map $g_{ii}$. Consider an element 
\be \omega(s) = (\phi(s),\psi(s)) \in \gDpx {{w_i,z_i}}\nn\ee 
cf. \cref{gdxelements}. 


Let $g_{ii}(\omega)\in \gN^{ii}$ be the polynomial differential form on $\Flag_\bul(\Rectn)$ given as follows:
\be g_{ii}(\omega) := 
\begin{tikzpicture}[baseline=0]
\tdplotsetmaincoords{0}{180}
\begin{scope}[xshift=-5cm,scale=2,tdplot_main_coords,local bounding box=G]
\threefourpicikl
\node (N{0wi }{w=wi}{p1p1}) at ({0wi }{w=wi}{p1p1}) {$\psi(1)$}; 
\node (N{0wi }{z=0 }{p1p1}) at ({0wi }{z=0 }{p1p1}) {$\phi(1)$}; 
\node (N{0U  }{z=0 }{p1p1}) at ({0U  }{z=0 }{p1p1}) {$\phi(t)$};
\node (N{0U  }{w=U }{p1p1}) at ({0U  }{w=U }{p1p1}) {$\phi(s)$};
\node (N{uwi }{z=u }{p1p1}) at ({uwi }{z=u }{p1p1}) {$\psi(s)$}; 
\node (N{uwi }{w=wi}{p1p1}) at ({uwi }{w=wi}{p1p1}) {$\psi(t)$}; 
\end{scope}
\end{tikzpicture}
\nn\ee
(If $k=\ell$ then the lower right square is absent.) 

First, observe that this does define a map of dg Lie algebras $\gDpx{{w_i,z_i}} \to \gN^{ii}$. And it is evident that 
\be f_{ii}\circ g_{ii} (\omega) = \omega.\nn\ee
It remains to show that, for all $\mu \in \gN^{ii}$,
\be g_{ii}\circ f_{ii} (\mu) = \mu + [h_{ii}, \dd] (\mu) \nn\ee
for some suitable homotopy $h_{ii}: \gN^{ii} \to \gN^{ii}$. And indeed, by direct calculation, one checks that a suitable homotopy is given as follows, in the pictorial notation we introduced in \cref{sec: pictures}:
\be h_{ii}(\mu) := 
\begin{tikzpicture}[baseline=0]
\tdplotsetmaincoords{0}{180}
\begin{scope}[xshift=-5cm,scale=1.301,tdplot_main_coords,local bounding box=G,decoration={
    markings,
    mark=at position 0.75 with {\arrow{stealth}},
}]
\threefourpicikl
\draw[ultra thick,dashed] (z=u) -- (uwi);
\draw[ultra thick,dashed] (w=U) -- (0U);
\draw[very thick,opacity=.3,postaction={decorate},blue] (p1p1) -- (0wi);
\foreach \r in {0.25,0.5,0.75}
{
\draw[very thick,opacity=.3,postaction={decorate},blue] ($(z=u)!\r!(p1p1)$)  -- ($(uwi)!\r!(w=wi)$) ;
\draw[very thick,opacity=.3,postaction={decorate},blue] ($(w=U)!\r!(p1p1)$)  -- ($(0U)!\r!(z=0)$) ;
\draw[very thick,opacity=.3,postaction={decorate},blue] ($(p1p1)!\r!(0wi)$)  -- ($(z=0)!\r!(0wi)$) ;
\draw[very thick,opacity=.3,postaction={decorate},blue] ($(p1p1)!\r!(0wi)$)  -- ($(w=wi)!\r!(0wi)$) ;
}
\end{scope}
\end{tikzpicture}
\nn\ee
(Here we have highlighted the edges on which the boundary conditions are non-trivial.)

Now we turn to $\gN^{ij}$ for $i\neq j$. Deformation retracts compose, so in view of \cref{prop: disc retract}, it is enough to show that there is a deformation retract of dg Lie algebras
\be\begin{tikzcd} 
\gDp{{w_i,z_j}} \rar[shift left]{g_{ij}}  & \lar[shift left]{f_{ij}} \gN^{ij} \ar[loop right,"h_{ij}"] 
\end{tikzcd}\nn\ee 
This can be done very similarly to the previous case of $\gN^{ii}$, with the following modification to the definition of the map $g$:
\be g_{ij}\left(
\begin{tikzpicture}[baseline=25]
\tdplotsetmaincoords{0}{180}
\begin{scope}[xshift=-5cm,scale=2,tdplot_main_coords,local bounding box=G]
\squarecoords
\draw[fill=blue,fill opacity=.1] (p1p1) -- (w=U) -- (uU)   --cycle; 
\draw[fill=blue,fill opacity=.1] (p1p1) -- (z=u) -- (uU)  --cycle; 
\begin{scope}[every node/.style={font=\scriptsize}]
\node[inner sep=1.5,label= left:{$(w_i,z_j)$}]   (NuU)   at (uU)  {};
\node[inner sep=1.5,label= left:{$(w=w_i)$}]   (Nw=U)  at (w=U) {};
\node[inner sep=1.5,label= above:{$(z=z_j)$}]   (Nz=u)  at (z=u) {};
\node[inner sep=1.5,label= below:{$\Eta$}] (Np1p1) at (p1p1) {};
\end{scope}  
\node (N{uU  }{w=U }{p1p1}) at ({uU  }{w=U }{p1p1}) {$\phi(s,t)$}; 
\node (N{uU  }{z=u }{p1p1}) at ({uU  }{z=u }{p1p1}) {$\psi(s,t)$}; 
\end{scope}
\end{tikzpicture}
\right) := 
\begin{tikzpicture}[baseline=0]
\tdplotsetmaincoords{0}{180}
\begin{scope}[xshift=-5cm,scale=2,tdplot_main_coords,local bounding box=G]
\fourfourpicijkl
\node (N{uU  }{w=U }{p1p1}) at ({uU  }{w=U }{p1p1}) {$\phi(s,t)$}; 
\node (N{uU  }{z=u }{p1p1}) at ({uU  }{z=u }{p1p1}) {$\psi(s,t)$}; 
\node (N{0wi }{w=wi}{p1p1}) at ({0wi }{w=wi}{p1p1}) {$\psi(1,1)$}; 
\node (N{0wi }{z=0 }{p1p1}) at ({0wi }{z=0 }{p1p1}) {$\phi(1,1)$}; 
\node (N{0U  }{z=0 }{p1p1}) at ({0U  }{z=0 }{p1p1}) {$\phi(t,1)$};
\node (N{0U  }{w=U }{p1p1}) at ({0U  }{w=U }{p1p1}) {$\phi(s,1)$};
\node (N{uwi }{z=u }{p1p1}) at ({uwi }{z=u }{p1p1}) {$\psi(s,1)$}; 
\node (N{uwi }{w=wi}{p1p1}) at ({uwi }{w=wi}{p1p1}) {$\psi(t,1)$}; 
\end{scope}
\end{tikzpicture}
\nn\ee
(where, again, the lower right square is absent if $k=\ell$.)
\end{proof}

\subsection{Global deformation retract}
Let 
\be \galoc := \bigoplus_{i,j=1}^N \gp^{ij} \qquad\text{and}\qquad 
    \galocx := \bigoplus_{i,j=1}^N \gN^{ij} \nn\ee
We have the dg Lie algebra maps
\be \iloc : \galoc \to \galocx \qquad\text{and}\qquad 
   \iglob : \gN \to \galocx .\nn\ee
\begin{thm}\label{thm: global retract}
There is a deformation retract of dg vector spaces
\be\begin{tikzcd}[column sep = huge] 
\gN \oplus \galoc\quad \rar[shift left]{I=(\iglob,\iloc)}  & \lar[shift left]{P=\pglob\oplus \ploc} \quad\galocx \ar[loop right,"h"] 
\end{tikzcd}\nn\ee
\end{thm}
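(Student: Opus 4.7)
The plan is to explicitly construct a projection $P = \pglob \oplus \ploc : \galocx \to \gN \oplus \galoc$ and a homotopy $h$ on $\galocx$ satisfying $P\circ I = \id$ on the nose and $\id - I \circ P = [\dd, h]$. For $\omega = (\omega_{ij}) \in \galocx = \bigoplus_{i,j=1}^N \gN^{ij}$, decompose each $\omega_{ij}$ into its polar/regular parts $\omega_{ij} = \omega_{ij}^{++} + \omega_{ij}^{+-} + \omega_{ij}^{-+} + \omega_{ij}^{--}$ with respect to the local coordinates $(w-w_i, z-z_j)$, as in \cref{omegadecomp}. I define $\pglob_{ij}(\omega_{ij}) := \omega_{ij}^{--}$, regarded as a form on $\Flag_\bul(\Rectn)$ valued in the subspace $(w-w_i)^{-1}(z-z_j)^{-1}\CC[(w-w_i)^{-1}, (z-z_j)^{-1}]$ embedded naturally into $\CC(w)_{\ww}^{\8} \oxC \CC(z)_{\zz}^{\8}$, and set $\pglob(\omega) := \sum_{i,j} \pglob_{ij}(\omega_{ij}) \in \gN$. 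The boundary conditions of $\gN^{ij}$ force $\omega_{ij}^{--}$ to vanish on all flags intersecting $(w_i,z_j)$, which matches the corresponding vanishing required by the boundary conditions of $\gN$, so $\pglob$ is well-defined. Then define
$$\ploc_{ij}(\omega) := \omega_{ij}^{++}\big|_\Eta \, - \, \bigl(\iota_{w-w_i,\, z-z_j}\bigl(\pglob(\omega)\big|_\Eta\bigr)\bigr)^{++} \,\in\, \gp^{ij},$$
where the correction takes the global element $\pglob(\omega) \in \gN$, evaluates at the generic vertex $\Eta$, formally expands around $(w_i,z_j)$ coordinate-wise, and keeps the regular-regular part.

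Next I would verify $P\circ I = \id$ by four short computations. Using the partial fraction decomposition $\mu = \sum_{ij} \mu_{ij}$ of an element $\mu \in \gN$, one has $\iglob(\mu)_{ij}^{--} = \mu_{ij}$, hence $\pglob\circ\iglob = \id_{\gN}$; and $\pglob\circ\iloc = 0$ because constant forms have vanishing polar part. The identity $\ploc\circ\iloc = \id_{\galoc}$ is immediate since $\iloc_{ij}(a_{ij})$ is the constant form $a_{ij}$ with vanishing polar contribution to $\pglob$. The crucial cancellation is $\ploc_{ij}\circ\iglob = 0$: the regular-regular part $\iglob(\mu)_{ij}^{++}|_\Eta$ is precisely $\sum_{k\neq i,\, l\neq j} \iota_{w-w_i,\, z-z_j}(\mu_{kl}|_\Eta)$, which is exactly what the correction term subtracts once we use $\pglob(\iglob(\mu)) = \mu$. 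Setting $\eta_{ij} := \omega_{ij} - \iglob(\pglob(\omega))_{ij}$, a direct check yields $\eta_{ij}^{--} = 0$ and
$$(\id - I \circ P)(\omega)_{ij} \;=\; \eta_{ij}^{+-} + \eta_{ij}^{-+} + \eta_{ij}^{++} - \iloc_{ij}\bigl(\eta_{ij}^{++}\big|_\Eta\bigr).$$

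Finally, I would construct $h$ summand by summand by setting $h(\omega)_{ij} := h_{ij}(\eta_{ij})$, where each $h_{ij} : \gN^{ij} \to \gN^{ij}$ is built via integrals along paths in the semisimplicial set $\Flag_\bul(\Rectn)$, in the spirit of the proofs of \cref{prop: discx retract} and \cref{prop: disc retract}. The required identity is
$$[\dd, h_{ij}](x) \;=\; x^{+-} + x^{-+} + x^{++} - \iloc_{ij}\bigl(x^{++}\big|_\Eta\bigr),$$
and its three summands are handled by the same techniques used in the local case: the $(++)$ piece is retracted toward the vertex $\Eta$, while $x^{+-}$ and $x^{-+}$ are contracted toward the subcomplex of flags on which the boundary conditions of $\gN^{ij}$ force them to vanish (the lines $\{w=w_i\}$ and $\{z=z_j\}$, plus the closed point $\{(w_i,z_j)\}$ when $i \neq j$). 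Since $\iglob$ and $\pglob$ are cochain maps, one obtains $[\dd, h](\omega)_{ij} = [\dd, h_{ij}](\eta_{ij})$, which matches the expression for $(\id - I\circ P)(\omega)_{ij}$ above, completing the proof. The main obstacle will be the combinatorially richer flag structure of $\Flag_\bul(\Rectn)$ compared to $\Flag_\bul(\DDpx{{w_i,z_i}})$, which demands more care in choosing the integration paths defining $h_{ij}$; however, because the polar/regular decomposition and the vanishing enforced by boundary conditions are strictly local features at the relevant flags, the local arguments adapt without any essentially new ingredients.
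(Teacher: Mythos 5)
Your proposal is correct and is essentially the paper's own proof: the same $\pglob(\omega)=\sum_{i,j}\omega_{ij}^{--}$, the same corrected local projection (your formula for $\ploc$ is just the unpacked form of the paper's $f_{ij}\bigl(\omega_{ij}-\iglob(\pglob(\omega))_{ij}\bigr)$), the same exact verification of $P\circ I=\id$ via partial fractions and the cancellation $\ploc\circ\iglob=0$, and the same summand-wise homotopy strategy (retract the $++$ part to the vertex $\Eta$, contract the mixed parts onto the boundary flags along $w=w_i$, $z=z_j$ where the boundary conditions force vanishing, with the $i=j$ versus $i\neq j$ case distinction). The only work you leave unwritten is the explicit choice of integration paths defining each $h_{ij}$, which is precisely what the paper's pictures supply; and your bookkeeping of applying $h_{ij}$ to $\eta_{ij}=\omega_{ij}-\iglob(\pglob(\omega))_{ij}$ rather than to $\omega_{ij}$ (legitimate since $\id-\iglob\circ\pglob$ is a cochain map) is in fact the careful way to get $[\dd,h]=\id-I\circ P$ on the nose, mirroring what the paper does explicitly in the proof of \cref{thm: he}.
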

\begin{proof}
We first define the map $\pglob$. 
An element of $\omega \in \galocx$ is a tuple
\be \omega = (\omega_{ij})_{i,j=1}^N ,\qquad  \omega_{ij} \in \gN^{ij}.\nn\ee
Given $\omega_{ij}\in \gN^{ij}$ we have, just in \cref{omegadecomp}, the decomposition
\be \omega_{ij} = \omega_{ij}^{++} + \omega_{ij}^{-+} + \omega_{ij}^{+-} + \omega_{ij}^{--} \nn\ee
coming from the direct sum decomposition of the vector space $\CC((w-w_i))\oxC\CC((z-z_j))$ into the parts polar/regular parts with respect to each of the local coordinates, $w-w_i$ and $z-z_j$.
In particular, the ${}^{--}$ part can be interpreted as a rational function vanishing at infinity, via the embedding of commutative algebras
\be (w-w_i)^{-1} (z-z_j)^{-1} \CC[(w-w_i)^{-1},(z-z_j)^{-1}] \into 
\CC(w)_{\ww}^\8 \oxC \CC(z)_{\zz}^\8.\nn\ee
Making implicit use of these embeddings, we set
\be \pglob(\omega) :=  \sum_{i,j=1}^N \omega_{ij}^{--} \nn\ee
It is then manifest that
\be \pglob \circ \iglob = \id_{\gN}. \nn\ee
Indeed, we have the decomposition of vector spaces
\be \CC(w)_{\ww}^\8 \oxC \CC(z)_{\zz}^\8 \cong_\CC \bigoplus_{i,j}  
(w-w_i)^{-1} (z-z_j)^{-1} \CC[(w-w_i)^{-1},(z-z_j)^{-1}]\nn\ee
coming from taking partial fractions in each global coordinate, $z$ and $w$. In this way 
an element $\mu \in \gN$ has partial fraction decomposition
\be \mu = \sum_{i,j=1}^N \mu_{ij} \nn\ee
where $\mu_{ij}$ is a polynomial differential form on $\ARN$ with coefficents in $(w-w_i)^{-1} (z-z_j)^{-1} \CC[(w-w_i)^{-1},(z-z_j)^{-1}]$. We see that $\iglob(\mu)_{ij}^{--} = \mu_{ij}$ for each $i,j$, and so
\be (\pglob\circ \iglob)(\mu) = \sum_{i,j=1}^N \iglob(\mu)_{ij}^{--} = \sum_{i,j=1^N} \mu_{ij} = \mu. \nn\ee  

Observe that 
\be \pglob \circ \iloc = 0. \nn\ee

Next we define the map $\ploc$. An element of $\galocx$ is a tuple $\omega = (\omega_{ij})_{i,j=1}^N$ as above, and we let
\be \ploc( \omega)_{ij} := f_{ij}( \omega_{ij} - (\iglob(\pglob( \omega)))_{ij}) \nn\ee
where $f_{ij}$ are the dg Lie algebra maps of \cref{lem: big disc retracts}, yielding the tuple
\be \ploc(\omega) = (\ploc(\omega)_{ij})_{i,j=1}^N \in \galoc \nn\ee
Since $\pglob \circ \iloc= 0$ we see that \cref{lem: big disc retracts} then ensures that
\be \ploc \circ \iloc = \id_{\galoc}. \nn\ee

Now we define the homotopy $h: \galocx \to \galocx$. It acts summand by summand, i.e. $h = \sum_{i,j=1}^N h_{ij}$, $h_{ij} : \gN^{ij} \to \gN^{ij}$. 
Our definitions above ensure that 
\be I \circ P (\omega_{ij}) = \iglob\circ \pglob(\omega_{ij}) + \iloc\circ\ploc(\omega_{ij}) = \omega_{ij}^{--} + \omega_{ij}^{++}|_{\Eta}.
\nn\ee
Indeed, $\iglob \circ \ploc$ and $\iloc\circ \pglob$ are trivially zero (the codomains have trivial intersection with the domains). 
The situation is thus similar to that in the proof of \cref{prop: discx retract}, in that we must contract $\omega_{ij}^{\pm\mp}$ and the non-constant part of $\omega_{ij}^{++}$.

Consider first $\omega_{ij}^{++}$. It obeys no nontrivial boundary conditions, and we may simply retract to the vertex $\Eta$,
\be h(\omega_{ij}^{++}) :=
\begin{tikzpicture}[baseline=0]
\tdplotsetmaincoords{0}{180}
\begin{scope}[xshift=-5cm,scale=1.301,tdplot_main_coords,local bounding box=G,decoration={
    markings,
    mark=at position 0.75 with {\arrow{stealth}},
}]
\fourfourpicijkl
\draw[very thick,opacity=.3,postaction={decorate},blue] (p1p1) -- (0wi);
\draw[very thick,opacity=.3,postaction={decorate},blue] (p1p1) -- (0U);
\draw[very thick,opacity=.3,postaction={decorate},blue] (p1p1) -- (uU);
\draw[very thick,opacity=.3,postaction={decorate},blue] (p1p1) -- (uwi);
\draw[very thick,opacity=.3,postaction={decorate},blue] (p1p1) -- (z=u);
\draw[very thick,opacity=.3,postaction={decorate},blue] (p1p1) -- (w=wi);
\draw[very thick,opacity=.3,postaction={decorate},blue] (p1p1) -- (z=0);
\draw[very thick,opacity=.3,postaction={decorate},blue] (p1p1) -- (w=U);
\foreach \r in {0.25,0.5,0.75}
{
\draw[very thick,opacity=.3,postaction={decorate},blue] ($(p1p1)!\r!(0wi)$)  -- ($(z=0)!\r!(0wi)$) ;
\draw[very thick,opacity=.3,postaction={decorate},blue] ($(p1p1)!\r!(0wi)$)  -- ($(w=wi)!\r!(0wi)$) ;
\draw[very thick,opacity=.3,postaction={decorate},blue] ($(uU)!\r!(p1p1)$)  -- ($(uU)!\r!(z=u)$) ;
\draw[very thick,opacity=.3,postaction={decorate},blue] ($(uU)!\r!(p1p1)$)  -- ($(uU)!\r!(w=U)$) ;
\draw[very thick,opacity=.3,postaction={decorate},blue] ($(uwi)!\r!(p1p1)$)  -- ($(uwi)!\r!(z=u)$) ;
\draw[very thick,opacity=.3,postaction={decorate},blue] ($(uwi)!\r!(p1p1)$)  -- ($(uwi)!\r!(w=wi)$) ;
\draw[very thick,opacity=.3,postaction={decorate},blue] ($(0U)!\r!(p1p1)$)  -- ($(0U)!\r!(w=U)$) ;
\draw[very thick,opacity=.3,postaction={decorate},blue] ($(0U)!\r!(p1p1)$)  -- ($(0U)!\r!(z=0)$) ;
}
\end{scope}
\end{tikzpicture}
\nn\ee
Again, in this picture, the squares at the top left, top right, lower left, and lower right are absent in the special cases $i=j$, $k=j$, $i=\ell$ and $k=\ell$ respectively.

Now we consider $\omega_{ij}^{\pm\mp}$. Here we must distinguish the cases $i\neq j$ and $i=j$. 

First suppose $i\neq j$. 
Consider $\omega_{ij}^{-+}$. It vanishes on the edges $\bigl(\pt, (w=w_i)\bigr)$, since $(w-w_i)^{-1} \CC[(w-w_i)^{-1}] \cap \CC[[w-w_i]] = 0$. Similarly, $\omega_{ij}^{+-}$ vanishes on the edges $\bigl( \pt, (z=z_j) \bigr)$. In either case we may define the action of the homotopy in the same way:
\be h(\omega_{ij}^{-+}) :=
\begin{tikzpicture}[baseline=0]
\tdplotsetmaincoords{0}{180}
\begin{scope}[xshift=-5cm,scale=1.301,tdplot_main_coords,local bounding box=G,decoration={
    markings,
    mark=at position 0.75 with {\arrow{stealth}},
}]
\fourfourpicijkl
\draw[ultra thick,dashed] (uU) -- (0U);
\draw[very thick,opacity=.3,postaction={decorate},blue] (p1p1) -- (0wi);
\draw[very thick,opacity=.3,postaction={decorate},blue] (uU) -- (p1p1);
\draw[very thick,opacity=.3,postaction={decorate},blue] (p1p1) -- (w=wi);
\draw[very thick,opacity=.3,postaction={decorate},blue] (p1p1) -- (z=0);
\draw[very thick,opacity=.3,postaction={decorate},blue] (uU) -- (z=u);
\draw[very thick,opacity=.3,postaction={decorate},blue] (z=u) -- (uwi);

\foreach \r in {0.25,0.5,0.75}
{
\draw[very thick,opacity=.3,postaction={decorate},blue] ($(z=u)!\r!(p1p1)$)  -- ($(uwi)!\r!(w=wi)$) ;
\draw[very thick,opacity=.3,postaction={decorate},blue] ($(w=U)!\r!(p1p1)$)  -- ($(0U)!\r!(z=0)$) ;
\draw[very thick,opacity=.3,postaction={decorate},blue] ($(p1p1)!\r!(0wi)$)  -- ($(z=0)!\r!(0wi)$) ;
\draw[very thick,opacity=.3,postaction={decorate},blue] ($(p1p1)!\r!(0wi)$)  -- ($(w=wi)!\r!(0wi)$) ;
\draw[very thick,opacity=.3,postaction={decorate},blue] ($(uU)!\r!(p1p1)$)  -- ($(z=u)!\r!(p1p1)$) ;
\draw[very thick,opacity=.3,postaction={decorate},blue] ($(uU)!\r!(p1p1)$)  -- ($(w=U)!\r!(p1p1)$) ;
}
\end{scope}
\end{tikzpicture}
 h(\omega_{ij}^{+-}) :=
\begin{tikzpicture}[baseline=0]
\tdplotsetmaincoords{0}{180}
\begin{scope}[xshift=-5cm,scale=1.301,tdplot_main_coords,local bounding box=G,decoration={
    markings,
    mark=at position 0.75 with {\arrow{stealth}},
}]
\fourfourpicijkl
\draw[ultra thick,dashed] (uU) -- (uwi);
\draw[very thick,opacity=.3,postaction={decorate},blue] (p1p1) -- (0wi);
\draw[very thick,opacity=.3,postaction={decorate},blue] (uU) -- (p1p1);
\draw[very thick,opacity=.3,postaction={decorate},blue] (p1p1) -- (w=wi);
\draw[very thick,opacity=.3,postaction={decorate},blue] (p1p1) -- (z=0);
\draw[very thick,opacity=.3,postaction={decorate},blue] (uU) -- (w=U);
\draw[very thick,opacity=.3,postaction={decorate},blue] (w=U) -- (0U);
\foreach \r in {0.25,0.5,0.75}
{
\draw[very thick,opacity=.3,postaction={decorate},blue] ($(z=u)!\r!(p1p1)$)  -- ($(uwi)!\r!(w=wi)$) ;
\draw[very thick,opacity=.3,postaction={decorate},blue] ($(w=U)!\r!(p1p1)$)  -- ($(0U)!\r!(z=0)$) ;
\draw[very thick,opacity=.3,postaction={decorate},blue] ($(p1p1)!\r!(0wi)$)  -- ($(z=0)!\r!(0wi)$) ;
\draw[very thick,opacity=.3,postaction={decorate},blue] ($(p1p1)!\r!(0wi)$)  -- ($(w=wi)!\r!(0wi)$) ;
\draw[very thick,opacity=.3,postaction={decorate},blue] ($(uU)!\r!(p1p1)$)  -- ($(z=u)!\r!(p1p1)$) ;
\draw[very thick,opacity=.3,postaction={decorate},blue] ($(uU)!\r!(p1p1)$)  -- ($(w=U)!\r!(p1p1)$) ;
}
\end{scope}
\end{tikzpicture}
\nn\ee

Next suppose $i=j$. Then the top left square in the pictures above is absent, but on the other hand we are guaranteed that the lower left and upper right squares are present, i.e. $i\neq \ell$ and $k\neq j$. We retract back to a boundary where the form vanishes as follows:
\be h(\omega_{ij}^{-+}) :=
\begin{tikzpicture}[baseline=0]
\tdplotsetmaincoords{0}{180}
\begin{scope}[xshift=-5cm,scale=1.3,tdplot_main_coords,local bounding box=G,decoration={
    markings,
    mark=at position 0.75 with {\arrow{stealth}},
}]
\threefourpicikl
\draw[ultra thick,dashed] (w=U) -- (0U);
\draw[very thick,opacity=.3,postaction={decorate},blue] (p1p1) -- (uwi);
\foreach \r in {0,0.25,0.5,0.75,1}
{
\draw[very thick,opacity=.3,postaction={decorate},blue] ($(uwi)!\r!(p1p1)$)  -- ($(uwi)!\r!(z=u)$) ;
\draw[very thick,opacity=.3,postaction={decorate},blue] ($(uwi)!\r!(p1p1)$)  -- ($(uwi)!\r!(w=wi)$) ;
\draw[very thick,opacity=.3,postaction={decorate},blue] ($(w=U)!\r!(0U)$)  -- ($(p1p1)!\r!(z=0)$) ;
\draw[very thick,opacity=.3,postaction={decorate},blue] ($(p1p1)!\r!(z=0)$)  -- ($(w=wi)!\r!(0wi)$) ;
}
\end{scope}
\end{tikzpicture}
h(\omega_{ij}^{+-}) :=
\begin{tikzpicture}[baseline=0]
\tdplotsetmaincoords{0}{180}
\begin{scope}[xshift=-5cm,scale=1.3,tdplot_main_coords,local bounding box=G,decoration={
    markings,
    mark=at position 0.75 with {\arrow{stealth}},
}]
\threefourpicikl
\draw[ultra thick,dashed] (z=u) -- (uwi);
\draw[very thick,opacity=.3,postaction={decorate},blue] (p1p1) -- (0U);
\foreach \r in {0,0.25,0.5,0.75,1}
{
\draw[very thick,opacity=.3,postaction={decorate},blue] ($(0U)!\r!(p1p1)$)  -- ($(0U)!\r!(w=U)$) ;
\draw[very thick,opacity=.3,postaction={decorate},blue] ($(0U)!\r!(p1p1)$)  -- ($(0U)!\r!(z=0)$) ;
\draw[very thick,opacity=.3,postaction={decorate},blue] ($(z=u)!\r!(uwi)$)  -- ($(p1p1)!\r!(w=wi)$) ;
\draw[very thick,opacity=.3,postaction={decorate},blue] ($(p1p1)!\r!(w=wi)$)  -- ($(z=0)!\r!(0wi)$) ;
}
\end{scope}
\end{tikzpicture}
\nn\ee
Finally, we set of course $h(\omega_{ij}^{--}) = 0$. 

With these definitions, we obtain
\be \omega_{ij} - I\circ P(\omega_{ij}) = [\dd,h](\omega_{ij}) \nn\ee
for each $i,j$, as required.
This completes the construction of the homotopy $h: \galocx \to \galocx$, and hence the proof of \cref{thm: global retract}.
\end{proof}

\begin{cor}\label{cor: global bimod}
There is a deformation retract of $(U(\gN),U(\galoc))$-bimodules
\be 
\begin{tikzcd} 
U(\gN) \ox U(\galoc) \rar[shift left]{U(I)}  & \lar[shift left]{U(P)} U(\galocx) \ar[loop right,"\wt h"] 
\end{tikzcd}\nn\ee
\end{cor}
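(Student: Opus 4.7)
The plan is to mimic the proof of \cref{cor: disc bimod}, starting from the strong deformation retract established in \cref{thm: global retract}. First, I would use the projector $I \circ P : \galocx \to \galocx$ to obtain a direct sum decomposition of dg vector spaces $\galocx \cong \gN \oplus \g^\perp \oplus \galoc$, where $\g^\perp := (\id - I \circ P)(\galocx)$, and where $\iglob(\gN)$ and $\iloc(\galoc)$ are embedded as dg Lie subalgebras. The PBW-type \cref{lem: pbw} then gives an isomorphism of $(U(\gN), U(\galoc))$-bimodules
\[ U(\galocx) \cong U(\gN) \ox \Sym(\g^\perp) \ox U(\galoc),\]
so the task reduces to transferring the homotopy $h$ on $\galocx$ to a homotopy on this bimodule.

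To do so, one first needs $h$ to satisfy the side conditions $h \circ I = 0$ and $P \circ h = 0$, which together imply that $h$ restricts to a self-map $\g^\perp \to \g^\perp$. If the specific homotopy produced in the proof of \cref{thm: global retract} does not satisfy these on the nose, I would replace it by $h' := (\id - I \circ P) \circ h \circ (\id - I \circ P)$, which (using that $I \circ P$ is a cochain map) still satisfies $[\dd, h'] = \id - I \circ P$ and now also satisfies both side conditions. With this in hand, the restriction $h|_{\g^\perp}$ is a degree $-1$ map obeying $[\dd, h] = \id$, which extends uniquely to a derivation of the free dg commutative algebra $\Sym(\g^\perp)$. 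The extended derivation satisfies $[\dd, h]|_{\Sym^n(\g^\perp)} = n \cdot \id$ for each $n \geq 0$, exactly as in the proof of \cref{cor: disc bimod}. I would then define
\[ \wt h := \id \ox \Big(\,\sum_{n \geq 1} \tfrac{1}{n}\, h|_{\Sym^n(\g^\perp)}\,\Big) \ox \id \]
on $U(\gN) \ox \Sym(\g^\perp) \ox U(\galoc)$. This is manifestly a map of $(U(\gN), U(\galoc))$-bimodules, and a direct computation (identical to the local case) gives $[\dd, \wt h] = \id - U(I) \circ U(P)$, together with $U(P) \circ U(I) = \id$, completing the proof.

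The main obstacle is the verification, or arrangement, of the side conditions. The homotopy given in the proof of \cref{thm: global retract} is defined summand by summand on $\galocx = \bigoplus_{i,j} \gN^{ij}$ and, while it vanishes on the $\omega_{ij}^{--}$ components by construction, it does not a priori vanish on $\iglob(\mu)_{ij}$ when $\mu \in \gN$ is a non-constant polynomial differential form on $\Flag_\bul(\Rectn)$, because the local homotopies still act on the ${}^{++}$, ${}^{+-}$ and ${}^{-+}$ components of $\iglob(\mu)_{ij}$. The standard modification $h \mapsto (\id - IP)\, h\, (\id - IP)$ resolves this cleanly and without disturbing the crucial homotopy identity. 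Beyond this step, the argument is purely formal, mirroring the treatment in \cref{cor: disc bimod} (and the corresponding discussion in \cite{GwilliamThesis}).
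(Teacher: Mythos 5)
Your proposal is correct and takes essentially the same route as the paper, whose proof of \cref{cor: global bimod} is precisely to repeat the argument of \cref{cor: disc bimod} starting from the deformation retract of \cref{thm: global retract}: projector $I\circ P$, decomposition $\galocx \cong \gN \oplus \g^\perp \oplus \galoc$, \cref{lem: pbw}, and transfer of $h$ to $\Sym(\g^\perp)$ with the $\tfrac1n$ rescaling. Your additional step of enforcing the side conditions by passing to $h' = (\id - I\circ P)\circ h \circ(\id - I\circ P)$ is a sensible safeguard which the paper leaves implicit (the global homotopy does not annihilate $\iglob(\gN)$ on the nose, though it does map $\g^\perp$ to itself, which is all the argument really needs), and it does not change the substance of the proof.
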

\begin{proof}
This follows from \cref{thm: global retract} in the same way that \cref{cor: disc bimod} followed from \cref{prop: discx retract}.
\end{proof}

\appendix
\section{Proof of \cref{thm: flasque res}}\label{sec: flasque resolution proof}
We must show $\C^\bul(\sca'\AAA)$ is a flasque resolution of $\O$. As we noted before the statement of \cref{thm: flasque res},  $\C^\bul(\sca'\AAA)$ is certainly flasque. What remains is to check that the stalks of $\C^\bul(\sca'\AAA)$ resolve the stalks of $\O$. 

Let us consider the stalks at a point $p\in \Rect$. It is enough to show that, given any open $U\ni p$, there exists an open $V$ with $p\in V \subset U$ such that $\Gamma(V,\C^\bul(\sca'\AAA) = \C^\bul(\sca'\AAA_V)$ is a resolution of $\O(V)$. 

We may suppose $V$ is of the form
\be V = \Rect \setminus \bigcup_{i=1}^m \ol{\{(w=a_i)\}} \setminus \bigcup_{j=1}^m \ol{\{(z=b_j)\}}\nn\ee
i.e. that it has no isolated missing points, only missing lines. 

(Indeed, every open neighbourhood of $p$ is of the form $U=\Rect \setminus \bigcup_{i=1}^m \ol{\{(w=a'_i)\}} \setminus \bigcup_{j=1}^n \ol{\{(z=b'_j)\}} \setminus \bigcup_{k=1}^p\{ (c'_k,d'_k) \}$, as in \cref{Udef}. Suppose $p$ is a closed point $(a,b)\in \CC\times\CC$. Then given such a $U$ we may take $V = \Rect \setminus \bigcup_{i=1}^n \ol{\{(w=a'_i)\}} \setminus \bigcup_{j=1}^m \ol{\{(z=b'_j)\}} \setminus \bigcup_{\substack{k=1\\ c'_k\neq a}}^p \ol{\{ (w=c'_k) \}} \setminus \bigcup_{\substack{k=1\\ d'_k \neq b}}^p \ol{\{ (z=d'_k)\}}$, which is an open subset of $U$, containing the point $(a,b)$, and of the form we wanted. If instead $p$ is a line $(w=a)$ or $(z=b)$, or the generic point $\Eta$, the argument is similar.)

Let us regard the space 
\be \O(V) = \CC(w)_{a_1,\dots,a_m}\oxC\CC(z)_{b_1,\dots,b_n} \nn\ee
of sections of $\O$ over $V$ as a complex concentrated in degree zero:
\be 0 \to \O(V) \to 0 \nn\ee
It is enough to show that $\C^\bul(\sca'\AAA_V)$ is homotopy equivalent to this complex.

To do that, we shall use the fact that the associated complex  $\C^\bul(\sca'\AAA_V)$ and the Thom-Whitney complex $\Th^\bul(\sca'\AAA_V)$ are homotopy equivalent, as we recalled in \cref{sec: tw functor}. We shall show that $\O(V)$ is a deformation retract of $\Th^\bul(\sca'\AAA_V)$:
\be\begin{tikzcd} 
\bigl(0 \to \O(V) \to 0 \bigr) \rar[shift left]{i}  & \lar[shift left]{p} \Th^\bul(\sca'\AAA_V)  \ar[loop right,"h"] 
\end{tikzcd}\label{OVretract}\ee
The argument below is similar to that in the proofs of \cref{thm: he} and \cref{thm: global retract}. The only new ingredient conceptually is that the semisimplicial set $\Flag_\bul(V)$ now consists of \emph{uncountably many} copies of $\Flag_\bul(\DD)$, one for each closed point $(c,d)\in V$, appropriately sewn together along edges of the form $(line) \subset \Eta$. The restricted products $\sca'$ are needed to keep the sums finite in the definition of the homotopy $h$.

A cochain $\omega\in \Th^\bul(\sca'\AAA_V)$ is a polynomial differential form on $\Flag_\bul(V)$, valued in $\CC(w)\oxC \CC(z)$ and subject to certain boundary conditions: namely for each $(c,d)\in V$, $\AAA_V$ restricts on the embedded copy of $\Flag_\bul(\DDp{{c,d}})$ to the  $\Flag_\bul(\DDp{{c,d}})$-algebra
\be
\begin{tikzpicture}[baseline =100]
\tdplotsetmaincoords{0}{180}
\begin{scope}[xshift=0,scale=7.2,tdplot_main_coords,local bounding box=G]
\coordinate (w=U)  at (1,0,0)  ;  
\coordinate (z=u) at (0,-1,0)  ;  
\coordinate (uU) at (1,-1,0)  ;  
\coordinate (p1p1) at (0,0,0) ;   
\coordinate ({w=U }{p1p1}) at (barycentric cs:p1p1=1,{w=U}=1);
\coordinate ({z=u }{p1p1}) at (barycentric cs:p1p1=1,{z=u}=1);
\coordinate ({uU  }{p1p1}) at (barycentric cs:p1p1=1,{uU}=1);
\coordinate ({uU  }{z=u }{p1p1}) at (barycentric cs:p1p1=1,{z=u}=1.3,{uU}=1);
\coordinate ({uU  }{w=U }{p1p1}) at (barycentric cs:p1p1=1,{w=U}=1.3,{uU}=1);
\coordinate ({uU  }{z=u }) at (barycentric cs:{z=u}=1,{uU}=1);
\coordinate ({uU  }{w=U }) at (barycentric cs:{w=U}=1,{uU}=1);

\node (NuU)  at (uU) {$S_c^{-1}\CC[w]\oxC S_d^{-1}\CC[z]$};
\node (Nw=U)  at(w=U) {$S_c^{-1}\CC[w]\oxC\CC(z)$};
\node (Nz=u) at (z=u) {$\CC(w)\oxC S_d^{-1}\CC[z]$};
\node[rotate=0] (Np1p1) at (p1p1) {$\CC(w)\oxC\CC(z)$};
\node[rotate=-45] (N{uU  }{p1p1}) at ({uU  }{p1p1}) {$\CC(w)\oxC\CC(z)$};
\node (N{w=U }{p1p1}) at ({w=U }{p1p1}) {$\CC(w)\oxC\CC(z)$};
\node (N{z=u }{p1p1}) at ({z=u }{p1p1}) {$\CC(w)\oxC\CC(z)$};
\node (N{uU  }{w=U }) at ({uU  }{w=U }) {$S_c^{-1}\CC[w]\oxC \CC(z) $};
\node (N{uU  }{z=u }) at ({uU  }{z=u }) {$\CC(w)\oxC S_d^{-1}\CC[z] $};
\node (N{uU  }{w=U }{p1p1}) at ({uU  }{w=U }{p1p1}) {$\CC(w)\oxC\CC(z)$};
\node (N{uU  }{z=u }{p1p1}) at ({uU  }{z=u }{p1p1}) {$\CC(w)\oxC\CC(z)$};
\begin{scope}[every node/.style={midway,circle,inner sep = 0,draw,fill=white}]
\draw[->] (Np1p1) -- (N{z=u }{p1p1}); \draw[->] (Np1p1) -- (N{w=U }{p1p1}); 
\draw[->] (Nz=u) --(N{z=u }{p1p1}); \draw[->] (Nw=U) -- (N{w=U }{p1p1}); 
\draw[->] (Nz=u) --(N{uU  }{z=u }); \draw[->] (Nw=U) -- (N{uU  }{w=U }); 
\draw[->] (N{uU  }{w=U }) -- (N{uU  }{w=U }{p1p1});   
\draw[->] (N{uU  }{z=u }) -- (N{uU  }{z=u }{p1p1});   
\draw[->] (NuU) -- (N{uU  }{w=U });
\draw[->] (NuU) -- (N{uU  }{z=u });
\draw[->] (NuU) -- (N{uU  }{p1p1});
\draw[->] (Np1p1) -- (N{uU  }{p1p1});
\draw[->] (N{w=U }{p1p1}) -- (N{uU  }{w=U }{p1p1});   
\draw[->] (N{z=u }{p1p1}) -- (N{uU  }{z=u }{p1p1});   
\draw[->] (N{uU  }{p1p1}) -- (N{uU  }{z=u }{p1p1});   
\draw[->] (N{uU  }{p1p1}) -- (N{uU  }{w=U }{p1p1});   
\end{scope}
\end{scope}
\end{tikzpicture}
\label{AAdef}\ee
Every element of $\CC(w) \oxC \CC(z)$ has a partial fraction decomposition, in $w$ and then $z$, which a sum of finitely many terms. Thus we have the direct sum decomposition of vector spaces
\begin{align} \CC(w) \oxC \CC(z) &\cong_\CC \O(V) \oplus \bigoplus_{c\notin\{a_1,\dots,a_m\}} (w-c)^{-1} \CC[(w-c)^{-1}] \oxC \CC(z)\nn\\&\qquad\qquad 
\oplus \bigoplus_{d\notin\{b_1,\dots,b_n\}} \CC(w)_{a_1,\dots,a_m} \ox (z-d)^{-1} \CC[(z-d)^{-1}]\nn\end{align}
In this way, our polynomial differential form $\omega$ decomposes uniquely as a sum 
\be \omega = \omega_{\O(V)} + \sum_{c\notin\{a_1,\dots,a_m\}}\omega_c + \sum_{d\notin\{b_1,\dots,b_n\}} \omega'_d.\nn\ee
Here, it must be the case that after pulling $\omega$ back to any individual simplex $S$ of $\Flag_\bul(V)$ only finitely many summands are nonzero, so that $\omega|_S$ correctly takes values in $\CC(w) \oxC \CC(z)$. \emph{Which} summands contribute, though, can depend on which of the uncountably many simplices one considers, so it need not be true that only finitely many summands are nonzero full stop. 

We define the homotopy $h$ to act diagonally with respect to this decomposition, 
\be h(\omega) := h_{\O(V)}(\omega_{O(V)}) + \sum_{c\notin\{a_1,\dots,a_m\}} h_c(\omega_c) + \sum_{d\notin\{b_1,\dots,b_n\}} h'_d(\omega'_d),\nn\ee 
as follows. The summand $\omega_c$ must vanish when pulled back to the boundaries $(\pt,(w=c))$, and, in the same notation we used in the main text above, we define $h_c(\omega_c)$ to be a retract back to this boundary:
\be h_c(\omega_c) :=
\begin{tikzpicture}[baseline=0]
\tdplotsetmaincoords{0}{180}
\begin{scope}[xshift=-5cm,scale=1.3001,tdplot_main_coords,local bounding box=G,decoration={
    markings,
    mark=at position 0.75 with {\arrow{stealth}},
}]
\fourfourpiccdef
\draw[ultra thick,dashed] (w=U) -- (uU);
\draw[ultra thick,dashed] (w=U) -- (0U);
\foreach \r in {0.125,0.25,.375,0.5,.625,0.75,.875}
{
\draw[very thick,opacity=.3,postaction={decorate},blue] ($(uU)!\r!(0U)$)  -- ($(uwi)!\r!(0wi)$) ;
}
\end{scope}
\end{tikzpicture}
\nn\ee
for every $d,f\notin\{b_1,\dots,b_m\}$ and $e\notin\{a_1,\dots,a_m,c\}$.
What has to be checked is that for any simplex $S$, the resulting sum $ \sum_{c\notin\{a_1,\dots,a_m\}} h_c(\omega_c)|_S$ has only finitely many non-zero terms. This is what the conditions in the definition \cref{adelic products} of $\sca'$ ensure. For example, consider a flag $((w=e),\Eta)$. By our definition $h(\omega)|_{((w=e),\Eta)}$ receives contributions from \emph{all} flags $((w=c),\Eta)$ in this sum. But for all but finitely many such flags, the pull-back $\omega|_{((w=c),\Eta)}$ actually takes values not just in $\AAA((w=c),\Eta) = \CC(w)\oxC\CC(z)$ but in $\AA((w=c)) = S_c^{-1} \CC[w] \oxC \CC(z)$, which means that the pullback of the summand $\omega_c$ to this flag actually vanishes. Similarly, consider a flag $((e,d),(w=e), \Eta)$. By definition $h(\omega)|_{(e,d),(w=e),\Eta}$ receives a contribution all flags $((c,d),(w=c),\Eta)$ and $((c,d),(z=d),\Eta)$ for $c\notin\{b_1,\dots,b_m,e\}$, but at most finitely many of these contributions are actually nonzero.   

We define $h_d'(\omega_d')$ similarly to be given by retracting to the boundary of $\Flag_\bul(V)$ defined by $(z=d)$. 

We define $h_{\O(V)}$ to be the retraction to the vertex $\Eta$: for every $c,d\notin\{a_1,\dots,a_m\}$ and $e,f\notin\{b_1,\dot,b_n\}$, 
\be h_{\O(V)}(\omega_{\O(V)}) :=
\begin{tikzpicture}[baseline=0]
\tdplotsetmaincoords{0}{180}
\begin{scope}[xshift=-5cm,scale=1.301,tdplot_main_coords,local bounding box=G,decoration={
    markings,
    mark=at position 0.75 with {\arrow{stealth}},
}]
\fourfourpiccdef
\draw[very thick,opacity=.3,postaction={decorate},blue] (p1p1) -- (0wi);
\draw[very thick,opacity=.3,postaction={decorate},blue] (p1p1) -- (0U);
\draw[very thick,opacity=.3,postaction={decorate},blue] (p1p1) -- (uU);
\draw[very thick,opacity=.3,postaction={decorate},blue] (p1p1) -- (uwi);
\draw[very thick,opacity=.3,postaction={decorate},blue] (p1p1) -- (z=u);
\draw[very thick,opacity=.3,postaction={decorate},blue] (p1p1) -- (w=wi);
\draw[very thick,opacity=.3,postaction={decorate},blue] (p1p1) -- (z=0);
\draw[very thick,opacity=.3,postaction={decorate},blue] (p1p1) -- (w=U);
\foreach \r in {0.25,0.5,0.75}
{
\draw[very thick,opacity=.3,postaction={decorate},blue] ($(p1p1)!\r!(0wi)$)  -- ($(z=0)!\r!(0wi)$) ;
\draw[very thick,opacity=.3,postaction={decorate},blue] ($(p1p1)!\r!(0wi)$)  -- ($(w=wi)!\r!(0wi)$) ;
\draw[very thick,opacity=.3,postaction={decorate},blue] ($(uU)!\r!(p1p1)$)  -- ($(uU)!\r!(z=u)$) ;
\draw[very thick,opacity=.3,postaction={decorate},blue] ($(uU)!\r!(p1p1)$)  -- ($(uU)!\r!(w=U)$) ;
\draw[very thick,opacity=.3,postaction={decorate},blue] ($(uwi)!\r!(p1p1)$)  -- ($(uwi)!\r!(z=u)$) ;
\draw[very thick,opacity=.3,postaction={decorate},blue] ($(uwi)!\r!(p1p1)$)  -- ($(uwi)!\r!(w=wi)$) ;
\draw[very thick,opacity=.3,postaction={decorate},blue] ($(0U)!\r!(p1p1)$)  -- ($(0U)!\r!(w=U)$) ;
\draw[very thick,opacity=.3,postaction={decorate},blue] ($(0U)!\r!(p1p1)$)  -- ($(0U)!\r!(z=0)$) ;
}
\end{scope}
\end{tikzpicture}
\nn\ee
In this case there are no potentially infinite sums: for example, $h_{\O(V)}(\omega_{\O(V)})|_{((c,d),(w=c),\Eta)}$ receives contributions only from $\omega_{\O(V)}|_{((c,d),(w=c),\Eta)}$ and $\omega_{\O(V)}|_{((c,d),\Eta)}$. 

We let $i$ be the map embedding an element of $\O(V)$ as a constant $0$-form on $\Flag(V)$. (Observe that it obeys all the boundary conditions). We let $p$ be the map which picks out the component in $\O(V)$ of the pull-back of a form to the vertex $\Eta$:
\be p(\omega) := \omega_{\O(V)}|_{\Eta} \nn\ee

With these definitions, one checks that \cref{OVretract} is a deformation retract of dg vector spaces, as we wanted to show. This completes the proof of \cref{thm: flasque res}.

\printbibliography
\end{document}
